\newcommand{\ls}{\leqslant}
\newcommand{\gs}{\geqslant}
\newcommand{\R}{\mathbb R}
\newcommand{\Rn}{\mathbb R^n}
\newcommand{\tr}{\operatorname{Tr}}
\newcommand{\per}{\operatorname{Per}}
\newcommand{\Om}{\Omega}
\newcommand{\scal}[2]{\left ( #1 \, , \, #2 \right )}
\newcommand{\scalv}[2]{\left \langle #1 \, , \, #2 \right \rangle}
\renewcommand{\div}{\operatorname{div}}
\newcommand{\reg}{\operatorname{reg}}
\newcommand{\sing}{\operatorname{sing}}
\newcommand{\spt}{\operatorname{spt}}
\renewcommand{\H}{\mathcal H}
\newcommand{\es}{E_s^{(1)}}
\newcommand{\et}{E_t^{(1)}}
\newcommand{\llo}{L^1_{\mathrm{loc}}}
\newcommand{\Cu}{\mathcal C^1}
\newcommand{\dist}{\operatorname{dist}}
\newcommand{\dpa}[2]{\frac{\partial #1}{\partial x_{#2}}}
\newcommand{\dpp}[3]{\frac{\partial^2 #1}{\partial x_{#2} \partial x_{#3}}}
\newcommand{\dpc}[2]{\frac{\partial^2 #1}{\partial x_{#2}^2}}
\newcommand{\eps}{\varepsilon}
\newtheorem{thm}{Theorem}
\newtheorem{defi}{Definition}
\DeclareMathOperator*{\argmin}{arg\,min}
\DeclareMathOperator*{\esup}{ess\,sup}
\newtheorem{prop}{Proposition}
\newtheorem{cor}{Corollary}
\newtheorem{lem}{Lemma}
\theoremstyle{remark}
\newtheorem*{rem}{Remark}
\title{Continuity results for $TV$-minimizers}
\author{Gwenael Mercier\footnote{Johann Radon Institute for Computational and Applied Mathematics (RICAM), Austrian Academy of Sciences, Altenbergerstra\ss{}e 69, A-4040 Linz, Austria. \texttt{gwenael.mercier@ricam.oeaw.ac.at}}}
\date{}
\begin{document}
\maketitle
\begin{abstract}This paper deals with continuity preservation when minimizing generalized total variation with a $L^2$ fidelity term or a Dirichlet boundary condition. We extend several recent results for these two types of data terms, mainly by showing comparison principles for the prescribed mean curvature problem satisfied by the level-sets of such minimizers. \end{abstract}

\vskip.1truecm \noindent {\bf MSC2010}: 49N60, 53A10, 94A08. 

 \section{Introduction}
In this paper, we study the regularity of minimizers of generalized total variations. More precisely, let $\Omega$ be a subset of $\Rn$ and $g$ be a function which is defined on some subset $\tilde \Omega$ of $\overline \Omega.$ We want to analyze the regularity of minimizers of
\begin{equation} \min_{u \in BV} \int_\Omega F(\nabla u ) \label{genrof}\end{equation}
where $F : \Rn \to \R$ is a convex function with linear growth ($\frac{1}{\mu} |x| \ls F(x) \ls \mu |x|$) and with two possible links between $u$ and $g$:
\begin{enumerate}
 \item Either a Dirichlet condition $u=g$ on $\partial \Omega$, which can be motivated by Current Density Imaging \cite{NacTamTim11} (even if that would require a space dependent anisotropy) or Mechanics \cite{Ray93},
\item Or a $L^2$-distance between $u$ and $g$
$$d(u,g) = \int_\Omega \frac{(u-g)^2}{2}, $$ which is the distance introduced by Rudin, Osher and Fatemi in \cite{rudin92} in their well known denoising model. 

\end{enumerate}
Regularity results for minimizers of \eqref{genrof} constitute a wide literature. The pioneer work of Miranda \cite{miranda65} has been generalized by Bousquet and Clarke (see \cite{clarke05,BouCla07,bousquet07,bousquet15}) whereas Bildhauer \cite{Bil00} (and previously Seregin \cite{Ser90}) study this minimization problem taking advantage of its dual formulation (see also \cite{Ser96} for a physical interpretation of the dual variable and \cite{beck13} for a link between these two approaches).

Using the theory of $BV$ functions and finite perimeter sets, Sternberg, Williams and Ziemer use in \cite{SteWilZie92,SteWilZie93} the coarea formula to link the minimizing property of $u$ in \eqref{genrof}, for $F$ the Eucliden norm, and a geometrical problem of minimal surface type satisfied by its level-sets $\{u > t\}.$ This allows them to use all the techniques from Geometric Measure Theory, and more precisely minimal currents, in particular the then-recently proven comparison principle between minimal surfaces \cite{simon87} (see also \cite{moschen77} and \cite{solomon89,Ilm96} when surfaces are only critical points for area) to show a first continuity result for total variation minimization. In this paper, we make an intensive use of this geometrical problem (but in an anisotropic framework) in the spirit of the papers by Caselles, Chambolle, Novaga \cite{chambolle07,chambolle11} (see also \cite{jalalzai12,Jal13,Jal14,Jal16}) for the $L^2$ distance, and \cite{jerrard13,NacTamTim11} for the Dirichlet condition (see also \cite{MorNacTam14}). See also the references therein for more work on this subject.

In the following, we try to give a rather self contained presentation and we will recall some results that are already part of literature. Nonetheless, we formulate them in the simplest form that fits our needs. It has to be noticed that we will use a little of anisotropic geometry, that has been introduced and developed in recent years, see \cite{AmaBel94,Bel04} for details.

\vspace{0.5cm}
Let us now give a brief overview of this article. In what follows, we will be interested in three types of $F$:
\begin{enumerate}
\item $F(\nabla u ) = |\nabla u|,$ that is the usual total variation,
\item $F(\nabla u) = \varphi(\nabla u)$, where $\varphi$ is a norm in $\Rn$, which can be non Euclidean,
\item $ F(\nabla u) = f(\varphi(\nabla u))$, where $f : \R^+ \to \R^+$ is a convex function and $\varphi$ is a norm in $\Rn$.
\end{enumerate}
The current framework will be recalled in every section.

All along this paper, our goal is to link the regularity of $u$ with the regularity of $g$. More precisely, we want to show that the minimizing procedure \eqref{genrof} preserves continuity. One can even show (see Section \ref{secuc}) that under strong assumptions on the domain, we can control the modulus of continuity of $u$ by the modulus of $g$. 

Let us present now the structure of this article.
\begin{itemize}
 \item In a first section, we present briefly the work by Miranda \cite{miranda65}, which shows typical behavior of minimizers of \eqref{genrof} and introduce $BV$-functions and sets with finite perimeter. In particular, we define the quantity $F(\nabla u)$ for $BV$ functions and we formulate the link between minimizers of \eqref{genrof} and geometric minimizers of
\begin{equation} \per_\varphi(E;\Omega) - \int_{E\cap \Omega} g \label{formgeom}, \end{equation}
which is the variational formulation of ``$E$ has a prescribed mean curvature $g$.''
We also give some density properties of these geometrical minimizers.
We finally recall the known regularity results on $u$ which deal with its jump set (hypersurfaces of discontinuity).
\item In Section \ref{secuc}, we apply Miranda's scheme to study minimizers of 
$$\int_\Omega f(\varphi(\nabla u)) + \frac{(u-g)^2}{2}$$ with Neumann boundary conditions in a convex domain, and we show that the control of the modulus of continuity of $u$ on the boundary can be obtained using these boundary conditions. We can then obtain a bound on the modulus on the whole domain, which constitutes an extension of a result by Caselles, Chambolle and Novaga \cite{chambolle11} to higher dimension and anisotropic framework.
\end{itemize}

In the sections which follow, we use level-sets $E_s = \{u>s\}$ and their minimizing property to get regularity results for $u$. Indeed, showing that $u$ is continuous is equivalent to show that $\partial E_s \cap \partial E_t = \emptyset$ as soon as $s \neq t$.

\begin{itemize}
 \item We first show in Section \ref{onesmooth} that one can quite easily extend the usual Hopf maximum principle for $\mathcal C^2$ geometric minimizers of \eqref{formgeom} to the case where only one of the two minimizers is regular. This result is known for $g=0$ \cite{solomon89} but we give a much simpler proof in the spirit of Caffarelli, Cordoba, Roquejoffre, Savin \cite{caffarelli93,caffarelli10}.
\item In Section \ref{secjer}, we investigate the problem $$\min \int_\Omega \varphi(\nabla u)$$ in bounded domains with continuous Dirichlet boundary conditions. We could use the scheme of Miranda, but since the functional is no longer strictly convex, we have to find another way to get a comparison principle for minimizers ($u \ls v$ on $\partial \Omega$ implies $u \ls v$ in the whole $\Omega$): Jerrard, Moradifam and Nachman proposed a geometric proof of this principle in \cite{jerrard13} (inspired by \cite{SteWilZie92}), with an strict $\varphi$-mean convexity assumption on the domain $\Omega$. Since \cite{jerrard13} deals with a space dependent $\varphi$, they can obtain continuity of the minimizer only in dimension $\ls 3,$ when the level-sets of the minimizer are regular. Taking advantage of the translation invariance, we prove continuity for $u$ in all dimensions, using simpler arguments than in \cite{jerrard13}. Nonetheless, the proof is totally geometric (it deals with level-sets) and remains in the spirit of \cite{jerrard13} and \cite{SteWilZie92}.
\item Finally, in Section \ref{seclocal}, we come back to the usual Rudin-Osher-Fatemi model (no anisotropy). We show that some results can be stated in a generic open set but that the situation is more difficult, because we cannot use the boundary as a step towards continuity. As a result, we show that a strong maximum principle for minimal surfaces \cite{simon87} can be extended to variational constant mean curvature hypersurfaces, and see that it is enough to claim that two different level-sets of a minimizer cannot touch. That is exactly proving that the minimizer $u$ is continuous.
\end{itemize}
 \section{Tools and related results}
 \label{defiso}
\subsection{The pioneer work by miranda}
We present briefly here one of the first papers on minimizing
\begin{equation} \int_\Omega F(\nabla u), \quad u=g \text{ on } \partial \Omega, \label{eqmir}\end{equation}
which was published in Italian by Miranda \cite{miranda65}. Miranda assumes here that $F$ is $\mathcal C^2$ and strictly convex and that the domain $\Omega$ is open and bounded. In addition, the boundary data $g$ satisfies the so called $K$-bounded slope condition (BSC): for every $ \hat y \in \partial \Omega$, there exist two affine functions $f^\pm$, vanishing at $\hat y$, such that for every other $y' \in \partial \Omega$, we have
\begin{equation} f^-(y') \ls g(y')-g(\hat y) \ls f^+ (y').\label{eqbsc} \end{equation}
The main statement of \cite{miranda65} is
\begin{thm} \label{thmmiranda65}
 There exists a unique minimizer of \eqref{eqjer} in the class of Lipschitz functions.
\end{thm}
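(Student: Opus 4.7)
The plan is to follow the classical Hilbert--Haar--Stampacchia scheme: establish a uniform Lipschitz estimate for any minimizer via the affine barriers provided by the BSC, then deduce existence by the direct method in $W^{1,\infty}$. Uniqueness is immediate from strict convexity of $F$: if $u_1, u_2$ both minimize, then $(u_1+u_2)/2$ is admissible and strict convexity forces $\nabla u_1 = \nabla u_2$ a.e., which combined with $u_1 = u_2$ on $\partial \Omega$ gives $u_1 = u_2$.

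The heart of the argument is a one-sided comparison against affine functions: if $\ell$ is affine with $\ell \gs g$ on $\partial \Omega$, then any minimizer $u$ satisfies $u \ls \ell$ in $\Omega$. Being $\mathcal{C}^2$ with constant gradient, $\ell$ solves the Euler--Lagrange equation $\div(\nabla F(\nabla u))=0$ classically and hence minimizes $\int F(\nabla \cdot)$ for its own boundary data on every subdomain. Since $\{\nabla \min(u,\ell), \nabla \max(u,\ell)\} = \{\nabla u, \nabla \ell\}$ as an unordered pair almost everywhere, one has $\int_\Omega F(\nabla \min) + \int_\Omega F(\nabla \max) = \int_\Omega F(\nabla u) + \int_\Omega F(\nabla \ell)$; minimality of $u$ and of $\ell$ against their respective boundary data (noting $\min(u,\ell)|_{\partial\Omega} = u|_{\partial \Omega}$ because $\ell \gs u$ there, and similarly for $\max$) forces both $\min(u,\ell)$ and $\max(u,\ell)$ to be minimizers, so strict convexity yields $\min(u,\ell) = u$, i.e.\ $u \ls \ell$. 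Applying this at each $\hat y \in \partial \Omega$ with the BSC barriers $\ell^\pm(y) := g(\hat y) + f^\pm(y-\hat y)$ produces the boundary estimate $|u(y) - u(\hat y)| \ls K|y-\hat y|$ for every $y \in \overline \Omega$, with $K$ the uniform BSC constant.

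The interior estimate comes from Haar's translation trick: for small $h \in \Rn$, $v(x) := u(x+h)$ minimizes the same functional on $\Omega_h := \Omega \cap (\Omega - h)$, and each point of $\partial \Omega_h$ lies on $\partial \Omega$ or on $\partial(\Omega - h)$, so the boundary estimate gives $|v - u| \ls K|h|$ on $\partial \Omega_h$. A second application of the comparison principle---now between $v$ and the two minimizers $u \pm K|h|$---propagates the bound to all of $\Omega_h$, so $u$ is globally $K$-Lipschitz. Existence then follows by the direct method in the nonempty convex set $\mathcal{L}_K := \{v \in W^{1,\infty}(\Omega) : v|_{\partial\Omega} = g, \ \|\nabla v\|_\infty \ls K\}$, using Arzel\`a--Ascoli compactness and lower semi-continuity of $v \mapsto \int_\Omega F(\nabla v)$. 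I expect the main obstacle to be the comparison step against affine barriers: it is precisely this truncation argument, together with the role of strict convexity in its equality case, that the later sections of the paper will have to replace by geometric comparison principles on level-sets when $F$ is merely convex or degenerate.
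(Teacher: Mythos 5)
Your scheme is the same Hilbert--Haar/Miranda scheme that the paper sketches: uniqueness and the comparison principle from strict convexity of $F$, the boundary estimate \eqref{controlb} obtained by comparing with the BSC affine barriers (which minimize for their own boundary data), and Haar's translation argument to propagate the boundary modulus into a global $K$-Lipschitz bound. Those steps are correct as you state them (and the comparison you prove against affine $\ell$ extends verbatim, by the same truncation-plus-uniqueness argument, to the comparison between two minimizers with ordered boundary data that you invoke in the translation step).

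The gap is in the existence step. The direct method in $\mathcal L_K$ produces a minimizer of $\int_\Omega F(\nabla \cdot)$ \emph{within} $\mathcal L_K$, whereas the theorem asks for a minimizer in the class of all Lipschitz functions; your a priori estimate was proved ``for any minimizer'' of the unconstrained problem, whose existence is exactly what is at stake, so as written the argument does not close. Concretely, a Lipschitz competitor $w$ with $w=g$ on $\partial\Omega$ and $\operatorname{Lip}(w)>K$ does not belong to $\mathcal L_K$, and the convex combinations $(1-t)u+tw$ leave $\mathcal L_K$ for every $t>0$, so constrained minimality of $u$ gives no information about $\int_\Omega F(\nabla u)\ls\int_\Omega F(\nabla w)$. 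The missing step is precisely the device the paper attributes to Miranda: minimize \eqref{eqmir} in each class of $p$-Lipschitz functions with $p\gs K$, note that all your comparison competitors (the truncations $\min(u_p,\ell^\pm)$ and $\max(u_p,\ell^\pm)$ with the BSC barriers, and the translated and vertically shifted functions in Haar's argument) are again $p$-Lipschitz, so the boundary and interior estimates apply to the constrained minimizers $u_p$ and show they are all $K$-Lipschitz; by strict convexity the minimizer in each convex class is unique, hence $u_p=u_K$ for every $p\gs K$, and since every Lipschitz competitor lies in some $p$-Lipschitz class, $u_K$ minimizes among all Lipschitz functions. With this short addition your proof is complete and coincides with the paper's.
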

There is no work on $BV$ (or even in $W^{1,1}$) functions in this article: every function is at least continuous. Nonetheless, the techniques used to show this theorem are fundamental in this whole paper. Let us give a few words about the proof.

First, since $F$ is strictly convex, there is at most one (Lipschitz) minimizer of \eqref{eqjer}. And we have the following comparison principle, which directly follows from the strict convexity of $F$.
\begin{prop}
 Let $u$ and $v$ two minimizers of \eqref{eqjer} with boundary data $g$ and $h$. Then, if $g \ls h$, $u \ls v.$
\label{jercomp}
\end{prop}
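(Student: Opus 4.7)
The plan is to run the classical min/max swap, which turns the comparison principle into an application of uniqueness. I would set $w := \min(u,v)$ and $W := \max(u,v)$; both are Lipschitz on $\overline{\Om}$, and the hypothesis $g \ls h$ ensures $w = g$ and $W = h$ on $\partial\Om$. Hence $w$ is admissible in the problem with boundary data $g$ (competing against $u$), while $W$ is admissible in the problem with boundary data $h$ (competing against $v$).

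The key observation is the pointwise identity
$$F(\nabla w) + F(\nabla W) = F(\nau) + F(\nabla v) \quad \text{a.e. in } \Om.$$
Indeed, on $\{u<v\}$ one has $\nabla w = \nau$ and $\nabla W = \nabla v$; on $\{u>v\}$ the roles are reversed; and on the coincidence set $\{u=v\}$, Stampacchia's lemma applied to the Lipschitz function $u-v$ gives $\nau = \nabla v$ almost everywhere, so the identity holds trivially there. In every case, the unordered pair $\{\nabla w, \nabla W\}$ agrees with $\{\nau,\nabla v\}$.

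Integrating and using the minimality of $u$ against $w$ and of $v$ against $W$ then yields
$$\int_\Om F(\nau) + \int_\Om F(\nabla v) \ls \int_\Om F(\nabla w) + \int_\Om F(\nabla W) = \int_\Om F(\nau) + \int_\Om F(\nabla v),$$
so both inequalities must be equalities. In particular $w$ is a Lipschitz minimizer with boundary data $g$, and by the uniqueness part of Theorem~\ref{thmmiranda65} (which is precisely where strict convexity of $F$ is used) we deduce $w = u$, i.e.\ $u \ls v$. The only subtle point is the gradient swap on the coincidence set, handled by Stampacchia's lemma; everything else is bookkeeping, and strict convexity of $F$ enters only indirectly, through the uniqueness it grants.
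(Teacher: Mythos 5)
Your argument is correct and is exactly the standard way the paper's assertion (``directly follows from the strict convexity of $F$'') is meant to be fleshed out: the min/max swap makes $\min(u,v)$ and $\max(u,v)$ admissible competitors, the pointwise gradient identity forces both minimality inequalities to be equalities, and strict convexity enters through the uniqueness part of Theorem~\ref{thmmiranda65} to conclude $\min(u,v)=u$. The treatment of the coincidence set via Stampacchia's lemma is a correct and welcome detail; no gap.
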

To show the existence, Miranda minimize \eqref{eqmir} in the classe of $p$-Lipschitz functions, providing some function $u_p$. To make the sequence $(u_p)$ converge, he shows that they actually all share a Lipschitz constant. This is a regularity result which will be fundamental in what follows.

Thanks to the (BSC) and Proposition \ref{jercomp}, one controls the behavior of a minimizer on the boundary. Indeed, since $f^\pm$ are affine, they are natural minimizers of $\int F(\nabla u).$ The proposition above applied to $u$ and $f^\pm$ shows, after straightforward computations,
\begin{equation}  \forall (y,\hat y) \in \overline \Omega \times \partial \Omega, \quad |u(y) - u(\hat y)| \ls K |y-\hat y|. \label{controlb} \end{equation}

The most important result is that the control of the reguarity of a minimizer comes directly from the control on its boundary: let $u$ be a minimizer of \eqref{eqjer} which satisfies \eqref{controlb}. Then, it is $K$-Lipschitz.
To prove it, Miranda uses that the translational invariance of the integral. If $y'$ and $y$ are two points in $\Omega$, one defines $v(x):=u(x+(y'-y)).$ Thanks to the comparison principle, $\max (u-v)$ is reached on the boundary of $\overline{\Omega \cap (\Omega + y'-y)},$ in some $\hat x$. That yields the expected result after simple computations.

Finally, let us make a remark on the bounded slope condition:
\begin{rem}
 Let us assume that $\Omega$ is uniformly convex and $g$ is $\mathcal C^2$. Then, $g$ satisfies the BSC. Pierre Bousquet proved in \cite{bousquet07} that if $g$ is only continuous, Theorem \ref{thmmiranda65} still holds (in the class of continuous functions instead of Lipschitz ones). The idea is to approximate $g$ by $\mathcal C^2$ functions $g_i$ and control the Lipschitz norms of the approximate minimizers. In addition, Bousquet deals with functions in $W^{1,1}.$ See also \cite{bousquet15} for a generalization where affine functions are no longer minimizers.
\end{rem}
In the sequel, we work with non strictly convex functionals and therefore we work in the class of functions with bounded variation.
\subsection{Functions with bounded variation}
\begin{defi}
Let $u \in L^1(\Omega ; \mathbb R)$. We say that $u$ has bounded variation and note $u \in BV(\Omega)$ if its distributionnal derivative $Du$ is a Radon measure. Then, we call $TV(u, \Omega)$ the norm of this derivative, as a Radon measure:
$$TV(u; \Omega) = \sup_{\phi \in \mathcal C^1_c(\Omega), \; \Vert \phi \Vert_{L^\infty} \ls 1 } \scalv{Du}{\phi}=\sup \left \{ \int_\Omega u \div \phi \, \middle | \, \phi \in \mathcal C^\infty_c(\Omega, \Rn),\; \Vert \phi \Vert_{L^\infty} \ls 1. \right \}.$$
\end{defi}

\begin{defi} \label{caccio}
Let $E$ be a mesurable set in $\Rn$. We say that it has finite perimeter in $\Omega$ if its characteristic function $1_E$ has bounded variation in $\Omega$. We note
$$\per(E;\Omega) := TV(1_{E};\Omega).$$
If $\Omega = \Rn$, we write $\per(E).$
\end{defi}
\begin{prop}
 Let $A$ and $B$ two finite perimeter sets. Then,
$$\per(A\cap B;\Omega) + \per (A\cup B ;\Omega) \ls \per(A;\Omega)+\per (B; \Omega).$$
\label{unint}
\end{prop}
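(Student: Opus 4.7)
The plan is to reduce to the smooth case by regularization and then pass to the limit using lower semicontinuity of the perimeter. First I would approximate $1_A$ and $1_B$ by smooth functions: by a classical density result (obtained by mollification of the truncations), there exist sequences $(f_k)$ and $(g_k)$ in $\mathcal C^\infty(\Om) \cap W^{1,1}(\Om)$ taking values in $[0,1]$ such that $f_k \to 1_A$ and $g_k \to 1_B$ in $L^1(\Om)$ and
\[
\int_\Om |\nabla f_k| \longrightarrow \per(A;\Om), \qquad \int_\Om |\nabla g_k| \longrightarrow \per(B;\Om).
\]

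The key pointwise identity is the following: for Lipschitz (or $W^{1,1}$) functions $f,g:\Om\to \R$, one has, almost everywhere in $\Om$,
\[
|\nabla \min(f,g)| + |\nabla \max(f,g)| = |\nabla f| + |\nabla g|,
\]
because $\nabla \min(f,g) = \nabla f$ a.e. on $\{f<g\}$ and $\nabla g$ a.e. on $\{f>g\}$ (with symmetric statement for $\max$), while on $\{f=g\}$ both $\nabla f$ and $\nabla g$ vanish a.e. by Stampacchia's lemma. Applied to $f_k,g_k$, this yields
\[
\int_\Om |\nabla \min(f_k,g_k)| + \int_\Om |\nabla \max(f_k,g_k)| = \int_\Om |\nabla f_k| + \int_\Om |\nabla g_k|.
\]

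Now I would pass to the limit. Since $\min(a,b) = \tfrac12(a+b-|a-b|)$ and $\max(a,b)= \tfrac12(a+b+|a-b|)$ are $1$-Lipschitz operations, $\min(f_k,g_k) \to \min(1_A,1_B) = 1_{A\cap B}$ and $\max(f_k,g_k) \to 1_{A\cup B}$ in $L^1(\Om)$. The lower semicontinuity of total variation under $L^1$ convergence (which holds for the $BV$ seminorm, hence for the perimeter by Definition \ref{caccio}) then gives
\[
\per(A\cap B;\Om) \ls \liminf_{k\to\infty} \int_\Om |\nabla \min(f_k,g_k)|, \quad \per(A\cup B;\Om) \ls \liminf_{k\to\infty} \int_\Om |\nabla \max(f_k,g_k)|.
\]
Summing these two inequalities and using the identity above together with the convergence of $\int|\nabla f_k|$ and $\int|\nabla g_k|$ to the respective perimeters yields the claim.

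The only delicate step is the pointwise identity for $|\nabla \min|+|\nabla \max|$, which requires the fact that the gradient of a Sobolev function vanishes almost everywhere on a level set; everything else is routine approximation and lower semicontinuity.
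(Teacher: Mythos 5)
Your overall strategy is the standard one and is the argument the paper has in mind: the paper states this proposition without proof, and when it later asserts the anisotropic analogue \eqref{addperv} it explicitly says the proofs ``basically use the semi continuity of the total variation with respect to the $L^1$ convergence,'' which is exactly your scheme (strict smooth approximation of $1_A$, $1_B$, the pointwise identity for $\min$ and $\max$, then lower semicontinuity and summation of liminfs). So the structure is correct and matches the intended route.

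One justification, however, is wrong as written: you claim that on $\{f=g\}$ both $\nabla f$ and $\nabla g$ vanish a.e.\ by Stampacchia's lemma. That lemma concerns the level set of a \emph{single} function at a \emph{constant} value, and your statement is false in general (take $f=g$ equal to a non-constant smooth function, so that $\{f=g\}=\Om$ while $\nabla f \neq 0$). What you actually need, and what Stampacchia's lemma applied to the difference $f-g$ gives, is that $\nabla f = \nabla g$ a.e.\ on $\{f=g\}$; since $\nabla\min(f,g)$ and $\nabla\max(f,g)$ each coincide a.e.\ with one of $\nabla f,\nabla g$ on that set, the identity
\[
|\nabla \min(f,g)| + |\nabla \max(f,g)| = |\nabla f| + |\nabla g| \quad \text{a.e.}
\]
still holds there. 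With this correction the rest of your argument (Lipschitz continuity of $\min/\max$ giving $L^1$ convergence to $1_{A\cap B}$ and $1_{A\cup B}$, lower semicontinuity of the total variation, and the superadditivity of $\liminf$) goes through without change.
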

For every finite perimeter set $E \subset \Omega$, we note $E^{(1)}$ the set of points with density $1$ and $E^{(0)}$ the set of points with density zero. More precisely,
$$E^{(1)} := \left \{ x \in \Omega\, \middle | \, \lim\limits_{r \to 0}  \frac{\left \vert B_r(x) \cap E \right \vert}{\left \vert B_r \right \vert} = 1 \right \},$$
$$E^{(0)} := \left \{ x \in \Omega\, \middle | \, \lim\limits_{r \to 0}  \frac{\left \vert B_r(x) \cap E \right \vert}{\left \vert B_r \right \vert} = 0 \right \}.$$
 These sets are invariant to negligible modifications of $E$ and we have 
 $$|E \Delta E^{(1)}| = |E^c \Delta E^{(0)}| = 0.$$

\begin{defi}[Reduced boundary]
A point $x \in \Omega$ belongs to the reduced boundary of $E$ (we note $x\in \partial^\ast E$) if
\begin{enumerate}[i)]
\item For every  $\rho >0$, $\int_{B_\rho(x)} |D1_E| >0$.
\item The quantity
$$\nu_\rho (x) = \frac{\int_{B_\rho(x)} D1_E}{\int_{B_\rho(x)} |D1_E|}$$
has a limit $\nu(x)$ with $|\nu(x)| = 1$ when $\rho \to 0$.
\end{enumerate}
Then, we have 
\begin{equation}
 \label{integred}
\per(E;\Omega) = \int_{\partial^\ast E \cap \Omega} d \mathcal H^{n-1}(x).
\end{equation}
\end{defi}

\subsection{Rudin-Osher-Fatemi denoising procedure}
In 1992, Rudin, Osher and Fatemi proposed in \cite{rudin92} a denoising procedure based on total variation minimization. More precisely, if $g : \Omega \subset \Rn \to \R$ is a noisy picture, they suggest to regularize it solving
\begin{equation} u = \argmin_{v \in BV(\Omega)} \int_{\Omega} |D u| + \frac{1}{\lambda} \int_\Omega \frac{(u-g)^2}{2} \label{rof}\end{equation}
where $\lambda$ is a real parameter.

In what follows, we are interested in anisotropic generalizations of this problem. More precisely, let $\varphi$ be a smooth, symmetric $(\varphi(-x) = \varphi(x))$ anisotropy (a norm in $\Rn$) such that $\varphi^2$ is strongly convex, we deal with
\begin{equation}
u = \argmin_{v \in BV(\Omega)} \int_{\Omega} \varphi(D u) + \frac{1}{\lambda} \int_\Omega \frac{(u-g)^2}{2}.
\label{arof}
\end{equation}
In this equation, the term 
$$ \int_{\Omega} \varphi(D  u) $$ has to be understood as 
$$\int_{\Omega} \varphi\left(\frac{D u}{|D u|}\right) \operatorname{d}(|D u|)(x) = \sup \left \{ \int_\Omega u \cdot \div \xi \; \middle \vert \; \varphi^\circ(\xi) \ls 1 \right \}$$
where $D u$ is the derivative of the $BV$-function $u$, and $\frac{Du}{|Du|}$ is the Radon-Nikodym derivative of $Du$ with respect to $|D u|.$ $\varphi^\circ$ is the polar of $\varphi$ and is defined as
$$ \varphi^\circ (\xi) = \sup\{ \scalv{x}{\xi} \ \vert \ \varphi(x) \ls 1 \}.$$
Since the functional $u \mapsto \int_{\Omega} \varphi(D u) + \frac{1}{\lambda} \int_\Omega \frac{(u-g)^2}{2}$ is strictly convex and semi continuous (thanks to the semi continuity of the total variation), it has a unique minimizer in $BV(\Omega).$

In all the following, we are searching for the links which may exist between the regularity of $g$ and the regularity of $u$.

\subsection{On the level-sets of minimizers}
In this subsection, we give a few results which link the minimizing property of $u$ in \eqref{arof} and the minimizing property of each level-set of $u$
\begin{equation}
 E_t := \{u > t\}.
\label{levelset}
\end{equation}
To this aim, let us introduce some anisotropic variants of the quantities presented above. 
\begin{defi}[Anisotropic perimeter]
 Let $E$ has finite perimeter. We can define the anisotropic $\varphi$-perimeter by
\begin{equation}
\per_\varphi(E;\Omega) := \int_{\partial^\ast E \cap \Omega} \varphi(\nu_E) d \mathcal H^{n-1}.
 \label{anisper}
\end{equation}
\end{defi}
Note that if $\varphi = Id$, then, thanks to \eqref{integred}, we obtain the usual perimeter.
It is easy to show that $\per_\varphi$ satisfies the same properties as the isotropic perimeter (with the same proofs which basically use the semi continuity of the total variation with respect to the $L^1$ convergence). For instance,
\begin{equation}\per_\varphi( E \cap F) + \per_\varphi(E\cup F) \ls \per_\varphi(E) + \per_\varphi(F) \label{addperv}\end{equation}
and the key-tool in what follows, the so called anisotropic coarea formula (see \cite[Th. 1.23]{giusti84} for the isotropic case and \cite[Rem. 4.4]{AmaBel94} for the anisotropic one) which leads to the
\begin{prop}
\label{proparof}
 Let $u \in BV(\Omega).$ Then, $u$ minimizes \eqref{arof} with $\lambda = 1$ if and only if for every $t \in \R$, the level sets $E_t$ of $u$ minimize
\begin{equation} F \mapsto \per_\varphi(F) + \int_F t-g. \label{parof}\end{equation}
\end{prop}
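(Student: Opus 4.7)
The plan is to reduce the equivalence to a pointwise-in-$t$ statement by slicing both terms of the ROF energy into their level-set contributions. The first ingredient is the anisotropic coarea formula, which gives
$$\int_\Omega \varphi(Du) = \int_{-\infty}^{+\infty} \per_\varphi(E_t) \, dt.$$
The second ingredient is a layer-cake identity for the fidelity term. Here I would start from the elementary scalar identity valid for all $a,b \in \R$,
$$\frac{(a-b)^2}{2} = \int_{-\infty}^{+\infty} (t-b)\bigl(\mathbf 1_{\{a>t\}} - \mathbf 1_{\{b>t\}}\bigr) \, dt,$$
(which one checks by evaluating the right-hand side as $\int_b^a (t-b)\,dt$), apply it pointwise to $a=u(x)$, $b=g(x)$, and integrate in $x$ using Fubini. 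After discarding the piece that does not depend on $u$, one obtains
$$\int_\Omega \frac{(u-g)^2}{2} \, dx = \int_{-\infty}^{+\infty} \int_{E_t} (t-g) \, dx \, dt + C(g).$$

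Combining the two slicings, the ROF functional rewrites as
$$\int_\Omega \varphi(Du) + \int_\Omega \frac{(u-g)^2}{2} \, dx = \int_{-\infty}^{+\infty} \Bigl[\per_\varphi(E_t) + \int_{E_t}(t-g) \Bigr] dt + C(g).$$
The forward implication is then immediate: if each $E_t$ minimizes the geometric functional \eqref{parof}, then for any competitor $v \in BV(\Omega)$ with level sets $F_t = \{v>t\}$ one applies the same two decompositions to $v$, compares the integrands pointwise in $t$, and integrates to conclude that $u$ beats $v$.

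The converse is the delicate direction. Assume $u$ minimizes the ROF energy and fix $t_0$; I would argue by contradiction, supposing that some set $F$ strictly beats $E_{t_0}$ in \eqref{parof}. The idea is to build a competitor $v$ whose level sets coincide with those of $u$ for $t \neq t_0$ but whose $t_0$-level set has been pushed toward $F$ on a small window $[t_0-\eta,t_0+\eta]$ of levels — concretely by setting $\{v > t\} = F \cap E_{t-\eta} \cup E_{t+\eta}$ for $t$ in this window and $\{v>t\}=E_t$ outside — and checking via the coarea formula in reverse that this defines a $BV$ function whose ROF energy is strictly less than that of $u$. The main obstacle is precisely this construction: one must select the replacement sets in a measurable way in $t$, verify monotonicity (so that they really are the super-level sets of some function $v$), and control both terms of the energy uniformly enough to derive a contradiction as $\eta \to 0$. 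This technical step is standard in the Caselles--Chambolle--Novaga circle of ideas (see \cite{chambolle07,chambolle11}) and can be quoted without redoing every estimate.
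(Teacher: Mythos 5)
Your slicing framework is the same standard route the paper itself relies on: the paper states this proposition without proof, as a direct consequence of the anisotropic coarea formula together with the arguments of \cite{chambolle07,chambolle11}, so the first half of your plan is on target. The direction ``all level sets minimize $\Rightarrow$ $u$ minimizes'' is essentially correct, with one repair: the identity $\int_\Omega\frac{(u-g)^2}{2}=\int_\R\int_{E_t}(t-g)\,dx\,dt+C(g)$ cannot be used as written, because both terms on the right are infinite (the integrand behaves like $|t|\,|\Omega|$ as $t\to-\infty$). The pointwise identity you quote must be kept in difference form, applied simultaneously to $u$ and the competitor $v$: one gets $J(u)-J(v)=\int_\R\bigl[\per_\varphi(E_t)+\int_{E_t}(t-g)-\per_\varphi(F_t)-\int_{F_t}(t-g)\bigr]dt$, where Fubini is legitimate because the inner fidelity difference is supported in $t$ between $u(x)$ and $v(x)$ and is dominated by $\frac{(u-g)^2}{2}+\frac{(v-g)^2}{2}\in L^1$, and the two perimeter integrals are finite by coarea. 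With that phrasing the easy implication is complete.

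The genuine gap is the converse, and the specific construction you sketch does not work. The family $\{v>t\}=(F\cap E_{t-\eta})\cup E_{t+\eta}$ on $[t_0-\eta,t_0+\eta]$ and $\{v>t\}=E_t$ outside is in general not nested in $t$: at the upper edge, a point with $t_0+\eta<u(x)\le t_0+2\eta$ and $x\notin F$ lies in $\{v>t'\}=E_{t'}$ for $t'$ slightly above $t_0+\eta$ but not in the window set for $t$ slightly below it, and a symmetric failure occurs at the lower edge since $F\cap E_{t_0-2\eta}$ need not be contained in $E_{t_0-\eta}$. Non-nested sets are not the superlevel sets of any function, so the coarea formula cannot be run in reverse on them. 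Moreover, even with a corrected interpolation, dividing the energy comparison by the window length and sending $\eta\to0$ only yields minimality of $E_{t_0}$ at Lebesgue points of $t\mapsto\per_\varphi(E_t)+\int_{E_t}(t-g)$, i.e.\ for a.e.\ $t_0$, whereas the proposition claims every $t$; one then still needs the stability step that minimality passes to the monotone limit $E_{t_0}=\bigcup_{t>t_0}E_t$ (using $1_{E_t}\to1_{E_{t_0}}$ in $L^1$, lower semicontinuity of $\per_\varphi$, and adapting competitors), or the alternative argument via uniqueness of the ROF minimizer and the comparison principle for the geometric problems. Quoting \cite{chambolle07,chambolle11} for this direction is defensible---it is exactly what the paper does---but then this part of your text is a citation rather than a proof, and the construction you do spell out should either be fixed (monotone interpolation, plus the a.e.-to-every-$t$ upgrade) or dropped in favour of the quoted argument.
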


\subsection{Jump-set}
Let us state here the first regularity results on $u$ which come from regularity of $g$. They link the jump set $J_u$ of the solution to the one of the data $J_g$. These results come from \cite{chambolle07} for the isotropic version and \cite{jalalzai12} for the anisotropic (with space dependency) generalization.
\begin{thm}[Caselles, Chambolle, Novaga, '07 and Jalalzai, '12]
 Let $g \in BV(\Omega) \cap L^\infty(\Omega)$ where $\Omega \subset \Rn$, and let $u$ minimize \eqref{arof}. Then
$$ J_u \subset J_g$$
up to a $\H^{n-1}$-negligible set.
\end{thm}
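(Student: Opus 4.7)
The plan is to use Proposition~\ref{proparof}, which reduces the problem to a statement about the reduced boundaries of the level sets $E_\tau = \{u > \tau\}$: each $E_\tau$ minimizes $F \mapsto \per_\varphi(F)+\int_F(\tau-g)$, and $J_u = \bigcup_{s<t\in \mathbb Q}(\partial^* E_s \cap \partial^* E_t)$ up to $\H^{n-1}$-negligible sets. It suffices therefore to show that for $\H^{n-1}$-a.e. $x \in \partial^* E_s \cap \partial^* E_t$ (with $s<t$), one has $x \in J_g$.

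The standard $BV$ structure at such an $x$ forces the two approximate outer normals to coincide (the blow-ups of $E_t\subset E_s$ at $x$ are nested half-spaces, hence equal), so $\partial^* E_s$ and $\partial^* E_t$ have the same tangent hyperplane $\nu_u(x)^\perp$ at $x$. Geometrically, the minimality of $E_s$ and $E_t$ given by Proposition~\ref{proparof} amounts to saying that $\partial E_s$ and $\partial E_t$ have prescribed anisotropic mean curvatures $g-s$ and $g-t$ respectively, in a weak sense. Two surfaces tangent at $x$ but with strictly different prescribed curvatures must be separated by a jump of the prescription: the resulting comparison forces $g^+(x)-g^-(x) \ge t-s$. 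Letting $s\downarrow u^-(x)$ and $t\uparrow u^+(x)$ along rationals yields $g^+(x)-g^-(x) \ge u^+(x)-u^-(x)>0$, so $x\in J_g$.

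To make the heuristic \emph{comparison of prescribed curvatures} rigorous, I would proceed via a variational argument. Using the minimality of $E_t$ against a competitor that adds a thin sliver $F_\rho \subset (E_s\setminus E_t) \cap B_\rho(x)$, and that of $E_s$ against the symmetric hollowing competitor $E_s \setminus F_\rho$, one derives a two-sided local inequality involving $\int_{F_\rho}(g-\tau)$, $\tau \in \{s,t\}$, and the anisotropic perimeter of the perturbation. Dividing by the volume of $F_\rho$ and passing to the limit $\rho\to 0$, the perimeter contributions become of negligible order (thanks to the common-normal property, which makes the sliver ``flat'' at $x$), and the integrals of $g$ are identified, via the $BV$ trace theory, with the one-sided approximate values $g^\pm(x)$, yielding the desired inequality on the jump of $g$.

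The main difficulty is controlling the boundary contribution of the anisotropic perimeter on $\partial B_\rho$: the naive choice of a ball as the perturbation region gives a leading-order term that exactly cancels the quantity one is trying to estimate (by submodularity), and a refined argument, using thin cylinders aligned with $\nu_u(x)$ with the height tending to zero faster than the radius and a careful scaling analysis, is needed to extract the strict inequality. This is the technical content of \cite{chambolle07} in the isotropic case and of \cite{jalalzai12} in the anisotropic one; the anisotropy enters only through the weight $\varphi(\nu_u(x))$ in the relevant perimeter densities and does not alter the structure of the argument.
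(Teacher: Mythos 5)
The paper does not actually prove this theorem: it is recalled as a known result and attributed to \cite{chambolle07} (isotropic case) and \cite{jalalzai12} (anisotropic case), so there is no internal proof to compare yours against. Your outline does follow the strategy of those references: reduction to the level sets via Proposition \ref{proparof}, the fact that $\H^{n-1}$-a.e.\ point of $J_u$ lies in $\partial^\ast E_s\cap\partial^\ast E_t$ for rational $s<t$, the common approximate normal at such a point, and a comparison of the prescribed curvatures of the two nested boundaries forcing a jump of $g$ of size at least $t-s$.

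As a proof, however, your text has a genuine gap exactly at the decisive step. The inequality $g^+(x)-g^-(x)\ge t-s$ is asserted, and the variational argument you sketch does not deliver it: adding the two minimality inequalities for the sliver/hollowing competitors makes the bulk terms combine into $(t-s)\,|F_\rho|\ge 0$ plus a nonnegative perimeter error, i.e.\ a trivially true inequality --- you acknowledge this cancellation yourself and then defer the ``refined argument'' with thin cylinders to the cited papers. But that refined step is the whole content of the theorem: one needs either the $C^{1,\alpha}$ regularity of $\partial^\ast E_s$ and $\partial^\ast E_t$ near $x$ (they are $\Lambda$-minimizers of $\per_\varphi$ since $g\in L^\infty$), after which the two boundaries are ordered graphs touching at $x$ and a weak comparison of the two prescribed-curvature graph equations, with the traces of $g$ taken from the correct sides, yields the estimate, or an equivalent quantitative excess-decay argument; neither is supplied, and simply letting the cylinder height shrink faster than its radius does not by itself make the lateral perimeter contribution negligible nor identify the averages of $g$ over the sliver with one-sided limits. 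A second, smaller issue: you cannot speak of $g^\pm(x)$ before knowing $x\in J_g$; the argument should bound the approximate upper and lower limits, $g^\vee(x)-g^\wedge(x)\ge t-s$, and then invoke the Federer--Vol'pert property of $BV$ functions (every point outside an $\H^{n-1}$-negligible set is either a point of approximate continuity or a jump point) to conclude $x\in J_g$ and, a posteriori, $g^+(x)-g^-(x)\ge u^+(x)-u^-(x)$.
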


Finally, we mention a recent paper by Valkonen \cite{valkonen14}, which extends this results to much more general regularizations.

\subsection{Density estimates on the minimizers of the geometric problem}
In this subsection, we give useful results on the minimizers of the anisotropic perimeter. The main density estimate comes from \cite{gonzales93}, with slight changes due to the anisotropic framework. It can be noticed that these estimates are also valid in the non local framework (see \cite{caffarelli10,CafVal13}).
\paragraph{A word on the anisotropies.} In this subsection, we will use an anisotropy $\varphi$. It just consists in a norm in $\Rn$. We assume that it is smooth and that $\varphi^2$ is strongly convex ($D^2 \varphi^2 \gs \lambda I$ with $\lambda >0$). As a result, there exist two constants $A$ and $B$ such that 
$$\forall |x| = 1, \quad A|x| \ls \varphi(x) \ls B|x|.$$
This inequality allows most of standard isotropic estimates to remain in this anisotropic framework.

\begin{prop}
 Let $E$ minimize \eqref{parof} in $B_1$ and assume that $0 \in \partial E$. Then, there exist $r_0$ and a constant $q >0$ which both depend on the dimension, $\Vert g-t\Vert_\infty$ and $A$ and $B$ such that for every $r \ls r_0$,
\begin{equation}\label{densest} 1-q \gs \frac{|B_r \cap E|}{|B_r|} \gs q \end{equation}
\label{densite1}
\end{prop}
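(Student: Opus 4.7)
The argument is a classical density estimate à la De Giorgi--Gonzalez, where the $\int_E(t-g)$ term is treated as a perturbation which is of lower order near the origin. Set
$$ m(r) := |E \cap B_r|, \qquad M := \|g-t\|_{L^\infty(B_1)}, $$
so that $m$ is non-decreasing with $m(r) \le \omega_n r^n$. We derive an ODI for $m$ and then integrate it.

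\emph{Step 1: comparison.} For almost every $r \in (0,1)$, one has $\per_\varphi(E;\partial B_r) = 0$ and, writing $E' := E \setminus B_r$, the decomposition
$$ \per_\varphi(E';B_1) = \per_\varphi(E; B_1 \setminus \overline{B_r}) + \int_{\partial B_r \cap E^{(1)}} \varphi(\nu_{B_r})\,d\H^{n-1}. $$
Using $E'$ as a competitor in \eqref{parof} on $B_1$ and canceling the common term $\per_\varphi(E; B_1\setminus\overline{B_r})$ gives
$$ \per_\varphi(E;B_r) \le \int_{\partial B_r \cap E^{(1)}} \varphi(\nu_{B_r})\,d\H^{n-1} + M\, m(r). $$

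\emph{Step 2: anisotropic isoperimetric inequality.} Using $A|\cdot| \le \varphi \le B|\cdot|$, the Euclidean isoperimetric inequality yields a constant $c_n > 0$ (depending only on $n$, $A$, $B$) such that
$$ \per_\varphi(E \cap B_r) \ge c_n\, m(r)^{(n-1)/n}. $$
Since $\per_\varphi(E\cap B_r) = \per_\varphi(E;B_r) + \int_{\partial B_r \cap E^{(1)}}\varphi(\nu_{B_r})\,d\H^{n-1}$, combining with Step 1 and bounding $\varphi(\nu_{B_r}) \le B$ together with the coarea identity $\H^{n-1}(\partial B_r \cap E^{(1)}) = m'(r)$ gives, for a.e.\ small $r$,
$$ c_n\, m(r)^{(n-1)/n} \le 2 B\, m'(r) + M\, m(r). $$

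\emph{Step 3: absorbing the volume term.} Because $m(r) \le \omega_n r^n$, we have $M\,m(r) \le M(\omega_n r^n)^{1/n}\,m(r)^{(n-1)/n}$. Choose $r_0 > 0$ so that $M(\omega_n r_0^n)^{1/n} \le c_n/2$; this only depends on $n$, $M$, $A$, $B$. Then for a.e.\ $r \le r_0$
$$ \frac{c_n}{2}\, m(r)^{(n-1)/n} \le 2 B\, m'(r), $$
i.e.\ $(m^{1/n})'(r) \ge c_n/(4 B n)$. Because $0 \in \partial E$, one has $m(r) > 0$ for every $r>0$ (otherwise $0$ would be a point of density $0$), so integrating from $0$ gives $m(r) \ge q\, \omega_n r^n$ for some $q>0$ depending only on the stated quantities.

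\emph{Step 4: upper bound.} The set $E^c$ minimizes the analogous functional $F \mapsto \per_\varphi(F) + \int_F (g-t)$, and $0 \in \partial (E^c)$. Repeating Steps~1--3 with $E^c$ in place of $E$ produces the symmetric bound $|E^c \cap B_r|/|B_r| \ge q$, which is \eqref{densest}'s upper estimate.

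The only delicate point is Step~3: one needs the volume term to be negligible against the isoperimetric term, and this dictates the choice of $r_0$. The rest is formal manipulation of Step~1's comparison together with the anisotropic isoperimetric inequality.
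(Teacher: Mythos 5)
Your proof is correct and is precisely the classical Gonzales--Massari--Tamanini density estimate that the paper does not reproduce but simply invokes by citing \cite{gonzales93} ``with slight changes due to the anisotropic framework'': the comparison with $E\setminus B_r$ for a.e.\ $r$, the anisotropic isoperimetric bound via $A|\cdot|\ls\varphi\ls B|\cdot|$, the absorption of the volume term for $r\ls r_0$ (which is where $\Vert g-t\Vert_\infty$ enters), and the symmetric argument for $E^c$ (legitimate since $\varphi(-x)=\varphi(x)$ is assumed) are exactly the intended route. No essential difference from the paper's (implicit) proof.
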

Let us state a corollary which will be useful in what follows. This corollary is often mentionned as \emph{clean-ball property} (see \cite{caffarelli10} and related work).

 \begin{cor}
Let $E$ be minimizing in $B_1$ with $0 \in \partial E$. Then, there exists $q >0$ (depending only on $A$, $B$, $\Vert t-g \Vert_\infty$ and the dimension) such that for all $r \ls r_0$ there exists a ball $B_{qr} \subset E\cap B_r$ of radius $qr.$ In addition, there exists another ball $B'_{qr}$ with the same radius, such that $B'_{qr} \subset \Rn \setminus E \cap B_r.$
\label{cleanball}
\end{cor}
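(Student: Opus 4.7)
The plan is to argue by contradiction from Proposition~\ref{densite1} coupled with the standard perimeter bound inherited from minimality. Passing to the complement exchanges $E$ and $\Rn\setminus E$: the set $E^c$ minimizes \eqref{parof} with $t-g$ replaced by $g-t$, whose $L^\infty$-norm is unchanged. Hence it suffices to produce the ball $B_{qr}\subset E\cap B_r$; the ball $B'_{qr}\subset(\Rn\setminus E)\cap B_r$ follows from the same statement applied to $E^c$.

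Applied at the origin with radius $r/2$, Proposition~\ref{densite1} gives
\[
|E\cap B_{r/2}|\gs c_0\, r^n
\]
for some $c_0>0$ depending only on $n$, $A$, $B$, $\Vert t-g\Vert_\infty$. Inserting $E\cup B_\rho$ or $E\setminus B_\rho$ as competitors in \eqref{parof} also yields the perimeter bound $\per_\varphi(E;B_\rho)\ls C\rho^{n-1}$ for every $\rho\ls 1/2$, which via $A|\cdot|\ls\varphi\ls B|\cdot|$ translates into an analogous estimate on $\H^{n-1}(\partial^\ast E\cap B_\rho)$.

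Assume for contradiction that no ball $B_{qr}(x)$ with $x\in B_{r/2}$ is contained in $E$, where $q<1/2$ is to be fixed. Then every $x\in E\cap B_{r/2}$ satisfies $\dist(x,\partial E)<qr$, so $E\cap B_{r/2}\subset T_{qr}(\partial E):=\{y:\dist(y,\partial E)<qr\}$. Extract a maximal $qr$-separated family $\{y_i\}_{i=1}^N$ in $\partial E\cap B_{r/2+qr}$; the balls $B_{qr/2}(y_i)$ are pairwise disjoint and lie in $B_r$. A rescaled application of Proposition~\ref{densite1} at each $y_i$ (the rescaling factor $qr/2\ls 1$ does not worsen the relevant constants) combined with the anisotropic relative isoperimetric inequality gives
\[
\H^{n-1}(\partial^\ast E\cap B_{qr/2}(y_i))\gs c_1(qr)^{n-1}.
\]
Summing these disjoint contributions against the perimeter bound yields $N\ls C_2 q^{-(n-1)}$, whence
\[
|T_{qr}(\partial E)\cap B_{r/2}|\ls \sum_{i=1}^N|B_{2qr}(y_i)|\ls C_3\, q\, r^n.
\]
Choosing $q$ small enough (in terms of $n$, $A$, $B$, $\Vert t-g\Vert_\infty$ only), the right-hand side becomes strictly smaller than $c_0 r^n$, contradicting $E\cap B_{r/2}\subset T_{qr}(\partial E)$. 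This produces the sought ball $B_{qr}\subset E\cap B_r$.

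The main obstacle lies in securing the lower bound on $\H^{n-1}(\partial^\ast E\cap B_{qr/2}(y_i))$: it is the perimeter counterpart of Proposition~\ref{densite1}, obtained by pairing the volume density lower bound with the relative isoperimetric inequality. One must also verify that the rescaled minimizers near each $y_i$ still satisfy the hypotheses of Proposition~\ref{densite1}, which only requires shrinking $r_0$ by a universal factor depending on $n$, $A$, $B$ and $\Vert t-g\Vert_\infty$.
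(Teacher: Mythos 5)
Your argument is correct: the reduction to one ball via the complement (using the symmetry of $\varphi$), the volume lower bound at the center from Proposition \ref{densite1}, the perimeter upper bound obtained from the competitor $E\setminus B_\rho$, and the covering of the $qr$-neighbourhood of $\partial E$ by disjointed boundary balls each carrying perimeter $\gtrsim (qr)^{n-1}$ (density estimates plus relative isoperimetric inequality) is precisely the standard clean-ball argument of \cite{caffarelli10} that the paper invokes without writing out a proof. The only points to keep explicit are the normalization of $E$ so that $\partial E=\spt D1_E$ (making the step ``$x\in E\cap B_{r/2}$ implies $\dist(x,\partial E)<qr$'' legitimate) and the harmless shrinking of $r_0$ when Proposition \ref{densite1} is recentered and rescaled at the points $y_i$, both of which you acknowledge.
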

Finally, these density estimates give some information on the points of density one.
\begin{prop}
\label{propdensone}
 Let $E$ be a minimizer of \eqref{parof}. Then, the sets $E^{(1)}$ of points with density 1 in $E$ and $E^{(0)}$ of points with density $0$ in $E$ are both open subsets of $\Rn$.
\end{prop}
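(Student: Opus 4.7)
The plan is to deduce both openness statements from the density estimate of Proposition \ref{densite1}, once $E$ has been replaced by a suitable representative. First I would fix, as is standard for sets of finite perimeter, a representative of $E$ (differing from the original only by a Lebesgue-negligible set) whose topological boundary $\partial E$ coincides with $\overline{\partial^\ast E}$. With this choice, Proposition \ref{densite1} yields, for every $x \in \partial E$ and every $r \ls r_0$,
\[
q \ls \frac{|B_r(x) \cap E|}{|B_r|} \ls 1-q.
\]
In particular no point of $\partial E$ has density $0$ or $1$, so that $E^{(1)} \cup E^{(0)} \subset \Rn \setminus \partial E = \operatorname{int} E \cup \operatorname{int} E^c$.

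From here the conclusion will be essentially immediate. For $x_0 \in E^{(1)}$, the density of $E$ at $x_0$ equals $1$, so $x_0 \notin \operatorname{int} E^c$; combined with the previous inclusion, $x_0 \in \operatorname{int} E$, and some open ball $B_\rho(x_0)$ is contained in $E$. Every $y \in B_\rho(x_0)$ is then itself an interior point of $E$, hence has density $1$ and belongs to $E^{(1)}$; this shows that $E^{(1)}$ is open. For $E^{(0)}$ I would argue by symmetry: since $\varphi$ is symmetric, $E^c$ minimizes the analogous functional $\per_\varphi(F) + \int_F(g-t)$, so the density estimate applies verbatim to $E^c$ (the constants in Proposition \ref{densite1} depend only on $\Vert g-t\Vert_\infty$), and the identity $E^{(0)} = (E^c)^{(1)}$ reduces the second openness statement to the first.

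The main (and essentially only) subtlety is the passage to the good representative and the implicit extension of the density estimate from the reduced boundary $\partial^\ast E$ to its closure. This is standard: the function $x \mapsto |B_r(x) \cap E|$ is continuous in $x$ for each fixed $r$, so the two-sided bound above passes to limits and thus holds on all of $\overline{\partial^\ast E} = \partial E$. Once this is granted, the proposition follows from the dichotomy ``either $x$ is a topological interior/exterior point, or its density is bounded away from $0$ and $1$''.
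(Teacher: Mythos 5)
Your argument is correct and follows exactly the route the paper intends: the two-sided density estimate of Proposition \ref{densite1}, applied (for the normalized representative, whose topological boundary is $\overline{\partial^\ast E}=\operatorname{supp}(D1_E)$) at every boundary point, shows that points of density $0$ or $1$ must lie in $\operatorname{int} E$ or $\operatorname{int} (E^c)$, which immediately gives openness of $E^{(1)}$ and $E^{(0)}$. Only remark: the detour through the complement for $E^{(0)}$ is unnecessary, since \eqref{densest} is already two-sided, so the same direct argument (a point of density $0$ cannot lie in $\operatorname{int} E$, hence lies in $\operatorname{int}(E^c)$, all of whose points have density $0$) concludes without invoking the symmetry of $\varphi$.
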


This observation enables to define
\begin{defi}
\label{defupm}
 Let $u$ be a minimizer of \eqref{arof} and let $E_s := \{u > s\}$ its level-sets. Then, we can define two particular representatives for $u$, denoted by $u^+$ and $u^-$, such that
 $$ \{ u^+ > s\} := E_s^{(1)} \qquad \text{ and } \qquad \{u^- \gs s\} := \left( E_s^{(0)}\right)^c.$$
 Then, we have $u^\pm = u$ a.e., $u^- \ls u \ls u^+$, $u^+$ is lower semicontinuous whereas $u^-$ is upper semicontinuous.
\end{defi}

Now, we are ready to give the main regularity results of this article. Let us begin by a theorem really in the spirit of Miranda's work.
\section{On convex domains with Neumann boundary conditions}
\label{secuc}
In this section, we apply Miranda's scheme to study \eqref{arof} with Neumann boundary conditions. The assumption of convexity of $\Omega$ allows to obtain the control of the modulus of continuity on the boundary, as we will see. 

\begin{thm}
Let $\Omega$ be a convex bounded domain and $f : \R \to \R$ be convex and satisfies $f(0)=0$ and $f(+\infty) = + \infty$.
Let $u$ be the minimizer, among all functions with bounded variations in $\Omega$, of 
$$\int_\Omega f(\varphi(\nabla u)) + \frac{(u-g)^2}{2}$$
with free boundary condition and assume that $g$ is continuous with $\varphi$-modulus $\omega$, that is
$$ \forall x,y \in \Omega, \quad |g(x) - g(y)| \ls \omega( \varphi^\circ (x-y))$$
where
$$\varphi^\circ(\xi) = \sup\left \{ \scalv{x}{\xi} \ \middle \vert \ \varphi(x) \ls 1 \right \}.$$
Then, $u$ is continuous with $\varphi$-modulus $\omega$.
\label{thmneu}
\end{thm}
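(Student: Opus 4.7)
The plan is to adapt Miranda's translation technique to the free-boundary setting, with convexity of $\Om$ playing the role that the bounded slope condition plays in the Dirichlet case. Fix $h \in \Rn$ such that $V := \Om \cap (\Om - h)$ is nonempty, set $c := \omega(\varphi^\circ(h))$, and introduce the translate $v(x) := u(x+h)$ on $\Om - h$. Since $f$, $\varphi$ and Lebesgue measure are translation invariant, $v$ is the unique BV minimizer on $\Om - h$ of the analogous functional with data $g_h := g(\cdot + h)$, and the assumption on the modulus of $g$ gives $|g - g_h| \ls c$ on $V$. It thus suffices to prove $u \ls v + c$ on $V$: exchanging $h$ with $-h$ then yields $|u(x) - u(x+h)| \ls \omega(\varphi^\circ(h))$ for all $x \in V$, and the lower/upper semicontinuous representatives of Definition~\ref{defupm} promote this to a pointwise statement on $\Om$.

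The first building block is the same-domain comparison principle: two free-boundary minimizers $u_1, u_2$ on a common domain with data $g_1 \ls g_2$ satisfy $u_1 \ls u_2$ almost everywhere. Its proof is the standard min/max trick: the pair $(\min(u_1, u_2), \max(u_1, u_2))$ is admissible; the BV min/max identity preserves $\int f(\varphi(D \cdot))$ since the Radon--Nikodym derivatives of $D\min$ and $D\max$ are pointwise selections from those of $Du_1$ and $Du_2$; and the elementary identity
\[
(\min - g_1)^2 + (\max - g_2)^2 - (u_1 - g_1)^2 - (u_2 - g_2)^2 = 2(u_1 - u_2)_+ (g_1 - g_2) \ls 0
\]
combined with strict convexity of the quadratic fidelity forces $(u_1, u_2) = (\min, \max)$ a.e. To cross the domain gap I define $u^\sharp := \min(u, v + c)$ on $V$, extended by $u$ on $\Om \setminus V$, and $v^\sharp := \max(v, u - c)$ on $V$, extended by $v$ on $(\Om - h)\setminus V$. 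On $V$ the min/max identity again yields $\int_V f(\varphi(Du^\sharp)) + \int_V f(\varphi(Dv^\sharp)) = \int_V f(\varphi(Du)) + \int_V f(\varphi(Dv))$, and a direct computation parallel to the one above, using $|g - g_h| \ls c$, gives
\[
\int_V \bigl[(u^\sharp - g)^2 + (v^\sharp - g_h)^2 - (u - g)^2 - (v - g_h)^2\bigr] \ls 0,
\]
with strict inequality as soon as $\{u > v+c\} \cap V$ has positive measure.

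The hard part is the boundary/jump analysis on the two internal interfaces $\partial(\Om - h) \cap \Om$ and $\partial \Om \cap (\Om - h)$: the modifications of $u$ on $\Om$ and of $v$ on $\Om - h$ are liable to introduce jumps there. This is where convexity of $\Om$ is essential: the two interfaces are congruent translates of pieces of $\partial \Om$ with opposite outward normals, and I would argue either that the corresponding jump contributions to $E(u^\sharp;\Om)$ and $E(v^\sharp;\Om - h)$ cancel in the sum, or regularise the modifications by Lipschitz cut-off functions supported strictly inside $V$ and pass to the limit at the price of a vanishing error. Once this is done, adding the two energy comparisons yields $E(u^\sharp;\Om) + E(v^\sharp;\Om - h) \ls E(u;\Om) + E(v;\Om - h)$, strict whenever $\{u > v+c\} \cap V$ has positive measure, contradicting the minimality of $u$ and $v$. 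Hence $u \ls v + c$ a.e.\ on $V$, which completes the argument.
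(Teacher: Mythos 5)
Your interior computation on $V=\Om\cap(\Om-h)$ is essentially the paper's own translation/min--max argument (Lemma \ref{lemneub}), but the step you label ``the hard part'' is not a technical nuisance to be absorbed by a cancellation or a cut-off: it \emph{is} the theorem, and neither of your two proposed fixes works. The gluing of $u^\sharp=\min(u,v+c)$ with $u$ across $\Gamma_1=\partial(\Om-h)\cap\Om$, and of $v^\sharp=\max(v,u-c)$ with $v$ across $\Gamma_2=\partial\Om\cap(\Om-h)$, produces jump terms whose energy cost is a \emph{nonnegative} surface integral of the trace excess $(u-v-c)^+$ (weighted by the recession function of $f\circ\varphi$, or infinite if $f$ is superlinear). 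These two contributions live on unrelated hypersurfaces --- $\Gamma_1+h$ and $\Gamma_2$ are different portions of $\partial\Om$, and the traces of $u$ involved are taken at different parts of $\overline\Om$ --- so there is no cancellation when you add the two energy comparisons; both terms sit on the wrong side of the inequality. The cut-off variant fails for the same reason: interpolating between $\min(u,v+c)$ and $u$ in a collar of width $\delta$ near $\Gamma_1$ costs at least on the order of the trace excess itself (the gradient of the interpolant is $(u-v-c)^+/\delta$ on a set of measure $\sim\delta$), so the error does not vanish unless $(u-v-c)^+=0$ near $\Gamma_1$ --- which is precisely the boundary estimate you are trying to prove. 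A telling symptom is that your argument never actually uses the convexity of $\Om$ nor the natural (Neumann) boundary condition, although both are essential hypotheses; the same scheme would ``prove'' the statement on a nonconvex domain.

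What the paper does at this point is exactly to supply the missing boundary control, and it cannot be bypassed. It regularizes $f$ by uniformly convex $f_\eps$ so that the approximate minimizer $u_\eps$ is continuous up to $\overline\Om$ and satisfies the Euler--Lagrange boundary condition $f_\eps'(\varphi(\nabla u_\eps))\,\nabla\varphi(\nabla u_\eps)\cdot\nu=0$ in the viscosity sense (Lemma \ref{lem:EL}). Then it considers $L=\sup_{x,y}\bigl(u_\eps(x)-u_\eps(y)\bigr)/\bigl(\eps+\omega(\varphi_\eps^\circ(x-y))\bigr)$ and shows by your translation argument that either $L\ls 1$ or $L$ is attained with $\hat x\in\partial\Om$; the point is that under the contradiction hypothesis one has $v_\eps\ls u_\eps$ on the internal interfaces, so the glued competitors are continuous there and no jump penalty appears. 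Finally, boundary attainment is excluded by testing the viscosity Neumann condition with $\psi(x)=u_\eps(\hat y)+L(\eps+\omega(\varphi_\eps^\circ(x-\hat y)))$, which yields $(\hat x-\hat y)\cdot\nu\ls 0$ and contradicts the (strict) convexity of $\Om$. This is the analogue of Miranda's boundary estimate \eqref{controlb}: the interior translation trick only propagates a modulus that has already been established at the boundary, and in the free-boundary setting that boundary step comes from the Neumann condition plus convexity, not for free. Your proposal is missing exactly this step, so as written it does not prove the theorem.
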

This theorem extends \cite[Th. 5.1]{chambolle11} to anisotropic framework and higher dimension (indeed, \cite{chambolle11} makes use of the regularity of the level-sets of $u$, which occurs only in low dimension: see \cite{SchSimAlm77}).

\begin{rem}
Note that it is enough to show Th. \ref{thmneu} for $\Omega$ strictly convex, $\varphi$ smooth uniformly elliptic (that is $\varphi^2 $ is strongly convex) and $g$ smooth. We can indeed approximate any norm $\varphi$ by $\varphi_n \ls  \varphi$ uniformly on compact subsets with $\varphi_n$ satisfying these properties. Noting that if $g$ is $\varphi$-continuous with modulus $\omega$, it is then $\varphi_n$-continuous with the same modulus (since $\varphi_n^\circ \gs \varphi^\circ$) and Th. \ref{thmneu} extends to any norm (even crystalline). \\
Moreover, it is easy to show that if $\Omega_n \to \Omega$ in Hausdorff distance, then the corresponding minimizers $u_n$ of \eqref{arof} in $\Omega_n$ converge to the minimizer $u$ of \eqref{arof} in $\Omega.$ \\
Finally, by approximation as well, we can assume that $g$ is smooth.
\end{rem}

Before proving this theorem, let us make a remark which somehow links the $\varphi$ modulus and the standard one (we are more likely to know the latter).
\begin{rem}
 \begin{itemize}
  \item If we work in Euclidean geometry ($\varphi = Id$), then the $\varphi$-modulus is nothing but the usual modulus.
  \item Since $\varphi^\circ$ is a norm, it is equivalent to the Euclidean one, so there exists $\mu >1$ such that 
$$\forall x \in \Omega, \quad \frac 1\mu |x| \ls \varphi^\circ(x) \ls \mu |x|.$$ 
Thus, if $g$ is continuous with usual modulus $\omega$, one can introduce 
$$\tilde \omega(r) := \omega(\mu r),$$
which satisfies
$$\forall x,y \in \Omega, \quad \tilde \omega ( \varphi^\circ (x-y)) = \omega (\mu \varphi^\circ (x-y)) \gs \omega( |x-y|)$$
and apply Theorem \ref{thmneu} with $\tilde \omega.$
 \end{itemize}
\end{rem}

The strategy of the proof is to work on the approximate problem 
\begin{equation} \min_u \int_{\Omega} f_\eps(\varphi(\nabla u)) + \frac{(u-g)^2}{2} \label{eq:apmin} \end{equation}
with $f_\eps \to f$, locally uniformly and $f_\eps \gs 0$, smooth and satisfies
$$ \varepsilon \ls f_\eps'' \ls \frac{1}{\eps}$$
as well as $f'_\eps(0) = 0.$
\begin{lem}
\label{lem:EL}
 The approximate minimizer $u_\eps$ of \eqref{eq:apmin} is continuous on $\overline \Omega$ and satisfies the Euler-Lagrange equation on $\partial \Omega$
 \begin{equation}  f_\eps'(\varphi(\nabla u))  (\nabla \varphi(\nabla u) \cdot \nu) = 0 \label{eqanibound} \end{equation}
 in the viscosity sense ($\nu$ being the outer normal to $\Omega$).
\end{lem}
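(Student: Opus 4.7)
The strategy is to treat the regularized problem as a nondegenerate quasilinear elliptic problem: solve it in $H^1(\Omega)$, derive the Euler--Lagrange system, and invoke classical regularity to upgrade the minimizer to $C^{1,\alpha}(\overline{\Omega})$. The boundary condition then holds pointwise, and a fortiori in the viscosity sense.

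First I would establish existence and uniqueness of $u_\eps$ in $H^1(\Omega)$. Writing $F_\eps(\xi) := f_\eps(\varphi(\xi))$, the assumptions $f''_\eps \gs \eps$ and $f'_\eps(0) = 0$ yield $f_\eps(t) - f_\eps(0) \gs \eps t^2/2$, so that $F_\eps(\xi) \gs (\eps A^2/2)|\xi|^2 - C_\eps$ using $\varphi \gs A|\cdot|$. The functional is therefore coercive on $H^1(\Omega)$, strictly convex (combining strict convexity of $f_\eps$ and of $\varphi^2$), and weakly lower semi-continuous, so the direct method yields a unique minimizer. A standard truncation (substituting $u_\eps$ by $u_\eps \wedge \sup g$ or $u_\eps \vee \inf g$ does not raise either term) gives $\|u_\eps\|_\infty \ls \|g\|_\infty$.

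Next I would derive the Euler--Lagrange equation. Let $A_\eps(\xi) := f'_\eps(\varphi(\xi))\nabla\varphi(\xi)$, extended by $0$ at $\xi=0$ (continuous since $|f'_\eps(\varphi(\xi))| \ls B|\xi|/\eps$ while $\nabla\varphi$ is bounded). Taking variations with respect to $v \in H^1(\Omega)$ gives
$$\int_\Omega A_\eps(\nabla u_\eps) \cdot \nabla v + \int_\Omega (u_\eps - g)\, v = 0,$$
from which $-\div A_\eps(\nabla u_\eps) + u_\eps - g = 0$ in $\Omega$ distributionally, and $A_\eps(\nabla u_\eps) \cdot \nu = 0$ on $\partial\Omega$ in the trace sense.

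The regularity step is where the main work lies, and is also the main obstacle. By the remark preceding the statement I may assume $\Omega$ smooth and strictly convex, $\varphi$ smooth with $\varphi^2$ uniformly elliptic, and $g$ smooth. The identity $D^2(\varphi^2) = 2\nabla\varphi \otimes \nabla\varphi + 2\varphi D^2\varphi \gs \lambda I$ together with $\eps \ls f''_\eps \ls 1/\eps$ shows that $DA_\eps(\xi)$ is bounded and uniformly positive definite on $\Rn \setminus \{0\}$, with constants depending only on $\eps, A, B, \lambda$; in particular $A_\eps$ is globally Lipschitz on $\Rn$. The Euler--Lagrange system is therefore a uniformly elliptic quasilinear Neumann problem with smooth data, and standard theory (Ladyzhenskaya--Ural'tseva, Lieberman) gives $u_\eps \in C^{1,\alpha}(\overline{\Omega})$. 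Continuity on $\overline{\Omega}$ and the pointwise validity of $f'_\eps(\varphi(\nabla u_\eps))\nabla\varphi(\nabla u_\eps) \cdot \nu = 0$ on $\partial\Omega$ follow, and a classical solution is automatically a viscosity solution. The delicate point is the ellipticity of $A_\eps$: although $\varphi$ is only $C^0$-homogeneous and $D^2\varphi$ blows up at the origin, the regularization $f'_\eps(0)=0$ exactly cancels this singularity, giving uniform ellipticity globally rather than only away from $\xi = 0$; once these bounds are secured, the remainder is entirely classical.
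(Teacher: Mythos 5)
Your route is genuinely different from the paper's. You aim for full $C^{1,\alpha}(\overline\Omega)$ regularity of $u_\eps$ for the conormal (Neumann) problem and then read off \eqref{eqanibound} pointwise; the paper deliberately proves much less regularity -- only continuity up to the boundary, obtained from the Caccioppoli-type estimate of Giaquinta--Giusti, an even reflection of $u_\eps$ across the smooth boundary, and Ladyzhenskaya--Ural'tseva -- and then establishes \eqref{eqanibound} \emph{directly from minimality} in the viscosity sense, by perturbing a touching test function with the barrier $\psi_\delta=\psi-\tfrac{\lambda}{2}[(\delta-d)^+]^2/\delta$ built from the distance to $\partial\Omega$, comparing $u_\eps$ with $\psi_\delta\wedge u_\eps$, and integrating by parts. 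That argument never differentiates $u_\eps$, which is the whole point of phrasing the boundary condition in the viscosity sense.

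The soft spot in your proposal is precisely the step you dispatch to ``standard theory.'' Your ellipticity computation is correct and is the same one the paper uses ($D^2F_\eps\gs\gamma I$, bounded), but $A_\eps=\nabla F_\eps$ is only Lipschitz, not $C^1$, at $\xi=0$ in the anisotropic case: along a ray of direction $\theta$ one has $D^2F_\eps(t\theta)\to \tfrac12 f_\eps''(0)\,D^2(\varphi^2)(\theta)$, which depends on $\theta$ unless $\varphi$ is Euclidean. Interior $C^{1,\alpha}$ still follows from difference quotients plus De Giorgi--Nash applied to $\partial_k u_\eps$, but the \emph{global} $C^{1,\alpha}(\overline\Omega)$ statement for the conormal problem is not an off-the-shelf citation of Ladyzhenskaya--Ural'tseva/Lieberman, whose boundary theorems assume continuity (H\"older/Dini) of $D_pA$; you would need to rerun a tangential difference-quotient and boundary De Giorgi--Nash argument for the conormal condition, and this is exactly the work the paper's remark alludes to when it says that only in the isotropic case is $u_\eps$ smooth up to the boundary with \eqref{eqanibound} holding classically. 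Separately, the final ``a fortiori'' is not literally automatic: at a boundary maximum of $u_\eps-\psi$ one only has $\nabla\psi(\hat x)=\nabla u_\eps(\hat x)-s\nu$ with $s\gs 0$, and one must use the monotonicity of $\nabla F_\eps$ (convexity of $F_\eps$) to pass from $A_\eps(\nabla u_\eps)\cdot\nu=0$ to $A_\eps(\nabla\psi)\cdot\nu\ls 0$; it is a one-line argument, but it should be stated. If you either supply the boundary regularity argument or replace it by the paper's variational barrier argument, the proof is complete.
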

We recall that the \emph{viscosity sense} means that if $\psi$ is a smooth function and $u-\psi$ reaches a maximum (resp. a minimum) at $\hat x \in \partial \Omega$, then
$$ f_\eps'(\varphi(\nabla \psi (\hat x)))  (\nabla \varphi(\nabla \psi (\hat x)) \cdot \nu(\hat x)) \ls 0 \quad \text{ (resp.} \gs 0).$$
See \cite{c92user} for an introduction to these notions, in particular Section 7 for generalized boundary conditions.
\begin{rem}
 In the isotropic case, $u_\eps$ satisfies an elliptic equation and therefore, $u_\eps$ is $\mathcal C^\infty$ up to the boundary and Equation \eqref{eqanibound} is satisfied classically and reads
 $$\nabla u_\eps \cdot \nu = 0 \quad \text{on } \partial \Omega.$$
\end{rem}

Note that the proof of \eqref{eqanibound} reveals the typical link between minimizing functionals and viscosity solutions. It will appear again in Section \ref{onesmooth} and has been extensively used in this type of problems \cite[Lemma 2]{caffarelli93} and more recently \cite{caffarelli10,chambolle122,thouroude12,mercier}, the three last references dealing with time dependent equations.
\begin{proof}
We first prove that $u_\eps$ is continuous. We will use \cite{giaquinta82} (the direct application of Theorem 3.1 gives interior continuity). We first note that Equality (3.2) of \cite{giaquinta82} holds for $u_\eps$ and even if the balls $B_R$ and $B_\rho$ cross the boundary,
\begin{equation}\label{eq32gg}
\forall \rho \ls R,\, \forall k,\quad \int_{\{u_\eps>k\} \cap B_\rho} |Du_\eps|^2 \ls \frac{C}{(r-\rho)^2} \int_{\{u_\eps>k\} \cap B_R} (u_\eps-k)^2.
\end{equation}

We want to obtain the continuity up to the boundary by applying \cite[Th. 6.1]{uraltseva68}. Nonetheless, this theorem only provides interior regularity.

To be able to obtain boundary regularity for $u_\eps$, we extend it in the following way. The boundary $\partial \Omega$ is smooth, so for every $\hat x \in \partial \Omega$, there exists a ball $B_r( \hat x)$ and a function $g$ such that 
$$ \Omega \cap B_r(\hat x)  = \{ (x',x_n) \ \vert \ x_n \ls g(x')\}.$$
As a result, by stating (with $x = (x',x_n)$)
$$\tilde u_\eps(x',x_n) = \left \{ \begin{matrix} u_\eps(x) \text{ if $x \in \Omega \cap B_r(\hat x) $ } \\ u_\eps(x',2g(x') - x_n) \text{ if $x \in  \Omega^c \cap B_r(\hat x)$ } \end{matrix} \right.,$$
we locally extend $u_\eps$ in the whole ball $B_r$. 
Thus equality \eqref{eq32gg} (for balls included in $B_r(\hat x)$) also holds for $\tilde u_\eps$ (with a different but controlled $C$). We conclude, as in \cite{giaquinta82}, using \cite[Th. 6.1]{uraltseva68} which provides the continuity of $\tilde u_\eps$ on the interior of $B_r(\hat x)$. In particular, covering all $\partial \Omega$ with such balls, we conclude that $u_\eps$ is continuous on $\overline \Omega.$

Now, let us prove \eqref{eqanibound}.

We denote $F(p) = f(\varphi(p))$, which is
$C^\infty$ away from $0$ and satisfies: $D^2F \ge \gamma I$,
 $D^2 F$ is bounded.

Assume that there exists a smooth function $\psi$ such that $u_\eps\le\psi$, and $u_\eps(\bar x)=\psi(\bar x)$ for some
$\bar x\in\partial\Om$ (we assume the contact is unique).
We assume in addition that
$\nabla F(\nabla\psi(\bar x))\cdot \nu_{\bar x}>0$
and will try to reach a contradiction. Note first that since $\nabla F (0) = 0$, we have $\nabla \psi(\bar x)) \neq 0.$

For $x\in\Om$, we denote $d(x) = \textup{dist}(x,\partial\Om)$.
For $\delta>0$, $\lambda>0$ fixed, we let
\[
\psi_\delta = \psi - \frac{\lambda }{2} \frac{[(\delta-d)^+]^2}{\delta}
\]
which is $\psi$, hence larger than $u_\eps$, when $d\ge \delta$,
while it is $\psi-\frac{\lambda \delta}{2}$ on $\partial\Om$ so that
$\psi_\delta(\bar x)<u_\eps(\bar x)$. We denote
$w_\delta= (u_\eps-\psi_\delta)^+$ and $A_\delta=\{w_\delta>0\}\subset
\{d\le\delta\}$.

We remark that if $x\in A_\delta$ so that $d\le \delta$,
\[
\nabla \psi_\delta = \nabla \psi + \lambda\left(1-\frac{d}{\delta}\right)^+\nabla d
\]
and in particular, on $\partial \Om$ (using $d=0$, $\nabla d= -\nu$) one
has
\[
\nabla\psi_\delta = \nabla\psi-\lambda\nu,
\]
while if $d=\delta$, $\nabla\psi_\delta=\nabla\psi$.
Then (still for $d\le \delta$), 
\[
D^2\psi_\delta = D^2\psi 
+ \lambda\left(1-\frac{d}{\delta}\right)^+D^2 d
- \frac{\lambda}{\delta}\nabla d\otimes\nabla d.
\]
Observe in particular that in the same set,
\begin{multline}\label{eq:coercivitydelta}
-\div\nabla F(\nabla\psi_\delta)
= -D^2F(\nabla\psi_\delta):D^2\psi_\delta
\\
= -D^2F(\nabla\psi_\delta):\left[D^2\psi 
+ \lambda\left(1-\frac{d}{\delta}\right)^+D^2 d \right]
+ \frac{\lambda}{\delta}(D^2F(\nabla\psi_\delta)\nabla d,\nabla d)
\\
\ge -C +\frac{\lambda\gamma}{\delta} \ge 2\|g\|_\infty
\end{multline}
if $\delta$ is small enough (here $C$ is a bound for many quantities,
such as the curvature of $\partial\Om$ near $\bar x$).

The minimality of $u_\eps$ ensures that
\[
\int_\Om F(\nabla u_\eps) + \frac{(u_\eps-g)^2}{2}dx
\le
\int_\Om F(\nabla (\psi_\delta\wedge u_\eps)) + \frac{(\psi_\delta \wedge u_\eps-g)^2}{2}dx
\]
or equivalently
\[
\int_{A_\delta}F(\nabla u_\eps)+\frac{(u_\eps-g)^2}{2}dx
\le
\int_{A_\delta}F(\nabla \psi_\delta)+\frac{(\psi_\delta-g)^2}{2}dx.
\]
Using the (strong) convexity of $F$ and $t\mapsto (t-g)^2/2$, it
follows
\[
\int_{A_\delta}\nabla F(\nabla \psi_\delta)\nabla w_\delta + (\psi_\delta-g) w_\delta
dx
+ \frac{1}{2}\int_{A_\delta}\gamma |\nabla w_\delta|^2+w_\delta^2 dx
\le 0
\]
which, integrating by parts (and using $w_\delta=0$ on $\partial A_\delta
\cap\Om$) yields
\[
\int_{A_\delta\cap \partial\Om} w_\delta\nabla F(\nabla \psi-\lambda\nu)\cdot\nu
+
\int_{A_\delta} \left(-\div\nabla F(\nabla \psi_\delta) + (\psi_\delta-g)
\right)w_\delta dx \le 0.
\]
Now, using~\eqref{eq:coercivitydelta}, we observe that in $A_\delta=\{w_\delta>0\} =\{\psi_\delta < u_\eps\}$, for $\delta$ small enough, we have
\[
-\div\nabla F(\nabla \psi_\delta) + (\psi_\delta-g) \ge 0
\]
so that we obtain
\[
\int_{\partial\Om} w_\delta\nabla F(\nabla \psi-\lambda\nu)\cdot\nu \le 0.
\]
If 
$\nabla F(\nabla\psi(\bar x))\cdot \nu_{\bar x}>0$, then we can choose
$\lambda$ small such that  in a neighborhood of $\bar x$,
$\nabla F(\nabla \psi-\lambda\nu)\cdot\nu>0$ (we use the fact
that $\nabla\psi(\bar x)\neq 0$ and that $\nabla F$ is continuous
away from $0$). We clearly obtain a contradiction if
$\delta$ is small enough, as it should follow  (observing that
$A_\delta$ has nonempty interior, as $\psi_\delta(\bar x)<u_\eps(\bar x)$,
and goes to $\{\bar x\}$ as $\delta\to 0$)
that
\[
\int_{\partial\Om} w_\delta\nabla F(\nabla \psi-\lambda\nu)\cdot\nu > 0.
\]

\end{proof}

Finally, let us conclude the proof of Theorem \ref{thmneu}. We assume that the modulus of continuity $\omega$ is smooth away from zero, with $\omega' > 0$. We introduce
$$L = \sup_{x,y \in \overline \Omega} \frac{u_\eps (x) - u_\eps(y)}{\eps + \omega(\varphi_\eps^\circ(x-y))}.$$
Since $u_\eps$ is continuous on $\overline \Omega$, it is reached at $\hat x \neq \hat y$. We will need the following

\begin{lem}
Either $L \ls 1$ or $L$ is reached on the boundary of $\Omega$.
\label{lemneub}
\end{lem}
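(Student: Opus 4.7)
The plan is a translation-and-comparison argument in the spirit of Miranda. I will assume for contradiction that $L>1$ and that the supremum is attained at a pair $(\hat x,\hat y)$ with \emph{both} $\hat x,\hat y\in \mathrm{int}\,\Omega$. The ratio vanishes when $x=y$, so one must have $\hat x\neq \hat y$; set $h:=\hat x-\hat y$ and $r:=\varphi_\eps^\circ(h)>0$, and observe that $\hat x$ is then an interior point of $\Omega\cap(\Omega+h)$.

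The key step is to introduce the translated competitor $v(x):=u_\eps(x-h)$ on $\Omega+h$. By translation invariance, $v$ is the unique minimizer of the analogous functional on $\Omega+h$ with data $g(\cdot-h)$; setting $F_\eps(p):=f_\eps(\varphi(p))$, the uniform ellipticity $\eps\le f_\eps''\le 1/\eps$ together with the smoothness and strong convexity of $\varphi^2$ imply that both $u_\eps$ and $v$ are classical solutions of
\[
-\div\nabla F_\eps(\nabla u_\eps)+u_\eps=g,\qquad
-\div\nabla F_\eps(\nabla v)+v=g(\cdot-h)
\]
in the interiors of their respective domains. By definition of $L$,
\[
u_\eps(x)-v(x)\le L\bigl(\eps+\omega(r)\bigr)\quad\text{on }\Omega\cap(\Omega+h),
\]
with equality at $\hat x$, so $u_\eps-v$ attains an interior maximum there. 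This gives $\nabla u_\eps(\hat x)=\nabla v(\hat x)$ and $D^2(u_\eps-v)(\hat x)\le 0$. Since the two gradients coincide, the matrix $D^2F_\eps$ takes the same value in both Euler--Lagrange equations at $\hat x$, and using its positive semidefiniteness one obtains
\[
\div\nabla F_\eps(\nabla u_\eps)(\hat x)-\div\nabla F_\eps(\nabla v)(\hat x)
=D^2F_\eps(\nabla u_\eps(\hat x)):D^2(u_\eps-v)(\hat x)\le 0.
\]
Subtracting the two PDEs at $\hat x$ and using the $\varphi$-modulus of continuity of $g$ then yields
\[
L\bigl(\eps+\omega(r)\bigr)=u_\eps(\hat x)-u_\eps(\hat y)\le g(\hat x)-g(\hat y)\le\omega(r),
\]
a contradiction with $L>1$, since the left-hand side is strictly larger than $\eps+\omega(r)>\omega(r)$. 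Hence, if $L>1$, the supremum must be attained with at least one of $\hat x,\hat y$ on $\partial\Omega$.

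The main obstacle I expect is justifying the classical pointwise step. It relies on $u_\eps$ being $C^2$ in $\mathrm{int}\,\Omega$, which follows in the regularized setting from standard quasi-linear elliptic regularity theory applied to the smooth uniformly elliptic PDE satisfied by $u_\eps$. A minor subtlety is the possible lack of $C^2$ regularity of $F_\eps$ at $p=0$, which can be handled either by choosing $f_\eps$ so that $F_\eps$ is genuinely $C^2$ at the origin, or by rerunning the same inequality through a standard doubling-of-variables viscosity argument that requires only $C^0$ regularity of $u_\eps$.
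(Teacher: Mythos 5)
Your argument takes a genuinely different route from the paper's. The paper never touches the interior Euler--Lagrange equation: assuming the pairs with a boundary point only realize $L-\delta$, it compares the energies of $u_\eps\vee v_\eps$ and $u_\eps\wedge v_\eps$ (with $v_\eps$ a translate of $u_\eps$ lowered by $(L-\delta)(\eps+\omega(\varphi_\eps^\circ(z)))$) on $\Omega$ and $\Omega+z$, cancels the gradient terms by the lattice identity, and reads the sign off the quadratic fidelity terms; only the continuity of $u_\eps$ from Lemma~\ref{lem:EL} is needed. Your pointwise computation at the interior contact point (equal gradients, $D^2(u_\eps-v)(\hat x)\ls 0$, positive semidefiniteness of $D^2F_\eps$, subtraction of the two equations, and the $\varphi$-modulus of $g$) is correct and does produce the contradiction with $L>1$.

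However, the regularity you invoke is a genuine gap as written. For a non-Euclidean $\varphi$ your first remedy is unavailable: with $f_\eps'(0)=0$ and $f_\eps''\gs\eps$ one has, along rays, $D^2F_\eps(t\theta)\to\tfrac{1}{2}f_\eps''(0)\,D^2(\varphi^2)(\theta)$ as $t\to0^+$, and a $2$-homogeneous function is twice differentiable at the origin only if this Hessian is independent of $\theta$, i.e.\ only if $\varphi$ is Euclidean; hence $F_\eps$ is never $C^2$ at $p=0$ in the anisotropic case, and classical $C^2$ regularity of $u_\eps$ across the set $\{\nabla u_\eps=0\}$ is not ``standard quasi-linear theory''. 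The viscosity fallback also needs two missing ingredients: that $u_\eps$ is an interior viscosity solution of its Euler--Lagrange equation, and a comparison argument tolerating the discontinuity of $D^2F_\eps$ at $p=0$. The fix is exactly the observation the paper exploits at the boundary: since $x\mapsto u_\eps(x)-L(\eps+\omega(\varphi_\eps^\circ(x-\hat y)))$ is maximal at the interior point $\hat x$ and $\omega'>0$, one gets $\nabla u_\eps(\hat x)=L\,\omega'(\varphi_\eps^\circ(\hat x-\hat y))\,\nabla\varphi_\eps^\circ(\hat x-\hat y)\neq 0$, and symmetrically $\nabla u_\eps(\hat y)\neq 0$; as $u_\eps\in C^{1,\alpha}_{\mathrm{loc}}$ for this uniformly convex $C^{1,1}$ integrand, a Schauder bootstrap on $\{\nabla u_\eps\neq 0\}$ gives $C^{2,\alpha}$ regularity near $\hat x$ and $\hat y$, which is all your computation uses. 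With that supplement your proof is a valid, shorter alternative; the paper's variational proof buys independence from any interior regularity theory beyond continuity.
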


\begin{proof}
First, note that this supremum is a maximum, since $\frac{u_\eps(x) - u_\eps(y)}{\eps+ \omega(\varphi_\eps^\circ(x-y))} \to 0$ as soon as $|x-y| \to 0.$ 

Let us now assume (to get a contradiction), that $L > 1$ and that the maximum is not reached on the boundary. That is, we assume that there exists $\delta >0$ such that 
$$\sup_{\substack{x \in \partial \Omega \\ y\neq x \in \overline \Omega}} \frac{|u_\eps(x) - u_\eps(y)|}{\eps + \omega(\varphi_\eps^\circ(x-y))} \ls L- \delta.$$ 
We can choose $\delta$ such that $L -  \delta >1$. 
Letting $$v_\eps = u_\eps(\cdot -z) - (L-\delta) (\eps+\omega(\varphi_\eps^\circ(z))),$$ we have just said that $v_\eps \ls u_\eps$ on $\partial \Omega_z$ where $\Omega_z = (\Omega + z) \cap \Omega.$ Using the very definition of $u_\eps$, one can write (on $\Omega \setminus (\Omega+z)$, we will impose $u_\eps \vee v_\eps = u_\eps$ and on $(\Omega+z) \setminus \Omega$, $u_\eps \wedge v_\eps = v_\eps$)
$$\int_\Omega  f(\varphi_\eps(\nabla u_\eps)) + \frac{(u_\eps-g)^2}{2} \ls \int_\Omega f(\varphi_\eps(\nabla (u_\eps \vee v_\eps))) + \frac{(u_\eps\vee v_\eps -g )^2}{2}$$
and
\begin{multline*}\int_{(\Omega+z)}  f(\varphi_\eps(\nabla v_\eps)) + \frac{(v_\eps(x)-(g(x-z)-(L-\delta)(\eps+\omega(\varphi_\eps^\circ(z)))))^2}{2} \\ \ls \int_{(\Omega+z)} f(\varphi_\eps(\nabla (u_\eps \wedge v_\eps))) + \frac{((u_\eps \wedge v_\eps)(x) -(g(x-z)-(L-\delta)\omega(\varphi_\eps^\circ(z))))^2}{2}.
\end{multline*}

We sum this two inequalities and notice that, since $u_\eps$ and $v_\eps$ are continuous, $$f(\varphi_\eps(\nabla u_\eps)) + f(\varphi_\eps(\nabla v_\eps)) = f(\varphi_\eps(\nabla (u_\eps \vee v_\eps))) + f(\varphi_\eps(\nabla (u_\eps \wedge v_\eps))),$$ that yields, denoting by $M_\eps$ the quantity $(L-\delta)(\eps+\omega(\varphi_\eps^\circ(z)))$,
$$0\ls \int_{ \Omega_z} ((u_\eps\vee v_\eps) - g)^2  - (u_\eps-g)^2 + ((u_\eps \wedge v_\eps)(x) -(g(x-z)-M_\eps))^2 - (v_\eps(x)-(g(x-z)-M_\eps))^2$$
which means
$$ 0 \ls \int_{ \Omega_z} -2 g (u_\eps \vee v_\eps) +2u_\eps g -2(g(x-z)-M_\eps) (u_\eps \wedge v_\eps) + 2(g(x-z)-M_\eps) v_\eps,$$
which is equivalent to
$$ 0 \ls \int_{\Omega_z} (u_\eps\vee v_\eps - u_\eps) \left( -g + g(x-z)-M_\eps \right). $$
Since $z \neq 0$, one has $\omega(z) >0$. In addition, $L-\delta >1$ so $$-g + g(x-z)-(L-\delta)(\eps+\omega(\varphi_\eps^\circ(z))) <0$$ ($g$ has $\varphi$ modulus of continuity $\omega$). Finally, $(u_\eps\vee v_\eps - u_\eps) \gs 0$ and since the integral is nonnegative, we must have $u_\eps\vee v_\eps - u_\eps = 0$ on the whole $\Omega_z$, which implies $u_\eps \gs v_\eps$ on $\Omega_z,$ that is
$$ u_\eps(x+z) - u_\eps(x) \ls (L-\delta) (\eps+\omega(\varphi_\eps^\circ(z))),$$
which is a contradiction with the definition of $L$.
\end{proof}

To conclude the proof of Theorem \ref{thmneu} , we just need to show that the constant $L$ can in fact not be reached on the boundary. We proceed by contradiction and assume that $\hat x \in \partial \Omega$.
\begin{rem}
In the isotropic case, this property is more easily shown. the boundary equation $\nabla u_\eps \cdot \nu$ ensures that the level lines of $u_\eps$ reach $\partial \Omega$ perpendicularly. The strict convexity of $\Omega$ prevent then the distance between two level-sets of $u_\eps$ from being attained on $\partial \Omega$, which means that the constant $L$ is not reached on the boundary.
\end{rem}

By assumption, we have, for every $x \in \overline{\Omega}$,
$$ \frac{u_\eps(x) - u_\eps(\hat y)}{\eps + \omega(\varphi_\eps^\circ (x-\hat y))} \ls \frac{u_\eps(\hat x) - u_\eps(\hat y)}{\eps + \omega(\varphi_\eps^\circ (\hat x- \hat y))},$$
which yields
$$u(x) \ls u(\hat y) +(\eps+\omega(\varphi_\eps^\circ(x-\hat y)))L =: \psi(x)$$
with equality at $x=\hat x$. Since $\hat x \neq \hat y$ and $\omega,\varphi^\circ$ are smooth away from zero, $\psi$ is an admissible test function for \eqref{eqanibound}, whose gradient does not vanish at $\hat x$. We can thus write
$$\underbrace{f'(\varphi(\nabla \psi))}_{>0 \text{ since }\nabla \psi \neq 0} \ \nabla \varphi (\nabla \psi) \cdot \nu  \ls 0.$$
Finally, one just has to notice that 
$$\nabla \psi(x) = L \omega'(\varphi^\circ(x-\hat y))\nabla \varphi^\circ (x-\hat y)$$
thus (since $\nabla \varphi$ is 0-homogeneous and $\varphi \nabla \varphi (\varphi^\circ (x) \nabla \varphi^\circ (x) ) = x$), 
$$\nabla \varphi(\nabla \psi ) (\hat x) = \nabla \varphi(\nabla \varphi^\circ (\hat x - \hat y)) = \frac{1}{\varphi^\circ (\hat x - \hat y) \varphi(\nabla \varphi^\circ (\hat x - \hat y))} (\hat x - \hat y)$$
which implies
$$(\hat x - \hat y) \cdot \nu \ls 0$$
which is not possible because of the uniform convexity of $\Omega.$

\section{A comparison result with a smooth set}
\label{onesmooth}
In this section, the anisotropy $\varphi$ is smooth on $\R^2\setminus\{0\}$ and $\varphi^2$ is strongly convex.
\begin{thm}Let $F$ and $G$ minimize in $\Omega$
$$\per_\varphi(F) + \int_F f $$ 
and
$$\per_\varphi (G) + \int_G g$$
with $f \ls g - \eps$.
We assume that $F \subset G$ and $\partial G$ is a $\mathcal C^1$ hypersurface. Then, either $F=G$ or $\partial F \cap \partial G = \emptyset.$
\label{thmsmooth}
\end{thm}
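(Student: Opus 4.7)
We argue by contradiction. Assume $F \subsetneq G$ and that there exists $x_0 \in \partial F \cap \partial G$. After translating and rotating we may take $x_0 = 0$, and by the $\mathcal{C}^1$ regularity of $\partial G$ we may set coordinates so that in a small ball $B_r$ we have $G \cap B_r = \{(x', x_n) : x_n < h(x')\}$ with $h \in \mathcal{C}^1$, $h(0) = 0$ and $\nabla h(0) = 0$. Replacing $F$ by its precise open representative $F^{(1)}$ of Proposition~\ref{propdensone}, we have $F \cap B_r \subset \{x_n < h(x')\}$ and $0 \in \partial F$.

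The plan, in the spirit of Lemma~\ref{lem:EL} and of~\cite{caffarelli93,caffarelli10}, is to construct a smooth ``inner barrier'' $\Sigma_\delta = \{x_n = \psi_\delta(x')\}$ tangent to $\partial G$ at $0$ from inside, and to compare the minimalities of $F$ and $G$ against competitors supported in the thin cap
\[
C_\delta := \{(x',x_n) \in B_r : \psi_\delta(x') < x_n < h(x')\} \subset G.
\]
Concretely, take
\[
\psi_\delta(x') := h_\delta(x') - \frac{\lambda}{2\delta}\bigl[(\delta - |x'|)^+\bigr]^2,
\]
where $h_\delta$ is a smooth mollification of $h$ recentered so that $h_\delta(0) = 0$, and $\lambda$ is chosen in terms of the modulus of continuity of $\nabla h$ so that $\psi_\delta \leq h$ on $B_r$ with strict inequality for $|x'|\geq \delta$. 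The correction term is identical to the one appearing in the proof of Lemma~\ref{lem:EL} and plays the same role: combined with the strong convexity of $\varphi^2$, it yields the coercivity gain of~\eqref{eq:coercivitydelta}.

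Testing $G$'s minimality against $\widetilde G := G \setminus C_\delta$ and $F$'s minimality against $\widetilde F := F \cup C_\delta$ (both admissible since they agree with $F, G$ outside $B_r$, which we take $\subset \Omega$) and summing yields
\[
\bigl[\per_\varphi(\widetilde F) - \per_\varphi(F)\bigr] + \bigl[\per_\varphi(\widetilde G) - \per_\varphi(G)\bigr] \geq \int_{C_\delta \cap F} g + \int_{C_\delta \setminus F}(g-f).
\]
On the left-hand side, the smoothness of $\Sigma_\delta$ makes the first-variation computation of Lemma~\ref{lem:EL} available: up to errors $o(|C_\delta|)$ as $\delta \to 0$, the left-hand side is bounded above by $g(0)\,|C_\delta|$. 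On the right-hand side, the clean-ball property (Corollary~\ref{cleanball}) applied at $0 \in \partial F$ gives $|C_\delta \setminus F| \geq c|C_\delta|$ for a constant $c>0$ depending only on $n$, $\varphi$ and $\|f\|_\infty$, so combined with $g - f \geq \eps$ the right-hand side is at least $g(0)|C_\delta| + \tfrac{\eps c}{2}|C_\delta| + o(|C_\delta|)$. For $\delta$ small the two bounds are incompatible, the desired contradiction.

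The main technical obstacle is carrying out the perimeter estimate rigorously when $\partial G$ is only $\mathcal{C}^1$, not $\mathcal{C}^2$. The mollification $h_\delta$ and the $[(\delta - |x'|)^+]^2/\delta$ correction are designed precisely to transfer all differential computations to the smooth surface $\Sigma_\delta$, where the coercivity~\eqref{eq:coercivitydelta} applies verbatim, while the discrepancy between $\psi_\delta$ and $h$ only contributes $o(|C_\delta|)$ errors controlled by the modulus of continuity of $\nabla h$. The strict gap $f \leq g - \eps$ is exactly what absorbs those errors.
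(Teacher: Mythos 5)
Your scheme breaks down at its central estimate, namely that the sum of perimeter increments is bounded by $g(0)\,|C_\delta| + o(|C_\delta|)$. Test it on the model situation $\varphi=|\cdot|$, $G=\{x_n<0\}$, $g\equiv 0$, $f\equiv-\eps$, with $\partial F$ tangent to $\{x_n=0\}$ from below at $0$: here $h\equiv 0$, the cap $C_\delta$ has volume $\sim\lambda\delta^{n}$, and a direct computation of your two competitors gives $\bigl[\per_\varphi(F\cup C_\delta)-\per_\varphi(F)\bigr]+\bigl[\per_\varphi(G\setminus C_\delta)-\per_\varphi(G)\bigr]\approx 2\,\bigl|\{x'\,:\,\psi_\delta(x')>u_F(x')\}\bigr| + O(\lambda^2\delta^{n-1})$, a quantity of order $\delta^{n-1}$, not $o(|C_\delta|)$. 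The reason is structural: filling $F$ with the cap does not erase $\partial F$ below the cap but creates a doubled interface (the old top of $F$ and the new bottom of the cap both survive as boundary wherever they do not coincide), so the surface terms do not cancel at volume order and the resulting inequality is trivially satisfied --- no contradiction. The cancellation you need is not a ``first-variation of the cap''; in this type of argument it comes from a calibration/divergence-theorem step, exactly the one the paper performs: integrate $-\div\bigl(\nabla\varphi(\nabla \tilde d)\bigr)$ over the sliver between the smooth barrier and $\partial F$, where $\tilde d$ is the signed $\varphi$-distance to the barrier, and use $\nabla\varphi(\nabla \tilde d)\cdot\nu\ls\varphi(\nu)$ on $\partial F$ with equality on the barrier, together with minimality of $F$ against $F$ minus the sliver. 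Moreover \eqref{eq:coercivitydelta} is not available ``verbatim'': it rests on the strong convexity of $F_\eps=f_\eps\circ\varphi$ in Lemma \ref{lem:EL}, whereas the perimeter integrand $\varphi$ is $1$-homogeneous and degenerate; one only has uniform ellipticity of the nonparametric operator $p\mapsto-\div'(\nabla'\varphi(p,-1))$ on bounded slopes, which is why the paper proves a separate proposition for it.

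Two further gaps. First, $|C_\delta\setminus F|\gs c|C_\delta|$ does not follow from Corollary \ref{cleanball}: density estimates hold in balls centered on $\partial F$, not in a thin slab hugging $\partial G$, and indeed $C_\delta\setminus F$ can be empty --- nothing in the contradiction hypothesis prevents $F$ from coinciding with $G$ in a neighborhood of $x_0$ while $F\neq G$ elsewhere, and in that case your construction yields no information at all (both the lower bound on the right-hand side and the contradiction evaporate). The correct statement must produce the dichotomy ``coincide locally or strictly separated'', which is exactly what the paper obtains by an entirely different route: it writes the upper envelope $u(z')=\sup\{z_n:(z',z_n)\in F\}$, proves (via the Caffarelli--C\'ordoba-type distance argument sketched above) that $u$ is a viscosity subsolution of $-\div'(\nabla'\varphi(\nabla'u,-1))=f$ while the $\mathcal C^1$ graph of $\partial G$ is a supersolution with right-hand side $g\gs f$, and then applies the Giga--Ohnuma strict comparison principle for viscosity solutions. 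If you want to salvage a purely variational cap argument, you must build the calibration step into it; as written, the key inequality is asserted rather than proved, and it is false in general.
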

In what follows, for every $\tilde z \in \Rn$, we will denote by $\tilde z'$ the $n-1$ first component of $\tilde z$: $\tilde z= (\tilde z',\tilde z_n).$

\begin{rem}\begin{itemize}
\item When $f$ and $g$ are constant, we do not need $\eps$ to be positive (it can be zero).
\item If $F$ and $G$ are $\mathcal C^2$ surfaces, then the result is well known and relies on the classical strong maximum principle for elliptic equations. Indeed, the surfaces are locally graphs of functions that satisfy the prescribed mean curvature equation
$$- \div' \left( \nabla' \varphi(\nabla' u)\right) = f(x',u(x')),$$
which is known to be elliptic (see \cite{colding11} for details).
\item This theorem is already known when $f = g = 0$ in a more general version in \cite{solomon89} (in particular, the anisotropy can depend on the space variable, and the sets are only stationary, whereas they are minimizing in our framework).  Nonetheless, we present a simpler proof of this result, in the spirit of \cite{caffarelli93} (see also \cite{caffarelli10}).
\end{itemize}
 \end{rem}

We replace $\partial F$ by $\operatorname{supp}(D1_F)$ in order to work with a closed set. Let us assume that there exists $x_0 \in \partial F \cap \partial G$. We want to prove that it implies $F=G$. Since $\partial G$ is $\Cu$, $\partial G$ is the graph of some $\Cu$ function $\tilde v$ over $\tilde n^\perp$, with $\tilde n$ the outer normal to $G$ at $x_0$ (we may assume $x_0 = (0,\tilde v(0))$, $\tilde v(0) = 0$ and $v$ defined on $B'_{\tilde \rho}.$).

Thanks to Corollary \ref{cleanball}, for every $r$ sufficiently small, there exists a ball $B:=B_{qr}(x_r)$ of center $x_r$ and radius $qr$ with $B \subset  F \cap B_r(x_0).$ Since $\{\tilde z_n=0\}$ is tangent to $G$, $x_r$ must have a negative $n$-th component for $r$ small enough. Let $r_0$ satisfies this requirement and let $n= x_0 - x_{r_0}.$ Then, since $ \scal{n}{\tilde n} >0$, $\partial G$ is also a graph over $n^\perp$ of some function $v$ defined on $B'_\rho(0)$. Once again, we assume $v(0) = 0$ and denote $(z',z_n)$ the components of every $z \in \Rn$.
Then, we define
$$\forall |z'| \ls \rho, \quad u(z') := \sup \{z_n \in \R \, \vert \, (z',z_n) \in F\}.$$
Note that since $F \subset G$ and by definition of $u$, we must have $v \gs u$ on $B'_\rho.$

Moreover, $v$ is a $\mathcal C^1$ graph over $n^\perp$ on $B_\rho$, so it satisfies (in the variational sense, so also in the viscosity sense)
$$-\div' \left( \nabla' \varphi(\nabla' v,-1) \right) = g(x',v(x')).$$

\begin{prop}
 The function $u$ is upper semicontinuous and is a viscosity subsolution of 
\begin{equation} -\div' \left( \nabla' \varphi(\nabla' u,-1) \right) = f(x',u(x')). \label{viscss}\end{equation}
\end{prop}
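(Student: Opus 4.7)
\emph{Upper semicontinuity of $u$.} I would work with the closed representative $F=\spt(D 1_F)$. Since $F\subset G$ and $v$ is continuous, $u$ is uniformly bounded above on $B'_\rho$. If $z'_k\to z'_*$ in $B'_\rho$ with $u(z'_k)\to \ell$, choosing $z_n^k\in F$ close to $u(z'_k)$ produces a limit point $(z'_*,\ell)$ in the closed set $F$, whence $u(z'_*)\gs\ell$. This gives the USC property.

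\emph{Viscosity subsolution.} I would argue by contradiction, in the spirit of the proof of \eqref{eqanibound} in Lemma \ref{lem:EL}. Let $\psi\in\mathcal{C}^2$ satisfy $u\ls\psi$ on a neighborhood of $z'_0$ with equality at $z'_0$, and suppose the opposite strict inequality $-\div'(\nabla'\varphi(\nabla'\psi,-1))(z'_0)>f(z'_0,\psi(z'_0))$ holds. Because $\nabla'\varphi$ is smooth away from the origin, $(\nabla'\psi,-1)\neq 0$, and $f$ is continuous, this inequality persists with a positive gap $\delta$ on a small cylinder $B'_{\rho_0}(z'_0)\times(\psi(z'_0)-\sigma,\psi(z'_0)+\sigma)$. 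For small $t>0$ I would set $\psi_t:=\psi-t$ and introduce the competitor $F_t:=F\setminus W_t$, with $W_t:=F\cap\{z'\in B'_{\rho_0},\ z_n>\psi_t(z')\}$. By the clean-ball property (Corollary \ref{cleanball}), $W_t$ has positive measure and is compactly contained in the cylinder for $t$ small, so $F_t$ is an admissible local perturbation; minimality of $F$ then yields $\per_\varphi(F)-\per_\varphi(F_t)\ls -\int_{W_t}f\,dz\ (\star)$.

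The key remaining step is the matching lower bound. Setting $\Phi(q):=\varphi(q,-1)$, and using the local graph description $F\cap(B'_{\rho_0}\times\R)=\{z_n\ls u(z')\}$ (valid modulo a null set by the density estimates), one has $\per_\varphi(F)-\per_\varphi(F_t)=\int_{A_t}[\Phi(\nabla' u)-\Phi(\nabla'\psi_t)]\,dz'$ with $A_t:=\{u>\psi_t\}\subset B'_{\rho_0}$. I would then combine the strong convexity of $\Phi$ (inherited from that of $\varphi^2$, since $(\nabla'\psi,-1)$ stays away from $0$) with integration by parts on $A_t$ (the boundary integral vanishing because $u=\psi_t$ on $\partial A_t$) to obtain $\per_\varphi(F)-\per_\varphi(F_t)\gs\int_{A_t}(-\div'\nabla'\varphi(\nabla'\psi,-1))(u-\psi_t)\,dz'+\tfrac{\gamma}{2}\int_{A_t}|\nabla'(u-\psi_t)|^2\,dz'$. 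Plugging in the strict inequality $-\div'\nabla'\varphi>f+\delta$ and integrating in $z_n\in[\psi_t(z'),u(z')]$ produces $\per_\varphi(F)-\per_\varphi(F_t)>\int_{W_t}f+\delta|W_t|+\tfrac{\gamma}{2}\int_{A_t}|\nabla'(u-\psi_t)|^2\,dz'$, which together with $(\star)$ contradicts $\delta>0$. The hard part will be legitimizing the subgraph identification and the Green formula on $A_t$ when $u$ is only $BV$ a priori; both rest on the density estimates and the regularity theory for minimizers gathered in Section \ref{defiso}, and the quadratic gain from the strong convexity of $\Phi$ is what supplies the slack needed to close the contradiction even when $\int_{W_t}f$ has an unfavorable sign.
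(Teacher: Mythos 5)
Your upper semicontinuity argument is the same as the paper's and is fine. The viscosity-subsolution part, however, has a genuine gap exactly at what you call the ``key remaining step.'' The identity $\per_\varphi(F)-\per_\varphi(F_t)=\int_{A_t}\left[\Phi(\nabla' u)-\Phi(\nabla'\psi_t)\right]dz'$ presupposes two things that are not available at this stage: (i) that inside the cylinder $F$ coincides (up to a null set) with the subgraph $\{z_n\ls u(z')\}$, and (ii) that $u$ has enough Sobolev regularity for $\nabla' u$, the Green formula on $A_t$, and the quadratic term $\int_{A_t}|\nabla'(u-\psi_t)|^2$ to make sense. Neither follows from the density estimates or the clean-ball property: $u$ is only the upper envelope of $F$ and is merely upper semicontinuous, and $F$ may a priori have holes or additional structure strictly below the graph of $u$, whose boundary contributes to $\per_\varphi(F)$ but is invisible in your graph formula; likewise the new interface created by removing $W_t$ is the portion of the graph of $\psi_t$ lying in the essential interior of $F$, whose projection can be strictly smaller than $A_t$. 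Two further problems: for an USC function $u$ the set $A_t=\{u>\psi_t\}$ need not be open and $u$ need not equal $\psi_t$ on $\partial A_t$, so the boundary term you discard in the integration by parts does not obviously vanish; and $p\mapsto\varphi(p,-1)$ has linear growth, so its strong convexity constant is uniform only on bounded gradient sets (this is exactly the content of the last proposition of Section \ref{onesmooth}), while $\nabla'u$ is not known to be bounded, so the uniform quadratic gain you invoke is not justified.

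The paper's proof is designed precisely to avoid representing $\per_\varphi(F)$ as a graph energy. It introduces the signed $\varphi$-distance $d$ to the graph $\Gamma$ of $\psi$, proves the pointwise identity \eqref{lapdist} relating the graph operator to $-\div(\nabla\varphi(\nabla d))$, translates $\Gamma$ down by $\delta e_n$ so that the excised region $(\tilde\Omega-\delta e_n)\cap F$ is a compact perturbation with nonempty interior (this is the only use of Corollary \ref{cleanball}), and then compares two inequalities: the divergence theorem applied to the calibration-like field $\nabla\varphi(\nabla\tilde d)$ on that region (first for smooth $F$, then for general $F$ by approximating $1_{F_n}\to 1_F$ in $BV$ with $\eta$ independent of $n$), using $\nabla\varphi(\nabla\tilde d)\cdot\nu\ls\varphi(\nu)$ with equality on the translated graph; and the minimality inequality for the competitor $F$ minus that region. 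Subtracting gives $\int\eta\ls 0$, the contradiction, with no subgraph identification, no Sobolev regularity of $u$, and no strong convexity needed. To repair your argument you would essentially have to first establish the subgraph and $W^{1,1}$ structure of $F$ near the touching point, which is not provided by Section \ref{defiso}; otherwise you should switch to the distance-function and divergence-theorem device of the paper.
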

\begin{proof}
 Let us first prove that $u$ is upper semicontinous. Let $x_n \to x \in B'_\rho.$ Then, we have a sequence $(x_k,u(x_k)) \in F$, which is bounded above. As a result, there exists a subsequence (still denoted by $(x_k,u(x_k))$) which converges (possibly $u(x_k) \to -\infty$). We want to show that $u(x) \gs \limsup_k u(x_k).$ If $u(x_k) \to - \infty$, nothing has to be done. If not, then $(x_k,u(x_k))$ is a converging sequence of $F$ which is closed. So, $(x,\lim u(x_k)) \in F$ and $u(x) \gs \limsup_k u(x_k).$ 

Now, let us show that it is a subsolution of \eqref{viscss}. Assume by contradiction that it is not. Then, there exist a smooth function $\psi$ and some $x_1 \in B'_\rho$ such that $u-\psi$ has a maximum at $x_1$ and
$$ -\div'\left( \nabla'\varphi'(\nabla' \psi,-1) \right) > f.$$
On can assume that $x_1 = 0$ and $u(x_1) = \psi(x_1)$ and that the maximum is strict. Let $\Gamma$ be the graph of $\psi.$ We want to generalize the result by Caffarelli and Cordoba \cite{caffarelli93}, which showed that the signed distance to an area minimizing hypersurface is superharmonic. To this aim, we work with the $\varphi$-relative distance 
$$d_\varphi(x,y) = \varphi^\circ (x-y) \quad \text{where} \quad \varphi^\circ (x) = \sup_{\varphi(\nu) \ls 1} \scalv{x}{\nu}$$ and
$$d^\Gamma_\varphi(x)=\inf\{d_\varphi(x,y) \, \mid \, y\in \Gamma\}.$$
Then, we defined the signed $\varphi$-relative distance to $\Gamma$ by setting
$$d(x',x_n) = d^\Gamma_\varphi(x',x_n) 1_{\{x_n \ls \psi(x)\}} - d^\Gamma_\varphi(x',x_n) 1_{\{x_n \gs \psi(x')\}}.$$
Since $\Gamma$ and $\varphi$ are smooth, there exists a tubular neighborhood of $\Gamma$ where $d$ is smooth. 

\begin{lem}
We have
 \begin{equation}
 -\div'(\nabla'\varphi(\nabla'\psi,-1))(0) = -\div(\nabla \varphi (\nabla d)) (0,0). \label{lapdist}
\end{equation}
\end{lem}
\begin{proof}
Let us first notice that $d (x',\psi(x')) = 0$, so that $\nabla' d + \partial_n d \nabla' \psi = 0$. Hence, since $\nabla \varphi$ is $0$-homogeneous and even, we get
$$\frac{\partial \varphi}{\partial x_i}(\nabla' \psi, -1)=\frac{\partial \varphi}{\partial x_i}\left(-\frac{\nabla' d}{\dpa{d}{n}} , -1 \right) = \frac{\partial \varphi}{\partial x_i}\left(\nabla' d,\dpa{d}{n}\right) = \frac{\partial \varphi}{\partial x_i}(\nabla d(x',\psi(x')).$$
As a matter of fact,
\begin{align*}
\div'(\nabla'\varphi(\nabla' \psi,-1)) &= \div'(\nabla' \varphi(\nabla d(x',\psi(x')) = \sum_{i=1}^{n-1} \frac{\partial}{\partial x_i} \left(\dpa{\varphi(\nabla d(x',\psi(x')))}{i} \right) \\
&= \sum_{i=1}^{n-1} \sum_{j=1}^n \dpp{\varphi}{i}{j} \frac{\partial }{\partial x_i} \left( \dpa{d(x',\psi(x'))}{j} \right) \\
&= \sum_{i=1}^{n-1} \sum_{j=1}^n \dpp{\varphi}{i}{j} \left( \dpp{d}{i}{j} + \dpp{d}{j}{n} \dpa{\psi}{i} \right).
\end{align*}
As a result,
\begin{align*}
\div(\nabla \varphi(\nabla d) &= \sum_{i=1}^n \frac{\partial }{\partial x_i} \left( \dpa{\varphi(\nabla d)}{i} \right) = \sum_{i,j=1}^n \dpp{\varphi(\nabla d)}{i}{j} \dpp{d}{i}{j} \\
&=\sum_{i=1}^{n-1}\sum_{j=1}^n \dpp{\varphi(\nabla d)}{i}{j} \dpp{d}{i}{j} + \sum_{j=1}^n \dpp{\varphi}{n}{j} \dpp{d}{n}{j} \\
&= \div'(\nabla'\varphi(\nabla' \psi,-1)) + \sum_{j=1}^n \dpp{\varphi}{n}{j} \dpp{d}{n}{j} - \sum_{i=1}^{n-1} \sum_{j=1}^n \dpp{\varphi}{i}{j} \dpp{d}{n}{j} \dpa{\psi}{i} \\
&=\div'(\nabla'\varphi(\nabla' \psi,-1)) + \sum_{j=1}^n \dpp{d}{n}{j} \left( \sum_{i=1}^{n-1} \dpp{\varphi}{i}{j} \dpa{\psi}{i} - \dpp{\varphi}{n}{j} \right) \\
&= \div'(\nabla'\varphi(\nabla' \psi,-1)) - \sum_{j=1}^n \dpp{d}{n}{j} \left(  \sum_{i=1}^{n-1} \dpp{\varphi}{i}{j} \frac{\dpa{d}{i}}{\dpa{d}{n}} + \dpp{\varphi}{n}{j} \right) \\
&=\div'(\nabla'\varphi(\nabla' \psi,-1)) - \sum_{j=1}^n \dpp{d}{n}{j} \frac{1}{\dpa{d}{n}} \sum_{i=1}^n \dpp{\varphi}{i}{j} \dpa{d}{i}.
\end{align*}

Let us show that the last term of the last equality vanishes. Indeed, one has $$\varphi^\circ(\nabla \varphi(\nabla d)) =1,$$ whose derivative provides
\begin{equation}\forall i \ls n,\quad \sum_{j=1}^n \dpa{\varphi^\circ(\nabla \varphi(\nabla d))}{j} \cdot \frac{\partial }{\partial x_i} \left( \dpa{\varphi(\nabla d)}{j}\right) = 0.\label{phiphic}\end{equation}
In addition, thanks to the equality (which holds for any anisotropy) $(\varphi^\circ \nabla \varphi^\circ) (\varphi(\xi) \nabla \varphi(\xi))=\xi$, one obtains $\nabla \varphi^\circ (\nabla \varphi(\xi)) = \frac{\xi}{\varphi(\xi)}$. 
Then, \eqref{phiphic} can be rewritten
$$\forall i,\quad \sum_{j,k=1}^n \frac{1}{\varphi(\nabla d)}\dpa{d}{j} \dpp{d}{i}{k} \dpp{\varphi}{j}{k} =0,$$
which implies for $i=n$ (and some changes of indices)
$$\forall i, \quad \sum_{j=1}^n \dpp{d}{n}{j} \left(\sum_{i=1}^n \dpa{d}{i} \dpp{\varphi}{i}{j} \right)=0,$$
what was expected.
\end{proof}

Let $\delta$ be small and fixed. Let $\tilde \Omega$ be the epigraph of $\psi$ (we have $\partial \tilde \Omega = \Gamma).$ We are interested in $(\tilde \Omega - \delta e_n)\cap B'_\rho$. Then, if $\delta$ is small enough,
\begin{itemize}
 \item $F \setminus (\tilde \Omega - \delta e_n) \cap B'_\rho$ is a compact perturbation of $ F$ in $B_\rho$ (the maximum is strict).
\item $(\tilde \Omega - \delta e_n) \cap F$ has a nonempty interior in $F$ (clean ball property).
\item If $\tilde d = d(\cdot + \delta e_n)$, we have $-\div(\nabla \varphi(\nabla \tilde d)) \gs f+\eta$ ($\eta >0$) in $(\tilde \Omega - \delta e_n) \cap F$ (continuity of $d$ and \eqref{lapdist}).
\end{itemize}
Let $\Omega = (\tilde \Omega- \delta e_n) \cap F,$. If $F$ were $\mathcal C^2$, we would have
$$
 \int_\Omega -\div(\nabla \varphi(\nabla \tilde d)) = -\int_{(\Gamma - \delta e_n)\cap F} (\nabla \varphi (\nabla \tilde d)) \cdot \nu_\Omega d\sigma - \int_{\partial F \cap \Omega } (\nabla \varphi(\nabla \tilde d)) \cdot \nu_\Omega d\sigma
$$
which yields, using $-\div(\nabla \varphi(\nabla \tilde d)) \gs f+\eta$ and noting that on $\partial F$, $\nu_\Omega = \nu_F$, we obtain
$$
 -\int_{\Gamma \cap (F + \delta e_n)} (\nabla \varphi (\nabla d)) \cdot \nu_\Gamma d\sigma + \int_{\partial F \cap \Omega} (\nabla \varphi(\nabla \tilde d)) \cdot \nu_F d\sigma \gs \int_\Omega f+\eta.
$$
Recall that $F$ is minimizing, we can also write (comparing $F$ to the compact perturbation $F \setminus \Omega$)
$$ \int_\Omega f \ls \int_{\Gamma \cap (F + \delta e_n)} \varphi(\nu) d\sigma - \int_{\partial F \cap \Omega} \varphi(\nu) d\sigma.$$
Substracting the second inequality to the first one, we obtain
\begin{multline*} \int_\Omega \eta \ls -\int_{\Gamma \cap (F + \delta e_n)} \varphi(\nu) d \sigma + \int_{\partial F \cap \Omega} \varphi(\nu) d\sigma  \\ -\int_{\Gamma \cap (F + \delta e_n)} (\nabla \varphi (\nabla d)) \cdot \nu_\Gamma d\sigma + \int_{\partial F \cap \Omega} (\nabla \varphi(\nabla \tilde d)) \cdot \nu_F d\sigma.
\end{multline*}
Now, note that on $\Gamma$, we have $\nabla d = \frac{\nu}{\varphi(\nu)}$. On the other hand, $\nabla \varphi (\nu) \cdot \nu = \varphi(\nu)$ (because of the homogeneity of $\varphi$), which implies $\nabla \varphi(\nabla d) = \varphi(\nu)$ on $\Gamma$. We can then compute
$$ \int_{\Gamma \cap (F + \delta e_n)} (\nabla \varphi (\nabla d)) \cdot \nu_\Gamma d\sigma = -\int_{\Gamma \cap (F + \delta e_n)} \varphi(\nu) d\sigma.$$
In addition, since $\varphi^\circ (\nabla \varphi (\nabla d)) = 1$, we also have $\nabla \varphi (\nabla d)\cdot \nu \ls \varphi(\nu).$ That implies
$$\left \vert \int_{\partial F \cap \Omega} (\nabla \varphi(\nabla \tilde d)) \cdot \nu_F \right \vert \ls \int_{\partial F \cap \Omega} \varphi(\nu_F)d\sigma.$$

These two relations yield
$$\int_\Omega \eta \ls 0$$
which is not possible.

If $F$ is not regular, we select a sequence of $F_n \to F$ with $F_n$ regular and $1_{F_n} \to 1_F$ in $BV$ and we reproduce this construction on $F_n$ and pass to the limit (note that $\eta$ does not depend on $n$).
\end{proof}

At this stage, $u$ is a viscosity subsolution of
$$-\div'\left( \nabla'\varphi(\nabla' u,-1) \right) = f(x',u(x'))$$ whereas $v$ is a viscosity supersolution of 
$$-\div'\left( \nabla'\varphi(\nabla' v,-1) \right) = g(x',v(x')) \gs f(x',u(x')).$$
So, $v$ is also a supersolution of \eqref{viscss}. We also know that $v\gs u.$ We would like to prove that $v > u$, because that would ensure that $\partial F \cap \partial G = \emptyset.$ So, we need a strict comparison principle for viscosity solutions. This is found in \cite{giga05}. Let us check that the assumptions are fullfiled. This article deals with an equation written as (see \cite[Remark 3.6]{giga05} for the right hand side)
$$ F(Du,D^2u) = h $$
with $F$ satisfying
\begin{enumerate}
 \item The function $F : \Rn \times \mathcal S_n \to \R$ is continuous,
\item There exists a coercive function $w$ such that for all $p,X,Y$,
$$ F(p,X) - F(p,Y) \gs w(p,X-Y),$$
\item For every $M,K >0$ and $ |q|,|\tilde q| \ls K$, $\Vert X \Vert \ls M$, one has
$$|F(q,X) - F(\tilde q, X) | \ls L_{M,K} |q-\tilde q|.$$
\end{enumerate}
Here, we have 
$$F(p,X)= \sum_{i,k=1}^{n-1} \dpp{\varphi}{i}{k} (p,-1) X_{ik} = \tr(D'^2 \varphi(p,-1) X).$$
It is clearly continous.
\begin{itemize}
 \item If $p,q \in \Rn$ such that $|p|,|q| \ls M$, if $X \in \mathcal S^n$ satisfies $|X| \ls K$, one obtains
$$|F(p,X)-F(q,X)| =  \left \vert \sum_{i,k} \left( \dpp{\varphi}{i}{k} (p,-1) - \dpp{\varphi}{i}{k} (q,-1) \right) X_{ik} \right \vert \ls L_{M,K} |p-q|.$$
\item Let $p \in \Rn$ with $|p|\ls M$ and $X,Y \in \mathcal S_n$ such that $X \ls Y$.\\
The assumption on $\varphi$ imply that $p \mapsto \varphi(p,-1)$ is uniformly convex with constant $\lambda(M)$ on every $\{|p| \ls M\}$ (see the proposition below)
As a result, one has
$$\lambda(M) \tr(Y-X) \ls F(p,X)-F(p,Y)=\sum_{i=1}^{n-1} \dpc{\varphi}{i}(p,-1) \lambda_i \ls \Lambda \tr(Y-X)$$
with $\Lambda$ is the maximum of the spectral radius of $D^2\varphi^2(q)$ for $q=1$.
\end{itemize}
Hence, \cite[Th. 3.1]{giga05} applies and gives the following alternative: either $u=v$ on $B_\rho$ or $u<v$. That is exactly Theorem \ref{thmsmooth}.

Finally, note that in the framework of \eqref{arof}, we have $f <g - \eps$ so $F$ and $G$ cannot coincide.

During the proof, we used the
\begin{prop}
The function $\tilde \varphi : p \mapsto \varphi(p,-1)$ is uniformly convex on $\{|p| \ls M \}$, with a constant $\lambda(M).$
\end{prop}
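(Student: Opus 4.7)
The plan is to establish a uniform lower bound $D^2 \tilde\varphi(p) \gs \lambda(M) I_{n-1}$ for all $p$ with $|p| \ls M$. The starting observation is that for $q \in \R^{n-1}$,
$$\scalv{D^2 \tilde\varphi(p) q}{q} = \scalv{D^2 \varphi(p,-1)(q,0)}{(q,0)},$$
so the task reduces to a lower bound for $D^2\varphi(x)$ at $x = (p,-1)$ (whose Euclidean norm lies between $1$ and $\sqrt{M^2+1}$) applied to vectors of the special form $(q,0)$.

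To exploit the strong convexity of $\varphi^2$, I would use the identity $D^2(\varphi^2)(x) = 2\,\nabla\varphi(x) \otimes \nabla\varphi(x) + 2\,\varphi(x) D^2\varphi(x)$, which gives
$$\scalv{D^2\varphi(x) v}{v} = \frac{1}{2\varphi(x)} \Bigl( \scalv{D^2(\varphi^2)(x) v}{v} - 2\scalv{\nabla\varphi(x)}{v}^2 \Bigr).$$
The principal obstacle is that the $1$-homogeneity of $\varphi$ makes $\nabla\varphi$ zero-homogeneous, so $D^2\varphi(x) \cdot x = 0$ and one cannot hope for $D^2\varphi$ to be positive in every direction---strong convexity of $\varphi^2$ alone misses the radial direction. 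To bypass this, I would replace $v = (q,0)$ by $w := v - \alpha x$ with $\alpha := \scalv{\nabla\varphi(x)}{v}/\varphi(x)$ (well-defined by Euler's identity $\scalv{\nabla\varphi(x)}{x} = \varphi(x)$). This choice of $\alpha$ makes the gradient term vanish, and since $x \in \ker D^2\varphi(x)$, one has $\scalv{D^2\varphi(x) v}{v} = \scalv{D^2\varphi(x) w}{w}$. Writing $\mu$ for the strong convexity constant of $\varphi^2$, this produces
$$\scalv{D^2\tilde\varphi(p) q}{q} \gs \frac{\mu |w|^2}{2\varphi(x)} \gs \frac{\mu |w|^2}{2B\sqrt{M^2+1}}.$$

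The last step, which is elementary but carries the dependence on $M$, is to bound $|w|^2$ below by a multiple of $|q|^2$. Writing $w = (q - \alpha p, \alpha)$, I would split according to the size of $\alpha$: if $|\alpha| \ls |q|/(2M)$, then $|q - \alpha p| \gs |q|/2$ and $|w|^2 \gs |q|^2/4$; while if $|\alpha| \gs |q|/(2M)$, then $|w|^2 \gs \alpha^2 \gs |q|^2/(4M^2)$. Both cases yield $|w|^2 \gs c(M) |q|^2$ with $c(M) = \min(1/4, 1/(4M^2))$, and substituting above produces the constant $\lambda(M) = \mu\,c(M)/(2B\sqrt{M^2+1})$, concluding the proof.
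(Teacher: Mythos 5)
Your proof is correct and follows essentially the same route as the paper: you split off the radial direction (on which $D^2\varphi$ vanishes by $0$-homogeneity of $\nabla\varphi$) by projecting $(q,0)$ along $\operatorname{span}(p,-1)$ onto $\nabla\varphi(p,-1)^\perp$, apply the strong convexity of $\varphi^2$ to the projected vector $w$, and then show this projection is non-degenerate uniformly in $|p|\ls M$. The only difference is in the last step, where you use an explicit two-case estimate on $|w|=|(q-\alpha p,\alpha)|$ instead of the paper's angle argument based on the convexity of the Wulff shape, which has the small advantage of producing an explicit constant $\lambda(M)$.
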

\begin{proof}
First, recall a few properties of the anisotropy $\varphi.$ By assumption, the sets $\{\varphi \ls t\}$ (Wulff shape of radius $t$) are homothetic convex subsets which contain a neighborhood of zero. In addition,  $D^2\varphi^2 \gs \alpha I.$ Noticing that
$$ D^2 \varphi = \frac{1}{\varphi} D^2 \varphi^2 - \frac{1}{\varphi} \nabla \varphi \otimes \nabla \varphi,$$
we see that $D^2 \varphi$ is positive definite on $T(p,-1)$, the tangent plane to the Wulff shape $\{\varphi = \varphi(p,-1)\}$ at $(p,-1),$ with eigenvalues bigger than $\frac{\alpha}{\varphi(p,-1)}.$\\
Finally, 

Since $\varphi$ is smooth around $(p,-1)$, to prove the proposition, we only have to control the eigenvalues of $D^2 \tilde \varphi (p) = \left. D^2 \varphi (p,-1) \right \vert_{\{(e,0)\}}$. Let us write $e = e^T + e^0$ the decomposition of $e$ with respect to $\nabla \varphi(p,-1)^\perp$ and $\operatorname{span}(p,-1)$ (note that this projection is not orthogonal). Then,
$$D^2 \varphi(p,-1) \cdot (e,e) = \underbrace{D^2 \varphi(p,-1) \cdot (e^T,e^T)}_{\gs \alpha |e^T|^2} + \underbrace{2D^2 \varphi(p,-1) \cdot (e^0,e^T) + D^2 \varphi(p,-1) \cdot (e^0,e^0)}_{=0 \text{ since } D^2 \varphi(p,-1) \cdot e^0 = 0}.$$
To conclude, we need to show that there exists a constant $\gamma(M)$ such that $|(e,0)^T| \gs \gamma(M) |(e,0)|$ as soon as $|p| \ls M$. Since there is an angle between $(e,0)$ and $(p,-1)$ which remains far from 0 on $\{|p| \ls M\}$, this is equivalent to show that the norm of the projection is controlled, or to show that the angle between $(p,-1)$ and $\nabla \varphi(p,-1)$ remains far from $\frac \pi 2.$ This is true using that the Wulff shape is a convex set which contains a neighborhood of zero.

Finally, $D^2 \varphi$ is uniformly convex with constant $\frac{\alpha \gamma(M)}{\beta(M)}$ where $\beta(M) = \min\limits_{|p| \ls M} \varphi(p,-1).$

\end{proof}

\section{A result on mean convex domains with Dirichlet conditions}
\label{secjer}

In this section, we link the minimizer $u$ to the image $g$ using Dirichlet conditions on the boundary of the domain in the spirit of a recent work \cite{jerrard13} (see also \cite{Mar83} and \cite{Ray93} for previous works in this direction). To give the assumptions on $\Omega$, we need the 
\begin{defi}
Let $\varphi$ be a norm in $\Rn$. We say that $\Omega$ satisfies the barrier condition if for every $x_0 \in \partial \Omega$ and $\eps >0$ sufficiently small, if $V$ minimizes $\per_\varphi$ in
$$\{W \subset \Omega \, \mid \, W \setminus B_\eps(x_0) = \Omega \setminus B_\eps(x_0)\},$$
then 
$$\partial V^{(1)} \cap \partial \Omega \cap B_\eps(x_0) = \emptyset.$$
\label{barjer}
\end{defi}
\begin{rem}
The barrier condition means that $\partial \Omega$ is not a local minimizer of the perimeter (there is always a inside perturbation of $\Omega$ which provides a set with strictly smaller perimeter). Note that if $\varphi$ is the Euclidean norm and $\Omega$ is smooth, this property is the strict mean-convexity of $\Omega.$ (positive mean curvature).
\end{rem}

\begin{thm}
 Let $\varphi$ be a norm in $\Rn$ which is $\mathcal C^2$ in $\Rn \setminus \{0\}$ and such that $\varphi^2$ is strongly convex. Let also $\Omega$ be a bounded Lipschitz open subset which satisfies the barrier condition and $g$ be continuous on $\partial \Omega$. Then, there is a unique minimizer $u$ of 
\begin{equation} u = \argmin_{\substack{v \in BV \\ v = g \text{ on } \partial \Omega}} \int_{\Omega} \varphi(\nabla u)\label{eqjer}\end{equation}
where the equality $v=g$ on $\partial \Omega$ means, as in \cite{jerrard13}, that 
\begin{equation} \forall x \in \partial \Omega , \; \lim_{r \to 0} \esup_{\substack{y \in \Omega \\ |x-y| \ls r}} |v(y) - g(x)| = 0.\label{eqconb}\end{equation}
In addition, this minimizer is continuous.
\label{thmjer}
\end{thm}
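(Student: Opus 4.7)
The plan is to mimic Miranda's scheme of Theorem \ref{thmmiranda65}, replacing the strict convexity (which gave Proposition \ref{jercomp} for free) by a geometric comparison principle obtained level-by-level via the coarea decomposition, where the barrier condition of Definition \ref{barjer} plays the role of a boundary maximum principle. Existence of a minimizer comes from the direct method: since $A|\cdot| \ls \varphi \ls B|\cdot|$, a minimizing sequence (truncated at $\pm \|g\|_\infty$) is bounded in $BV(\Om)$, and lower semi-continuity of $u \mapsto \int_\Om \varphi(\nabla u)$ yields a candidate limit $u$. The delicate point is that $u$ actually attains $g$ in the strong sense \eqref{eqconb}; this is guaranteed by the barrier condition, because by the anisotropic coarea formula each superlevel $E_t = \{u>t\}$ minimizes $\per_\varphi(\cdot;\Om)$ among sets with boundary trace $\{g>t\}\cap \partial \Om$, and Definition \ref{barjer} precisely forbids $\partial E_t^{(1)}$ from sitting on $\partial \Om$ at points where $g \ne t$.

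The heart of the proof is the following comparison principle, replacing Proposition \ref{jercomp}: if $u_1,u_2$ are minimizers of \eqref{eqjer} with boundary data $g_1 \ls g_2$, then $u_1 \ls u_2$ a.e. The argument is geometric: by submodularity \eqref{addperv}, for every $t$ the sets $E_t^{u_1} \cap E_t^{u_2}$ and $E_t^{u_1} \cup E_t^{u_2}$ are again perimeter minimizers with boundary traces $\{g_1>t\}\cap\partial\Om$ and $\{g_2>t\}\cap\partial\Om$ respectively. If the inclusion $E_t^{u_1} \subset E_t^{u_2}$ fails in $\Om^{(1)}$, then $E_t^{u_1}\cup E_t^{u_2} \supsetneq E_t^{u_2}$ in the interior, and the extra piece, being a competitor in the obstacle problem for $E_t^{u_2}$, must (by the clean-ball Corollary \ref{cleanball} together with the barrier condition) meet $\partial \Om$ at a point where $g \ne t$, contradicting the prescribed trace. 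Uniqueness of the minimizer is the case $g_1 = g_2$.

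Boundary continuity follows directly from the barrier condition and continuity of $g$: given $x_0 \in \partial \Om$ and $\eps>0$, choose $\delta$ so that $|g-g(x_0)| < \eps$ on $\partial \Om \cap B_\delta(x_0)$. Comparing $u$ with the minimizer associated to the boundary data obtained by capping $g$ at $g(x_0) \pm \eps$ on $B_\delta(x_0)$ and using the comparison principle above together with the barrier condition on the small cap, one finds $|u - g(x_0)| \ls \eps + o_r(1)$ on $\Om \cap B_r(x_0)$ as $r \to 0$, which is exactly \eqref{eqconb} plus a quantitative boundary modulus $\omega_\partial$.

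Interior continuity is then obtained by Miranda's translation trick, now rescued by the comparison principle above. For $z \in \Rn$ small, the translate $u_z(x) := u(x-z)$ minimizes \eqref{eqjer} on $\Om + z$, so both $u$ and $u_z$ are minimizers on $\Om_z := \Om \cap (\Om+z)$, whose boundary $\partial \Om_z$ inherits the barrier condition locally from $\partial \Om$ and $\partial(\Om + z)$. Applying the comparison principle on $\Om_z$,
\begin{equation*}
\esup_{\Om_z}\,(u - u_z) \ls \esup_{\partial \Om_z}\,(u - u_z) \ls \omega_\partial(|z|) \xrightarrow[|z|\to 0]{} 0,
\end{equation*}
where the last bound uses the boundary modulus from the previous step. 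The symmetric inequality yields uniform continuity of $u$ on $\overline \Om$. The hard part is clearly the comparison principle, since Miranda's strict-convexity shortcut is unavailable; crucially, the entire argument avoids any appeal to regularity of level-sets, which is precisely what lets the conclusion hold in every dimension and bypass the $n \ls 3$ restriction of \cite{jerrard13}.
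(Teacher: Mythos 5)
Your overall plan (comparison principle plus Miranda's translation trick) is the route the paper itself mentions as feasible \emph{if} one takes the comparison principle from \cite{jerrard13} as a black box, but your proof of that comparison principle --- which you rightly identify as the heart of the matter --- has a genuine gap. After the submodularity exchange \eqref{addperv}, what you have is that $E_t^{u_1}\cup E_t^{u_2}$ and $E_t^{u_2}$ are two \emph{nested minimizers of the same geometric problem} (same exterior datum $\{g_2>t\}$). Concluding that they coincide is a uniqueness statement for the geometric obstacle problem, and nothing you invoke delivers it: Corollary \ref{cleanball} gives interior density estimates, and the barrier condition of Definition \ref{barjer} only constrains the behaviour of minimizing boundaries \emph{at} $\partial\Omega$; neither excludes a compactly contained ``bubble'' $(E_t^{u_1}\cup E_t^{u_2})\setminus E_t^{u_2}$ bounded by two distinct $\varphi$-area-minimizing hypersurfaces with the same trace (geometric minimizers with prescribed exterior datum are in general non-unique). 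Your sentence ``the extra piece must meet $\partial\Omega$ at a point where $g\neq t$'' is exactly the missing argument. Establishing this comparison/uniqueness is the main content of \cite{jerrard13} (after \cite{SteWilZie92}) and requires a fine analysis of the regular parts of the minimizing boundaries. The paper avoids a function-level comparison principle altogether: it proves continuity by showing directly that $\partial\es\cap\partial\et=\emptyset$ for $s<t$, via translation invariance (translating $E_t^{(1)}$ so that two regular points of the two boundaries coincide), Lemma \ref{twomin} plus the strict comparison principle for $\varphi$-minimal graphs \cite{giga05} at regular points, a propagation argument showing the regular set is locally a family of parallel hyperplanes filling a ball (hence the touching point is regular), and finally a truncation argument excluding compact connected components of $\partial\es$; the separation near $\partial\Omega$ comes from Lemma \ref{lembord}. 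Uniqueness is then obtained by running the same argument with level sets of two distinct minimizers.

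There are secondary gaps in the translation step as well. That $\Omega_z=\Omega\cap(\Omega+z)$ ``inherits'' the barrier condition is asserted, not proved; Definition \ref{barjer} is a global variational condition on perimeter minimizers in small balls and its stability under intersection of domains is not obvious. More seriously, on the portion of $\partial\Omega_z$ lying inside $\Omega$ the datum of $u$ is merely its $BV$ trace, which at that stage of the proof is neither continuous nor attained in the strong sense \eqref{eqconb}, so the hypotheses of your comparison principle (continuous boundary data attained as in \eqref{eqconb}, barrier condition) are not verified on $\Omega_z$. Finally, the boundary attainment step you present as immediate from Definition \ref{barjer} is itself a nontrivial theorem, namely \cite[Th.~1.1]{jerrard13}, which the paper quotes as Lemma \ref{lembord}. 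In short, the skeleton is coherent and close to the alternative route acknowledged in the paper, but as a self-contained proof it leaves its central step --- the comparison principle --- unproved, and the interior-continuity step applies that principle outside the hypotheses under which you formulated it.
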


\begin{rem}
Since $\varphi$ is not strictly convex as in \cite{miranda65} (because of the homogeneity), we have to find another way to obtain something similar to \cite[Th. 2.1]{miranda65}. 
This is done in \cite{jerrard13}, which provides such proposition in the case we deal with. Proceeding as in \cite{miranda65}, we could directly complete the proof (note that due to the space dependency, Jerrard et al. can obtain continuity of the minimizer only in dimension $\ls 3$, using the regularity of the level-sets of $u$: see \cite{SchSimAlm77} for details). Nonetheless, since we can take advantage of the translation invariance of the minimizers (which does not exists in \cite{jerrard13} because of the space dependency), we give a much simpler proof of the continuity of $u$. In particular, we will use no deep results neither on topological dimension nor on connected components of regular points of a minimal surface.
\end{rem}
For simplicity, we assume that $g$ is defined and continuous on the whole $\Rn.$

We first recall the proof of the existence part of the theorem (it is already done in \cite{jerrard13}). Let $u$ be a minimizer of \eqref{eqjer} in the class
$$\mathcal A_f := \{v \in BV(\Rn) \, \vert \, v = g \text{ on } \Omega^c \}.$$
It exists by standard techniques of calculus of variation. 

We recall that thanks to the coarea formula (used similarly as in Proposition \ref{proparof}), the level sets $\et$ minimize
\begin{equation}\et = \argmin_{E \setminus \Omega = F_t \setminus \Omega} \per_\varphi(E),\label{eqperjer}\end{equation}
with $F_t  := \{g >t\},$ where the exponent $(1)$, as before, means that we consider the subset of points with density one:
$$F^{(1)} := \left \{ x \in \Omega\, \middle | \, \lim\limits_{r \to 0}  \frac{|B_r(x) \cap F|}{|B_r|} = 1 \right \}.$$
We recall that thanks to Proposition \ref{propdensone}, $\et$ are open subsets.

To show that \eqref{eqconb} is in fact satisfied by $u$, we recall the following lemma (which is simply a restatement of \cite[Th. 1.1]{jerrard13}).
\begin{lem}
Let $\hat x \in \partial \Omega$ and let $t$ and $\eps$ such that $g(\hat x) \ls t-\eps$. Then, there exists $\rho >0$ such that
$$\et \cap B_\rho(\hat x) = \emptyset.$$ 
The same result holds for $g(\hat x) \gs t+\eps$.
\label{lembord}
\end{lem}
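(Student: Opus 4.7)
The plan is to combine the continuity of $g$ with the Dirichlet boundary condition on $\et$ and the barrier condition on $\Omega$ to forbid any accumulation of $\et$ at $\hat x$. By continuity of $g$, the hypothesis $g(\hat x) \ls t-\eps$ yields some $r_0 > 0$ with $g < t - \eps/2$ on $B_{r_0}(\hat x)$, whence $F_t \cap B_{r_0}(\hat x) = \emptyset$. Since the admissibility condition in \eqref{eqperjer} reads $\et \setminus \Omega = F_t \setminus \Omega$, this forces $\et \cap B_{r_0}(\hat x) \subset \Omega$: near $\hat x$, the level set has no exterior trace and can only approach $\hat x$ from the inside of $\Omega$.

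Next, I argue by contradiction, assuming that $\et \cap B_\rho(\hat x) \neq \emptyset$ for every $\rho > 0$ (equivalently $\hat x \in \overline{\et}$, since $\et$ is open by Proposition \ref{propdensone}). Fix $\eps \in (0, r_0)$ small and invoke the barrier condition at $x_0 = \hat x$: there exists $V \subset \Omega$ minimizing $\per_\varphi$ in $\{W \subset \Omega : W = \Omega \text{ off } B_\eps(\hat x)\}$ with $\partial V^{(1)} \cap \partial \Omega \cap B_\eps(\hat x) = \emptyset$. Thus $V$ strictly shrinks $\Omega$ inside $B_\eps(\hat x)$: the open set $U := (\Omega \setminus V) \cap B_\eps(\hat x)$ is nonempty and accumulates on $\partial \Omega$ near $\hat x$.

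The set $E' := (\et \cap V) \cup (\et \setminus \Omega)$ is then admissible for \eqref{eqperjer}; by the first observation it coincides with $\et$ outside $\Omega \cap B_\eps(\hat x)$ and with $\et \cap V$ inside. Combining the minimality of $\et$ against $E'$ with the minimality of $V$ against $V \cup (\et \cap \Omega)$ via the submodularity \eqref{addperv} of $\per_\varphi$ forces equality throughout, so that $\et \cap V$ is itself a minimizer of \eqref{eqperjer}. Now take a sequence $x_n \in \et$ with $x_n \to \hat x$; eventually $x_n \in U \subset \Omega \setminus V$, so $\et \cap V \subsetneq \et$ nontrivially. Applying the clean-ball property (Corollary \ref{cleanball}) at such a point $x_n$ to the minimizer $\et$ yields a ball contained in $\et \cap U$ whose removal when passing from $\et$ to $\et \cap V$ strictly reduces the relative perimeter inside $B_\eps(\hat x)$, contradicting the equality obtained above.

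The main obstacle is that $\hat x \in \partial \Omega$, so the interior density estimates of Proposition \ref{densite1} do not apply at $\hat x$ itself; the barrier condition is the correct substitute, supplying an inward perturbation $V$ against which to test $\et$. The delicate technical step is thus turning the qualitative non-touching $\partial V^{(1)} \cap \partial \Omega \cap B_\eps(\hat x) = \emptyset$ into a quantitative perimeter loss, which is carried out by a clean-ball argument at accumulation points of $\et \cap U$.
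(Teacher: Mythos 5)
First, a remark on the comparison: the paper does not prove this lemma at all; it is quoted as a restatement of \cite[Th.~1.1]{jerrard13}, so your attempt can only be judged on its own merits. Your first half is indeed the Sternberg--Williams--Ziemer/Jerrard--Moradifam--Nachman strategy: no exterior trace near $\hat x$, bring in the barrier set $V$, and cut-and-paste with submodularity. (Two points there are loose but fixable: $\Omega\setminus V$ need not be open, and what you actually need from Definition \ref{barjer} is that $|V\cap B_\delta(\hat x)|=0$ for some $\delta>0$, which follows from $\hat x\notin V^{(1)}$ -- using $V\subset\Omega$ Lipschitz -- together with $\hat x\notin \partial V^{(1)}$; also $\et\cap V$ is not admissible for \eqref{eqperjer} since its exterior trace is empty rather than $F_t\setminus\Omega$, so the object your equality produces is the competitor $E'=(\et\setminus B_\eps)\cup(\et\cap V)$, and the bookkeeping with \eqref{addperv} only closes after localizing the perimeters to a ball $B_{r_0}(\hat x)$ where $F_t$ vanishes.)

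The genuine gap is the final step. The equality forced by minimality plus submodularity says that $E'\subset\et$ is \emph{another} minimizer of \eqref{eqperjer} and, equally important, that $V':=V\cup(\et\cap B_\eps(\hat x))$ is another minimizer of the localized obstacle problem of Definition \ref{barjer}; it does not say that passing from $\et$ to $E'$ must change the perimeter. There is no principle by which removing a clean ball (Corollary \ref{cleanball}) ``strictly reduces the relative perimeter'': deleting a ball from a set typically creates boundary, and two distinct nested minimizers of the same constrained problem are perfectly possible, so no contradiction follows from $E'\subsetneq\et$ with equal perimeter. The contradiction has to come from the barrier condition itself, which is quantified over \emph{every} minimizer: apply it to $V'$. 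If $\hat x\in\overline{\et}$, then points of the open set $\et\cap B_\eps(\hat x)$ (open by Proposition \ref{propdensone}, and contained in $\Omega$ by your first step) accumulate at $\hat x$ and are interior points of $V'$, so $\hat x\in\overline{V'^{(1)}}$; on the other hand $V'\subset\Omega$ and $\Omega$ is Lipschitz, so $\hat x\notin V'^{(1)}$, whence $\hat x\in\partial V'^{(1)}\cap\partial\Omega\cap B_\eps(\hat x)$, contradicting Definition \ref{barjer}. This use of the universality of the barrier condition over minimizers is the missing idea; your clean-ball argument cannot replace it.
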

Now, let $u$ be a minimizer of \eqref{eqjer}. We prove that it is continuous. We will show that its level sets $E_t$ and $E_s$, for $s < t$, satisfy $E_t^{(1)}  \Subset E_s^{(1)}$. 

We begin by noting that these two sets cannot touch near $\partial \Omega$.
\begin{lem}
 Let $s < t.$ There exists $\delta >0$ and $\eps >0$ such that for every $x\in \Omega \cap \partial E_s^{(1)}$ with $d(x,\partial \Omega) \ls \delta$ and $y \in \et \cap \Omega$, then $d(x,y) \gs \eps.$ \label{paobord}
\end{lem}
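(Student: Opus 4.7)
The plan is to argue by contradiction and exploit the Dirichlet-type boundary condition \eqref{eqconb}, together with the semicontinuity of the representatives $u^\pm$ from Definition \ref{defupm}, to pin down the value $g(\hat x)$ at any boundary accumulation point.

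If the conclusion fails, there are sequences $x_n \in \Omega \cap \partial E_s^{(1)}$ with $d(x_n,\partial\Omega)\to 0$ and $y_n \in \et \cap \Omega$ with $|x_n - y_n|\to 0$; extracting a subsequence, both $x_n$ and $y_n$ converge to a common point $\hat x \in \partial\Omega$. The key preliminary fact to establish is the following pointwise (not just a.e.) control: for every $\eta > 0$ there is $r > 0$ such that
$$g(\hat x) - \eta \;\ls\; u^+(y) \;\ls\; g(\hat x) + \eta\qquad\text{for every } y \in B_r(\hat x) \cap \Omega.$$
The a.e. version of this is exactly \eqref{eqconb}, using $u^+ = u$ a.e. To upgrade to everywhere, note that $u^+$ is lower semicontinuous, so $\{u^+ > g(\hat x)+\eta\} \cap B_r(\hat x) \cap \Omega$ is open in $\R^n$; since it is also a subset of the a.e.\ exceptional set, it has Lebesgue measure zero, hence is empty. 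The lower bound follows identically using $u^-$ upper semicontinuous and $u^- \ls u^+$.

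Now case-split on $g(\hat x)$, recalling $E_s^{(1)} = \{u^+ > s\}$:
\begin{itemize}
\item If $g(\hat x) > s$, take $\eta = (g(\hat x)-s)/2$; then $u^+ > s$ everywhere in $B_r(\hat x)\cap\Omega$, so this whole (open) set lies in the open set $E_s^{(1)}$. But $x_n \to \hat x$ forces $x_n$ to eventually lie in $B_r(\hat x)\cap\Omega$, contradicting $x_n \in \partial E_s^{(1)}$ (an interior point cannot be on the topological boundary).
\item If $g(\hat x) < s$, analogously $u^+ < s$ on $B_r(\hat x)\cap\Omega$, so $E_s^{(1)}$ is empty there. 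Then $x_n$ has a neighborhood in $\Omega$ disjoint from $E_s^{(1)}$, again contradicting $x_n \in \partial E_s^{(1)}$.
\item The only surviving possibility is $g(\hat x) = s$. But then $g(\hat x) < t$, and the same argument applied with $t$ in place of $s$ (taking $\eta = (t-s)/2$) gives $u^+ < t$ on $B_r(\hat x)\cap \Omega$, i.e.\ $\et \cap B_r(\hat x)\cap\Omega = \emptyset$, which contradicts $y_n \in \et$ for $n$ large.
\end{itemize}

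All three cases being impossible, the lemma follows. The only subtle step is the upgrade from the essential boundary condition \eqref{eqconb} to a pointwise inequality for $u^+$; once the semicontinuity of $u^\pm$ is invoked this is immediate, and the rest is a pure case analysis. No compactness or regularity of level-sets is needed, only the openness of $E_s^{(1)}$ and $\et$.
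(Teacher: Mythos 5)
Your argument is correct in substance, but it follows a genuinely different route from the paper. The paper's proof is a one-liner: it invokes Lemma \ref{lembord} (the restatement of \cite[Th.~1.1]{jerrard13}, i.e.\ the place where the barrier condition enters) with $\eps=\frac{t-s}{2}$, and then compactness of $\partial\Omega$ (a covering argument) produces the uniform $\delta$. You instead take the attainment of the boundary datum in the sense \eqref{eqconb} as the input, upgrade it from an essential to a pointwise statement via the semicontinuous representatives of Definition \ref{defupm}, and replace the covering by a compactness-by-contradiction extraction of sequences $x_n,y_n\to\hat x\in\partial\Omega$. This is legitimate: a minimizer of \eqref{eqjer} satisfies \eqref{eqconb} by the very definition of the admissible class (in the paper's logical order, \eqref{eqconb} for the relaxed minimizer is itself obtained from Lemma \ref{lembord}, so you are not avoiding that input, only using its consequence), and your case analysis on $g(\hat x)$ relative to $s$ and $t$ correctly converts the pointwise bounds into ``$B_r(\hat x)\cap\Omega\subset E_s^{(1)}$'', ``$E_s^{(1)}\cap B_r(\hat x)\cap\Omega=\emptyset$'' or ``$\et\cap B_r(\hat x)\cap\Omega=\emptyset$'', each of which contradicts $x_n\in\partial E_s^{(1)}$ or $y_n\in\et$ for $n$ large, using only the openness of $E_s^{(1)}$ and $\et$. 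What the paper's route buys is the explicit constant $\eps=\frac{t-s}{2}$ read off directly from Lemma \ref{lembord}; what yours buys is that no separate covering argument or explicit quantification is needed.

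One wrinkle to fix: your lower bound for $u^+$ leans on ``$u^-\ls u^+$'', quoted from Definition \ref{defupm}. With the sets as defined there ($\{u^+>s\}=E_s^{(1)}$ and $\{u^-\gs s\}=(E_s^{(0)})^c$), the correct pointwise inequality is the opposite one, $u^+\ls u^-$, since $E_s^{(1)}\subset(E_s^{(0)})^c$ (the line ``$u^-\ls u\ls u^+$'' in the paper is a slip: $u^+$ is the lower semicontinuous, hence smaller, representative). The bound you need is nevertheless true and is best obtained directly from the density definition: if $u\gs g(\hat x)-\eta$ a.e.\ on $B_r(\hat x)\cap\Omega$ and $x$ lies in this open set, then for every $s<g(\hat x)-\eta$ a small ball around $x$ is contained in $E_s$ up to a null set, so $x\in E_s^{(1)}=\{u^+>s\}$; letting $s$ increase to $g(\hat x)-\eta$ gives $u^+(x)\gs g(\hat x)-\eta$ everywhere on $B_r(\hat x)\cap\Omega$. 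With this substitution your proof is complete.
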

This is straightforward using Lemma \ref{lembord}, with $\eps = \frac{t-s}{2}.$ The compactness of $\partial \Omega$ provides the expected $\delta.$

Before proving Theorem \ref{thmjer}, we state a very standard but useful 
\begin{lem}
 \label{twomin}
Let $E$ and $\tilde E$ be two minimizers of \eqref{eqperjer} with $F_t$ replaced respectively by $F$ and $\tilde F$ and assume that $(E \cup \tilde E) \setminus E$ is a compact subset of $\Omega$. Then, $E \cup \tilde E$ and $E \cap \tilde E$ are minimizers of \eqref{eqperjer} with $F_t$ replaced respectively with $F$ and $\tilde F$.
\end{lem}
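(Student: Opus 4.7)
The approach is a standard submodularity argument. The plan has two steps: first check admissibility of $E\cup\tilde E$ and $E\cap\tilde E$ as competitors in their respective problems, then combine the submodularity of $\per_\varphi$ with the minimality of $E$ and $\tilde E$.

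For the admissibility step, the hypothesis that $(E\cup\tilde E)\setminus E = \tilde E \setminus E$ is compactly contained in $\Omega$ forces $\tilde F \setminus \Omega \subset F \setminus \Omega$ (up to a negligible set). Indeed, if $x\notin \Omega$ lies in $\tilde F$, then $x\in \tilde E$ since $\tilde E = \tilde F$ outside $\Omega$; were $x\notin F$, then $x\notin E$, so $x$ would belong to $\tilde E \setminus E$, contradicting compact containment in $\Omega$. Consequently
$$(E\cup\tilde E)\setminus\Omega = (F\setminus\Omega)\cup(\tilde F\setminus\Omega) = F\setminus\Omega,$$
and symmetrically $(E\cap\tilde E)\setminus\Omega = \tilde F\setminus\Omega$. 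Thus $E\cup\tilde E$ is admissible in the $F$-problem and $E\cap\tilde E$ in the $\tilde F$-problem.

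For the minimality step, I would invoke the submodularity of the anisotropic perimeter recorded in \eqref{addperv}:
$$\per_\varphi(E\cup\tilde E) + \per_\varphi(E\cap\tilde E) \ls \per_\varphi(E) + \per_\varphi(\tilde E).$$
By the admissibility established above, minimality yields
$$\per_\varphi(E) \ls \per_\varphi(E\cup\tilde E), \qquad \per_\varphi(\tilde E) \ls \per_\varphi(E\cap\tilde E).$$
Summing these two and comparing with the submodular inequality shows that both bounds are in fact equalities, so $E\cup\tilde E$ realizes the minimum of the $F$-problem and $E\cap\tilde E$ realizes the minimum of the $\tilde F$-problem.

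I do not anticipate any real obstacle. The only delicate point is the bookkeeping in the admissibility step, since the statement swaps the boundary data between the union and the intersection; once that is sorted, submodularity plus minimality closes the argument in one line.
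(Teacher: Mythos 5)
Your proposal is correct and follows essentially the same route as the paper: check that $E\cup\tilde E$ (resp.\ $E\cap\tilde E$) is admissible for the $F$-problem (resp.\ the $\tilde F$-problem) using the compact containment of $\tilde E\setminus E$ in $\Omega$, then sum the two minimality inequalities and compare with the submodularity \eqref{addperv} to force equality. Your admissibility bookkeeping is just a slightly more detailed version of the paper's one-line observation that $(E\cup\tilde E)\setminus\Omega = E\setminus\Omega = F\setminus\Omega$ and $(E\cap\tilde E)\setminus\Omega = \tilde E\setminus\Omega = \tilde F\setminus\Omega$.
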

\begin{proof}
 The proof is also very standard but we give it for completeness. We notice that $(E \cup \tilde E) \setminus \Omega = E \setminus \Omega =F \setminus \Omega$ so $E \cup \tilde E$ is an admissible perturbation for $E$. One therefore can write
$$ \per_\varphi(E\cup \tilde E) \gs \per_\varphi E.$$
Similarly $E \cap \tilde E$ is an admissible perturbation for $\tilde E$ and we can write,
$$\per_\varphi(E\cap \tilde E) \gs \per_\varphi \tilde E.$$
By summing the two inequalities and recalling \eqref{addperv}, we must have equality in the inequalities. That is the claim.
\end{proof}

\begin{proof}[Proof of Theorem \ref{thmjer}.]
We proceed by contradiction. Let us assume that there exists $x_0 \in \partial \es \cap \partial \et$ and let $r_0 = \frac{min(\delta,\eps)}{10},$ where $\delta$ and $\eps$ are the constants provided by Lemma \ref{paobord}. Thanks to this lemma, $d(x_0,\partial \Omega) \gs \delta.$ 

Recalling that $\partial \es$ and $\partial \et$ are regular up to a compact set of dimension at most $n-3$ we can choose $\alpha \in \partial \es$ and $\beta \in \partial \et$ two regular points such that 
$$|\alpha - x_0| \ls r_0 \quad \text{and} \quad |\beta - x_0| \ls r_0.$$
If $\nu = \alpha - \beta$, note that $|\nu| \ls \frac 12 \min(\delta,\eps)$ thus $\es$ and $\et + \nu$ do not touch near the boundary $\partial \Omega.$

\paragraph{The regular set $\reg(\partial \es \cap B_{r_0/2} (x_0))$ is a set of pieces of parallel hyperplanes.}
The point $\alpha$ is regular means that one can find a direction $n$ such that both $\partial \es$ and $\partial \tilde \et := \partial \et + \tilde \nu$ are ($\mathcal C^2$) graphs around $\alpha$. Since $\tilde \et \cap \es$ and $\tilde \et \cup \es$ are also minimizers (thanks to Lemma \ref{twomin}) and are both graphs around $\alpha$, we have two functions $w_1 \ls w_2$ such that $w_1(\alpha') = w_2(\alpha')$ and which satisfy the zero $\varphi$-mean curvature equation for graphs
$$ \div' \left( \nabla' \varphi(\nabla' w_i,-1) \right) = 0.$$
By comparison principle for graphs (\cite{giga05}, the one used in Section \ref{onesmooth}), they must coincide locally. 

Notice that this coincidence is true for every pair of regular points $\alpha,\beta \in B_{r_0/2} (x_0)$ with $\alpha \in \partial \es$ and $\beta \in  \partial \et + \nu$. Leaving $\beta$ and moving only $\alpha$, this proves that every regular point $\alpha$ of $\partial \es \cap B_{r_0/2}(x_0)$ has a neighborhood (in $\partial \es$) which coincides with a neighborhood of $\beta$ in $\partial \et + \nu.$ As a result, every regular point of $\partial \es \cap B_{r_0/2}(x_0)$ has the same normal (let us call it $\omega$). Since in addition, the set of regular points is an open subset of $\partial \es$, the connected components of $\reg(\partial \es)$ are pieces of affine hyperplanes parallel to $\omega^\perp,$ oriented either by $\omega$ or by $-\omega$.

Of course, $\reg((\partial \et) \cap B_{r_0/2} (x_0))$ satisfies the same property.

\paragraph{These pieces of hyperplans which cross $B_{r_0/4} (x_0)$ fill $B_{r_0/4} (x_0)$.} Indeed, Let $x \in \reg \partial \es \cap B_{r_0/4} (x_0)$. Then, there is a ball $\hat B$ (of radius $\hat r$) around $x$ such that $\partial \es \cap \hat B$ is exactly a diameter of $\hat B$. Let us assume that the normal of $\partial \es$ is $\omega$ in $\hat B$. Then, let us consider the cylinder $\hat C$ generated by $\hat B$ and a vector $e \perp \omega$ in the ball $B_{r_0/2} (x_0)$. One can write, for every $R$ such that $z+Re \in B_{r_0/4} (x_0)$
$$ \int_{\substack{z \in e^\perp \\ |z| \ls \hat r}}\int_{\tau = 0}^{Re} |D(\chi_{\es} (z+\tau e))| = \int_{\hat C \cap \partial \es} |\nu_{\es} \cdot e|dH^{n-1} = 0 $$
because $e \perp \omega.$
Then, for almost every $z \in e^\perp$ with $|z| \ls \hat r$, we have $\tau \mapsto \chi_{\es}(z+\tau e)$ is constant. That means that if $z + \tau e$ belongs to $\es$ for some $\tau$, that is true for every $\tau$ (and similarly for $\notin \es$). Finally, the piece of hyperplane of $\reg \es$ which is a diameter of $\hat B$ exists in the whole cylinder $\hat C$, and since $e$ is arbitrary in $\omega^\perp$, in the whole ball $B_{r_0/4} (x_0)$ (we have to stay sufficiently far from $\partial B_{r_0 /2} (x_0)$ in order to keep the whole cylinder inside $B_{r_0 /2} (x_0)$).

\paragraph{The point $x_0$ is in fact regular} Thanks to the previous paragraphs, $\reg \es \cap B_{r_0/4}(x_0)$ is a (finite, for measurability reasons) set of hyperplanes.

In addition, since $x_0 \in \partial \es \cap (\partial \et + \nu)$, we have a sequence of points in $\reg \es$ (which therefore belong to hyperplanes) which converge to $x_0$. Using the finiteness of the set of hyperplanes, $x_0$ must be in one of them. So, $x_0$ is in fact a regular point of $\es$ (the same holds for $\et + \nu$), and $\es$ and $\et + \nu$ coicinde around $x_0$. That is exactly saying that $\partial \es \cap (\partial \et + \nu)$ is open in $\partial \es.$ It is closed by definition. To reach a contradiction, we now need to show that every connected component of $\partial \es$ has to reach the boundary $\partial \Omega$. 
\end{proof}
\begin{prop}
There is no connected component of $\partial \es$ which is compact in $\Omega$.
\end{prop}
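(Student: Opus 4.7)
The plan is to argue by contradiction via a bubble-removal argument. Suppose that there is a connected component $\Gamma$ of $\partial\es$ which is compact in $\Om$.

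First, I would isolate $\Gamma$ from the rest of $\partial\es$. Since $\Gamma$ is a compact connected component of the closed set $\partial\es$ in the locally compact space $\R^n$, a standard topological fact about compact components provides an open set $W$ with $\Gamma\subset W$, $\bar W\subset\Om$, and $\bar W\cap\partial\es=\Gamma$. By Proposition~\ref{propdensone}, the open sets $W\cap\es$ and $W\cap E_s^{(0)}$ are disjoint, partition $W\setminus\Gamma$, and are both nonempty since $\Gamma$ lies in the boundary of each.

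Second, I would extract a bounded region enclosed by $\Gamma$. As $\Gamma$ is a compact connected $(n-1)$-dimensional ``topological hypersurface''---a $\mathcal{C}^{1,\alpha}$ manifold off a singular set of low Hausdorff dimension, by the anisotropic regularity theory of perimeter minimizers---a Jordan--Brouwer type argument produces a bounded connected component $V$ of $\R^n\setminus\Gamma$; enlarging $W$ if necessary, we may arrange $\bar V\subset W$. By connectedness and Proposition~\ref{propdensone}, either $V\subset\es$ or $V\subset E_s^{(0)}$; after an obvious symmetry we assume $V\subset\es$.

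Third, I would produce a competitor with strictly smaller perimeter. Setting $E':=\es\setminus V$, the set $E'$ coincides with $\es$ outside the compact subset $\bar V\subset\Om$ and is therefore admissible for~\eqref{eqperjer}. At every regular point of $\Gamma$ (a set of full $\H^{n-1}$-measure by the regularity theory), the $V$-side of $\Gamma$ lies in $\es$ while the opposite side lies in $E_s^{(0)}$; once $V$ is removed, both sides of such a point belong to $(E')^c$, so the point is no longer in $\partial E'$. Consequently
$$\per_\varphi(E')=\per_\varphi(\es)-\H^{n-1}_\varphi(\Gamma)<\per_\varphi(\es),$$
contradicting the minimality of $\es$ in~\eqref{eqperjer}.

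The main obstacle is the second step: rigorously producing a bounded complementary component $V$ of $\R^n\setminus\Gamma$ whose closure can be arranged inside $W$. This relies on combining the anisotropic regularity theory (so that $\Gamma$ is a topological manifold off a small singular set) with a Jordan--Brouwer-type separation argument for the compact connected hypersurface $\Gamma$. The remaining steps are straightforward verifications using Proposition~\ref{propdensone} and the perimeter computation above.
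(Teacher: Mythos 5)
Your step three (the perimeter bookkeeping) and step one (isolating $\Gamma$ inside $\partial \es$, which is essentially the same separation the paper encodes in its $\{0,1\}$-valued function $f$) are fine in spirit, but the second step is a genuine gap, not a ``straightforward verification''. First, Jordan--Brouwer separation requires $\Gamma$ to be an embedded topological hypersurface, and a compact component of $\partial\es$ need not be one: the anisotropic regularity theory leaves a singular set of dimension up to $n-3$, so for $n\geq 4$ no off-the-shelf separation theorem applies and you would need a substitute argument (Alexander duality for the regular part, or a measure-theoretic ``constancy'' argument) to produce a bounded component of $\Rn\setminus\Gamma$ at all. Second, and more seriously, even granting such a component $V$, your requirement $\overline V\subset W$ is in general false and cannot be rescued by ``enlarging $W$'': $W$ was chosen precisely so that $\overline W\cap\partial\es=\Gamma$, while the region enclosed by $\Gamma$ is far from $\Gamma$ and has no reason to avoid the rest of $\partial\es$. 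With nested components of $\partial\es$ (the outer bubble enclosing inner ones), $V$ meets $\partial\es$, the dichotomy ``$V\subset\es$ or $V\subset E_s^{(0)}$'' coming from Proposition~\ref{propdensone} breaks down, and the identity $\per_\varphi(E')=\per_\varphi(\es)-$ (the $\varphi$-area removed along $\Gamma$) is no longer justified; moreover, if $\Omega$ is not simply connected, $V$ can even leave $\Omega$, so that $E'=\es\setminus V$ is not an admissible competitor in \eqref{eqperjer}. These obstructions are exactly what you flag as ``the main obstacle'', but they are the heart of the proof, not a checkable remainder.

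The paper's own argument is designed to avoid any enclosure statement. It never asks $\Gamma$ to bound a region: it separates $\Gamma$ from the rest of $\partial\es$ by a continuous function $f:\partial\es\to\{0,1\}$, extends $f$ to $\Omega$, and takes the connected component $C$ of $\{f<\tfrac14\}$ containing $\Gamma$ --- a \emph{neighborhood} of $\Gamma$ whose relative boundary $\partial C\subset\{f=\tfrac14\}$ stays at positive distance from $\partial\es$. The competitor is then built at the level of the function $u$ itself, by truncating $u$ inside $C$ at a value strictly between $s$ and $a=\min_{\partial C}u^+$ (resp.\ above $b=\max_{\partial C}u^-$): the truncation does not increase the energy, so it is again a minimizer, yet its $s$-level set has no boundary inside $C$, which strictly decreases $\per(\{u>s\})$ --- a contradiction; the residual case is excluded because $\partial C$ is far from $\partial\es$. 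If you wish to keep a purely set-theoretic surgery, you must first prove the enclosure and ``no other boundary inside the bubble'' properties (for instance via innermost components and an indicator-function/constancy argument), or else switch to the function-level truncation as the paper does.
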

\begin{proof}
 Let us proceed by contradiction and call $\Gamma$ a compact connected component of $\partial \es.$ We denote by $\delta$ the distance between $\Gamma$ and $\partial \Omega$. One can find a continuous function $f : \partial \es \to \{0,1\}$ which is $0$ on $\Gamma$ and $1$ on $\es \setminus \{\dist(x,\Gamma) < \delta/2\}.$ Since $\es$ is compact, $f$ is uniformly continuous. Let call $\omega$ its modulus of continuity and extend $f$ to the whole $\Omega$ by
 $$ f(x) = \sup_{y \in \es} f(y) + \omega(x-y).$$
 In addition, we may assume that $f \gs 1$ on $\partial \Omega$ (possibly replacing $f$ by 
 $\max(f,1-\dist(x,\partial \Omega)/\delta)$).
 Note that $f(x)=\alpha \in (0,1)$ implies that $x$ remains far from $\partial \es.$
 
 Now, let us introduce $C$ as the connected component of the open subset $\{f < \frac 14\}$ which contains $\Gamma$ and set
 $$a := \min_{x \in \partial C} u^+(x) \quad \text{and} \quad b := \max_{x \in \partial C} u^-(x)$$
 where $u^\pm$ are defined in Definition \ref{defupm}.
 If $a > s$, then we define $v$ such that $v=u$ everywhere but in $C \cap \left \{u \ls \frac{a+s}{2} \right\}$ where we set $v = \frac{a+s}{2}.$ \\
 Then, we notice that $v$ differs from $u$ only in a neighborhood of $\Gamma$ and 
 $$ \int_C \varphi \left(\nabla \left( u \vee \frac{a+s}{2} \right) \right) + \int_C \varphi \left( \nabla  \left( u \wedge \frac{a+s}{2} \right) \right) \ls \int_C \varphi(\nabla u) + \int_C \underbrace{\nabla \left( \frac{a+s}{2} \right)}_{=0}.$$
 Then, $v$ is also a minimizer with 
 $$(\partial \{v >s\}) \cap C = \emptyset,$$
 which implies
 $$\per(\{v > s\}) < \per(\{u>s\}),$$
 which cannot happen.
 
 Similarly, if $b \ls s$, then we introduce $v = u \wedge s$ in $C$, $v=u$ in $C^c$ and we also reach a contradiction.
 
 Finally, we cannot have either $\partial C \subset \{u > s\}$ or $\partial C \subset \{u \ls s\}.$ But on the other hand, we have $\partial C \subset \{f=\frac 14\}$ which means that $\partial C$ cannot be too close to $\partial \es$: this is a contradiction.
 
\end{proof}

\begin{rem}
 All the proof above can be reproduce with $\et =\{u > t \}$ and $\es = \{v > s\}$, if $u$ and $v$ are two minimizers: that shows $u = v$ a.e.
\end{rem}

\section{Local continuity}
\label{seclocal}
In this section, we get back to the isotropic case \eqref{rof}. We want to prove the
\begin{thm}
 Let $g : \Omega \to \R$ be continuous and bounded and let $u$ be a minimizer of
\begin{equation}
 \int_\Omega |Du| + \int_\Omega \frac{(u-g)^2}{2}. \label{rof1}
\end{equation}
Then, $u$ is continuous.
\end{thm}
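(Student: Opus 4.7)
The plan is to reduce continuity of $u$ to a geometric question on level sets, then establish a variational strong maximum principle that removes the $\mathcal C^1$ hypothesis of Theorem \ref{thmsmooth}. Concretely, Proposition \ref{proparof} (applied with $\varphi=|\cdot|$) tells us that every level set $E_\tau := \{u>\tau\}$ minimizes $F \mapsto \per(F) + \int_F(\tau-g)$; passing to the good representatives $E_\tau^{(1)}$ (open by Proposition \ref{propdensone}), which are nested $\et \subset \es$ for $s<t$, Definition \ref{defupm} reduces the continuity of $u$ to showing
\[
\partial \es \cap \partial \et = \emptyset \qquad \text{for every } s<t.
\]

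I would argue by contradiction: fix $s<t$ and an interior touching point $x_0 \in \partial \es \cap \partial \et$. Since $g$ is continuous, on a small ball $B_r(x_0)$ the forcings $s-g$ and $t-g$ differ by at least $(t-s)/2>0$, which is exactly the strict inequality required in Theorem \ref{thmsmooth}. The only missing ingredient is the $\mathcal C^1$ regularity of one of the two boundaries at $x_0$, which cannot be postulated a priori. The heart of the section is therefore to extend Theorem \ref{thmsmooth} to a purely variational strong maximum principle, in the spirit of \cite{simon87} for minimal surfaces.

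To upgrade $x_0$ to a regular touching, I would use the clean ball property (Corollary \ref{cleanball}) to find, arbitrarily close to $x_0$, an inner ball $B^{\mathrm{in}} \subset \et$ and an outer ball $B^{\mathrm{out}} \subset (\es)^c$. Sliding $B^{\mathrm{in}}$ toward $\partial \es$ (resp.\ $B^{\mathrm{out}}$ toward $\partial \et$) until first contact produces a point $y_0$ at which the corresponding set is tangent to a smooth ball from one side. The boundary regularity theory for almost-minimizers of the perimeter with $C^0$ forcing then shows that this boundary is $\mathcal C^{1,\alpha}$ near $y_0$, so Theorem \ref{thmsmooth} applies there: it forces either local equality of $\es$ and $\et$ (impossible since the forcings differ by $t-s>0$) or a non-touching in a neighborhood of $y_0$. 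An openness/connectedness argument then propagates the non-touching back to $x_0$, contradicting its very existence.

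The principal obstacle is this reduction to a regular touching: one must guarantee that the sliding-ball construction yields a $y_0$ which is simultaneously a boundary regular point \emph{and} a genuine touching point of $\partial \es$ and $\partial \et$, as opposed to a mere contact with the auxiliary ball. This is where the strict forcing gap $t-s>0$, the density estimates of Corollary \ref{cleanball}, and the boundary regularity theory interact in a delicate way. Once this reduction is carried out, the remainder of the proof is a clean application of Section \ref{onesmooth} together with the strict viscosity comparison principle of \cite{giga05}.
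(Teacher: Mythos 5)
Your reduction of continuity to the statement $\partial E_s^{(1)}\cap\partial E_t^{(1)}=\emptyset$ for $s<t$ is exactly the right starting point, and the observation that at a touching point the two forcings differ by at least $(t-s)/2$ is also how the paper proceeds. But the core of your argument --- upgrading the touching point to one where Theorem \ref{thmsmooth} applies --- has a genuine gap, and you have in fact put your finger on it yourself. Sliding the clean ball $B^{\mathrm{in}}\subset E_t^{(1)}$ until first contact produces a point $y_0\in\partial E_s^{(1)}$ at which $E_s$ is tangent to the \emph{auxiliary ball} from one side; it gives no reason whatsoever for $\partial E_t^{(1)}$ to pass through $y_0$, so you never obtain a touching point of the two level sets at which one of them is $\mathcal C^1$. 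Worse, such a point cannot be manufactured at all: Theorem \ref{thmsmooth} (with the strict forcing gap) shows precisely that touching points at which one of the two boundaries is regular do not exist, so if $\partial E_s^{(1)}\cap\partial E_t^{(1)}\neq\emptyset$ the contact necessarily occurs at points that are singular for \emph{both} boundaries. Handling that case is the whole difficulty, and it is not accessible by a one-sided tangent-ball trick; it is the content of Simon's strict maximum principle \cite{simon87}, whose proof requires tangent-cone analysis, dimension reduction and the Bombieri--Giusti Harnack inequality on minimal cones \cite{bombieri72}. Likewise, the ``openness/connectedness'' propagation you invoke has nothing to run on here: in Section \ref{secjer} such an argument works because zero curvature plus translation invariance forces the regular pieces to be parallel hyperplanes and the Dirichlet data excludes compact components, and neither ingredient is available in the local ROF setting (the paper points out explicitly that the boundary cannot be used here).

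For comparison, the paper's route avoids looking for regular touching points altogether: using the continuity of $g$ it traps, inside a small ball around the putative touching point, two nested sets $E\subset F$ of the \emph{same} constant variational mean curvature $a$ with $E_s\subset E\subset F\subset E_t$ (weak comparison), and then proves a full strict maximum principle for such sets, Theorem \ref{simonp}: either $E=F$ or $\partial E\cap\partial F=\emptyset$, with no regularity hypothesis. The proof of Theorem \ref{simonp} --- monotonicity formula, improvement of flatness, $\mathcal C^2$ convergence of blowups near regular points of the tangent cone (Theorem \ref{massaripepe}), reduction to a common tangent cone, the Jacobi-type equation for the difference of the two graphs, and the Harnack inequality on the cone --- occupies the rest of the section and is exactly the machinery your proposal would still need to supply. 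A minor additional caveat: even your intermediate claim that one-sided tangency to a smooth ball yields $\mathcal C^{1,\alpha}$ regularity of the boundary at the contact point is itself a maximum-principle-type statement (a minimizing cone on one side of a hyperplane is a hyperplane), so it cannot be treated as a free ingredient. As it stands, the proposal identifies the correct target but does not prove the decisive step.
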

Note that this theorem is local and therefore extends \cite[Th. 2]{chambolle11} (but for continuous functions only).

We will use the level sets. More precisely, let $E_s \subset E_t$ two level sets of $u$ (with $s > t$). We know that they minimize respectively (with respect to compact perturbations in $\Omega$)
$$\per(E,\Omega) + \int_{E \cap \Omega} s-g$$ and
$$\per(E,\Omega) + \int_{E \cap \Omega} t-g.$$

The strategy is the following. We know that two minimal surfaces satisfy a strict comparison principle \cite{simon87}, and we can extend this proof to constant mean curvature surfaces. As a result, we first show that we can create two different constant mean curvature which stands between $E_s$ and $E_t$. Then, we show that these surfaces do not touch. So, neither can $E_s$ and $E_t$. As before, we replace $E_s$ and $E_t$ by the set of points of density one.

\subsection{Back to constant mean curvature}
We assume (and we hope that we can get a contradiction) that there is $x_0 \in \partial E_s \cap \partial E_t.$ Note first that since $E_s$ and $E_t$ have mean curvature which are different, they cannot coincide on a neighborhood of $x_0$. By continuity of $g$, we can find $\rho >0$ such that on $B_\rho(x_0)$, we have $g(x_0) -\alpha < g(x) <g(x_0)+ \alpha$ with $\alpha:= \frac{s-t}{100}.$
So, let $a = s- g(x_0) - \alpha$. Then, 
$$\forall x,y \in B_\rho(x_0), \quad t-g(x) \ls a \ls s - g(y) .$$

Now, we introduce $E$ with finite perimeter in $\Omega$ as the minimizer of
$$ E = \argmin_{G \Delta E_s \subset B_\rho(x_0)} \per(G,\Omega) + a |G \cap B_\rho(x_0)| $$
and similarly, $F$ with finite perimeter in $\Omega$ and minimizing
$$ F = \argmin_{G \Delta E_t \subset B_\rho(x_0)} \per(G,\Omega) + a |G \cap B_\rho(x_0)| .$$
Note that $E$ and $F$ have variational constant mean curvature $a$.

Using the standard (weak) comparison principle, we have $E_s \subset E \subset F \subset E_t.$ In addition, since $E_s$ and $E_t$ cannot coincide, $E$ and $F$ cannot either. On the other hand, we must have $x_0 \in \partial E \cap \partial F$.

To show that $\partial E_s$ and $\partial E_t$ cannot touch, it is enough to prove that $\partial E \cap \partial F = \emptyset$. That is to prove the

\begin{thm}
Let $a \in \R$ and $E \subset F$ such that $E$ and $F$ both minimize (with respect to compact perturbations) in an open subset $O$,
\begin{equation}\per(E;O) + a |E \cap O| . \label{minp}\end{equation}
Then, either $E = F$ or $\partial E \cap \partial F = \emptyset$.
\label{simonp}
\end{thm}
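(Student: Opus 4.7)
The plan is to reduce the claim to the strict viscosity comparison principle \cite[Thm.~3.1]{giga05} used already in the proof of Theorem \ref{thmsmooth}. Suppose, for a contradiction, that $E \neq F$ yet $\partial E \cap \partial F \neq \emptyset$, and fix $x_0$ in this intersection. Working in a small ball $B_\rho(x_0) \subset O$, the target is local coincidence $E = F$ near $x_0$; once this is established, the set of points $x \in \partial E$ admitting such a neighborhood is open, and closed by the density estimates of Proposition \ref{densite1}, so a connectedness argument on the relevant piece of $\partial E$ propagates the equality globally and contradicts $E \neq F$.

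First I would use Corollary \ref{cleanball} to select a unit vector $e$ which is ``roughly normal'' to both $\partial E$ and $\partial F$ at $x_0$: the two clean balls (inside $E \subset F$ and outside $F \supset E$) near $x_0$ determine a cone of separating directions, and any interior vector $e$ works. In coordinates $(x',x_n)$ with the $x_n$-axis along $e$, introduce on a small disk around $x_0'$ the upper envelopes
$$u(x') := \sup\{x_n : (x',x_n) \in \overline E \cap B_\rho(x_0)\}, \qquad v(x') := \sup\{x_n : (x',x_n) \in \overline F \cap B_\rho(x_0)\}.$$
These are upper semi-continuous, satisfy $u \ls v$ since $E \subset F$, and coincide at $x_0'$ with common value $x_{0,n}$.

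The core step is to show that $u$ is a viscosity subsolution and $v$ a viscosity supersolution of the prescribed mean curvature equation
$$-\div'\!\left( \frac{\nabla' w}{\sqrt{1 + |\nabla'w|^2}}\right) = a.$$
For $u$ this is exactly the Caffarelli--Cordoba argument already run in the proof of Theorem \ref{thmsmooth}: given a smooth $\psi$ touching $u$ from above at a strict local maximum $x_1'$, form the signed distance $d$ to the graph $\Gamma$ of $\psi$ and use Lemma \ref{lapdist} (with $\varphi = |\cdot|$) to rewrite $\div(\nabla d)$ on $\Gamma$ as $\div'(\nabla'\psi / \sqrt{1 + |\nabla'\psi|^2})$; comparing $E$ against the compact perturbation $E \setminus (\tilde\Omega - \delta e_n)$, where $\tilde\Omega$ is the epigraph of $\psi$, and invoking the minimality of $E$ produces the required inequality. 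The essential difference with Theorem \ref{thmsmooth} is that the smoothness needed in the computation now lives entirely on the test function $\psi$, not on either competitor; neither $\partial E$ nor $\partial F$ needs to be $\mathcal C^1$. The analogous argument performed on the complement $F^c$ (which minimizes $\per(\cdot;O) - a|\cdot \cap O|$ up to an additive constant) gives the supersolution inequality for $v$. With both inequalities in hand and $u(x_0') = v(x_0')$, \cite[Thm.~3.1]{giga05} applied to $F(p,X) = \tr(D'^2 |\cdot|(p,-1)\,X)$ (whose three structural assumptions were verified at the end of Section \ref{onesmooth}) forces $u \equiv v$ in a neighborhood of $x_0'$; running the same argument with lower envelopes yields $E = F$ near $x_0$.

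The main obstacle is establishing the viscosity subsolution property of $u$ without any a priori regularity of $\partial E$: one must ensure that a strict touching of $u$ from above by a smooth $\psi$ really corresponds to a non-trivial compact perturbation of $E$ by the translated subgraph of $\psi$, and that the signed distance to $\Gamma$ remains smooth on this perturbation. The clean-ball estimate is needed here both to guarantee that the perturbation is non-trivial (it contains a ball of definite size) and to keep it inside the tubular neighborhood where the distance identity holds. An alternative route which avoids these technicalities would be to approximate $E$ and $F$ by smooth CMC-type sets (via an Allen--Cahn or Modica--Mortola regularization) and pass to the viscosity limit, but the direct approach is cleaner and parallels Theorem \ref{thmsmooth} closely.
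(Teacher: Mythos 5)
Your reduction to the viscosity comparison principle of \cite[Th.~3.1]{giga05} breaks down at the step you treat as a transfer of the argument of Theorem \ref{thmsmooth}: that proof is not symmetric in the two sets. The Caffarelli--Cordoba distance construction only gives the \emph{sub}solution property for the upper envelope of a minimizer; the supersolution on the other side is obtained in Theorem \ref{thmsmooth} from the $\mathcal C^1$ regularity of $\partial G$, which makes $\partial G$ an honest graph over the chosen direction and its graph function a continuous variational, hence viscosity, supersolution. In Theorem \ref{simonp} neither set is regular, and both of your substitutes fail. The upper envelope $v$ of the larger set $F$ is, by the very same envelope argument, again a subsolution, and it is upper semicontinuous, whereas the supersolution fed into \cite[Th.~3.1]{giga05} must be lower semicontinuous; nothing makes $v$ a supersolution. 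Running the argument on $F^c$ (which indeed minimizes the functional with $a$ replaced by $-a$) yields a supersolution property for the \emph{lower envelope of the complement}, $w(x')=\inf\{x_n:(x',x_n)\in \overline{F^c}\cap B_\rho\}$, and $w=v$ only when $F$ is locally a subgraph in the direction $e$ --- exactly the regularity you do not have. In general (think of a vertical slice of $F$ of the form $(-\infty,1]\cup[2,3]$) one has $w<v$ and even $w<u$, so the ordering $u\ls w$ needed to compare the subsolution from $E$ with the supersolution from $F^c$ can fail, and there is no reason for $w$ to touch $u$ at $x_0'$. Finally, the touching itself is not guaranteed: when $x_0$ is a singular point of both boundaries, no choice of $e$ coming from the two clean balls of Corollary \ref{cleanball} prevents points of $E$ (hence of $F$) from lying directly above $x_0$ at every small scale, so you cannot even ensure $u(x_0')=v(x_0')=x_{0,n}$, i.e.\ that the graphs of the envelopes pass through $x_0$.

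This is not a removable technicality: the entire difficulty of Theorem \ref{simonp} is concentrated at contact points where both boundaries are singular (if the contact point is regular for both, the classical strong maximum principle for CMC graphs, as invoked in the proof of Theorem \ref{thmjer}, already applies), and at such points no graph representation of either boundary exists, which is precisely what the envelope-plus-\cite{giga05} shortcut needs. The paper therefore follows Simon \cite{simon87}: monotonicity (Proposition \ref{propmon}) and improvement of flatness (Theorem \ref{flatness}) give blowups converging in $\mathcal C^2$ near the regular part of a tangent cone (Theorem \ref{massaripepe}); the Bombieri--Giusti dimension reduction allows one to assume a common tangent cone $C$; both boundaries are then written as small normal graphs $h_1^{(j)},h_2^{(j)}$ over $\reg C$, whose difference satisfies the Jacobi-type equation of Lemma \ref{jacobi}, and the Harnack inequality on minimal cones of \cite{bombieri72} contradicts the decay \eqref{sim2}--\eqref{contrad2}. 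If a soft viscosity argument of the kind you propose sufficed, it would also supersede \cite{simon87,solomon89,Ilm96}, whose whole point is the singular-contact case. (Your closing propagation step, openness and closedness of the coincidence set in $\partial E$, would also need justification, but it is secondary to the gap above.)
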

This theorem is known for $a=0$ (see \cite{simon87}) and the general proof is really similar to \cite{simon87}. Nonetheless, almost every step of the proof should be slightly modified so we prefer giving a full and self contained proof of Theorem \ref{simonp}, recalling some properties on the minimizers that are known but whose proof are often splitted into different papers. 

In what follows, we take $O = \Omega$ (we can reduce the latter since we only want a local result).

\subsection{Properties of minimizers}
Before proving Theorem \ref{simonp}, we first recall results on minimizers of \eqref{minp} that will be crucial in the proof. These results can be found in \cite{giusti84,ambrosio00} for $a=0$ (see also \cite{Mag00}) and in \cite{massari74,Massari75,massari752} for prescribed curvature in $L^p$. Since the last papers are more technical that what we need for constant curvature, we chose to give the proofs. We begin by the usual monotonicity formula (see \cite{massari752})

\begin{prop}[Monotonicity formula]
 Let $E$ be a minimizer of \eqref{minp}. Then, for every $s < r$ and every $x \in \partial E$, we have
$$\frac{\per(E,B_r(x))}{r^{n-1}} - \frac{\per(E,B_s(x))}{s^{n-1}} \gs - (n-1) \omega_n |a| (r-s). $$
\label{propmon}
\end{prop}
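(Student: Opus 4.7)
The plan is to derive the monotonicity by a cone comparison combined with the minimality of $E$, producing a differential inequality for $\rho \mapsto \per(E,B_\rho(x))/\rho^{n-1}$. Fix $x\in\partial E$ and write $P(\rho):=\per(E,B_\rho(x))$, $V(\rho):=|E\cap B_\rho(x)|$. By the standard slicing theorem for sets of finite perimeter, for a.e. $\rho\in(0,\dist(x,\partial\Omega))$ the sphere $\partial B_\rho(x)$ is a ``good slice'': $\H^{n-1}(\partial^\ast E\cap\partial B_\rho(x))=0$, the section $E\cap\partial B_\rho(x)$ has finite relative perimeter in $\partial B_\rho(x)$, and its relative reduced boundary coincides $\H^{n-2}$-a.e. with $\partial^\ast E\cap\partial B_\rho(x)$. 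Restricting to such $\rho$, I take as competitor
$$\tilde E := (E\setminus \bar B_\rho(x))\cup C_\rho,\qquad C_\rho := \{x+t(y-x) : y\in E\cap\partial B_\rho(x),\ t\in[0,1]\},$$
the set obtained by replacing $E\cap B_\rho(x)$ by the solid cone from $x$ over $E\cap\partial B_\rho(x)$. This is a compact perturbation of $E$ in $\Omega$, its relative perimeter inside $B_\rho(x)$ is the lateral area of the cone, $\frac{\rho}{n-1}\H^{n-2}(\partial^\ast E\cap\partial B_\rho(x))$, and one has the trivial volume bound $|\tilde E\cap B_\rho(x)|\ls\omega_n\rho^n$.

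Plugging $\tilde E$ into the minimality of $E$ and using $a(|\tilde E\cap B_\rho(x)|-V(\rho))\ls|a|\omega_n\rho^n$ gives
$$P(\rho)\;\ls\;\frac{\rho}{n-1}\H^{n-2}(\partial^\ast E\cap\partial B_\rho(x))+|a|\omega_n\rho^n.$$
The coarea formula applied to the $1$-Lipschitz function $y\mapsto|y-x|$ restricted to $\partial^\ast E$ yields $P'(\rho)\gs\H^{n-2}(\partial^\ast E\cap\partial B_\rho(x))$ for a.e. $\rho$, hence
$$P(\rho)\;\ls\;\frac{\rho}{n-1}P'(\rho)+|a|\omega_n\rho^n.$$
Dividing by $\rho^n/(n-1)$ and rearranging exhibits the left-hand side as a derivative:
$$\frac{d}{d\rho}\!\left[\frac{P(\rho)}{\rho^{n-1}}\right]=\frac{P'(\rho)}{\rho^{n-1}}-(n-1)\frac{P(\rho)}{\rho^n}\;\gs\;-(n-1)\omega_n|a|.$$
Integration from $s$ to $r$ produces the announced inequality.

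The main technical obstacle is the cone construction: justifying that $\tilde E$ really has relative perimeter $\frac{\rho}{n-1}\H^{n-2}(\partial^\ast E\cap\partial B_\rho(x))$ for a.e. $\rho$ rests on the slicing theorem for sets of finite perimeter, which identifies $\partial^\ast E\cap\partial B_\rho(x)$ with the relative boundary of the spherical slice $E\cap\partial B_\rho(x)$ up to $\H^{n-2}$-null sets. This is classical but delicate; once available, the presence of the constant prescribed curvature $a$ contributes only the additive correction $|a|\omega_n\rho^n$ coming from the trivial volume bound on the cone, producing the harmless error $-(n-1)\omega_n|a|(r-s)$ in the final monotonicity.
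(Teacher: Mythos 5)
Your argument is correct, and it is worth noting that the paper itself does not prove this proposition: it is quoted from Massari's work on boundaries of prescribed mean curvature in $L^p$ (the reference \cite{massari752}), precisely because the $L^p$ theory is more technical than what constant curvature requires. Your cone-comparison proof is the classical self-contained route for the constant-curvature case and matches the spirit of what the author says he wants: replace $E\cap B_\rho(x)$ by the cone over the spherical slice, use minimality, bound the curvature term brutally by $|a|\omega_n\rho^n$, control $\H^{n-2}(\partial^\ast E\cap\partial B_\rho)$ from below by $P'(\rho)$ via the coarea formula on the rectifiable set $\partial^\ast E$, and integrate the resulting differential inequality for $P(\rho)\rho^{1-n}$. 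Two standard points deserve one sentence each in a complete write-up. First, when you identify $\per(\tilde E;\overline B_\rho)$ with the lateral cone area, you need not only the slicing theorem but also that for a.e.\ $\rho$ the interior and exterior traces of $1_E$ on $\partial B_\rho$ coincide with $1_{E\cap\partial B_\rho}$ and that $\H^{n-1}(\partial^\ast E\cap\partial B_\rho)=0$, so that no extra perimeter is created on the sphere itself; this is part of the ``good radius'' selection and holds for a.e.\ $\rho$. Second, ``integration from $s$ to $r$'' of an inequality that holds only for the a.e.-pointwise derivative is not automatic for a general BV function; here it is legitimate because $P$ is nondecreasing, so the singular part of its distributional derivative is a nonnegative measure and, after multiplication by the smooth positive factor $\rho^{1-n}$, it only improves the inequality (alternatively, one can integrate the coarea bound directly between the radii $s$ and $r$ instead of differentiating). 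With these remarks your proof is complete and gives exactly the constant $-(n-1)\omega_n|a|(r-s)$ of the statement.
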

\begin{rem}
That formula explains why we restricted ourselves to the isotropic case. In the anisotropic non Riemannian case, this formula is known not to hold \cite{All74}.
\end{rem}

\begin{cor}
For all $x \in \partial E$ and $\dist(x,\partial \Omega) > r > 0$ we have
\begin{equation}
 r^{1-n} \int_{B_r(x)} |D1_E| \gs \omega_{n-1} - (n-1)\omega_n |a| r.
\label{lbper}
\end{equation}
\label{corlbper}
\end{cor}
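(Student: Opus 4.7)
The plan is to deduce the corollary directly from the monotonicity formula (Proposition \ref{propmon}) by letting the inner radius tend to zero, after first establishing the standard lower density bound
\begin{equation}
\liminf_{s\to 0^+} \frac{\per(E,B_s(x))}{s^{n-1}} \;\gs\; \omega_{n-1} \label{eq:lowdens}
\end{equation}
at every $x\in\partial E$.

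\medskip\noindent\textbf{Step 1: lower density estimate.} I would first prove \eqref{eq:lowdens} at every point of the reduced boundary $\partial^\ast E$. There, the classical blow-up argument of De Giorgi applies: the rescaled sets $(E-x)/s$ converge in $L^1_{\mathrm{loc}}$ to a half-space $H$ with outer normal $\nu(x)$, and the perimeters converge as well (using that $E$ is a $\per$-minimizer up to a lower-order volume term, so the monotonicity formula above prevents loss of mass), giving
$$\lim_{s\to 0^+}\frac{\per(E,B_s(x))}{s^{n-1}} = \per(H,B_1(0)) = \omega_{n-1}.$$
For a general $x\in\partial E=\spt(D1_E)$, I would approximate $x$ by points $x_k\in\partial^\ast E$: for every $\rho>0$, the lower semi-continuity of $F\mapsto \per(E,F)$ with respect to $F$ (in the sense of open sets) combined with the monotonicity formula applied at $x_k$ and passed to the limit yields \eqref{eq:lowdens}. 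Alternatively, one can combine the density estimate of Proposition \ref{densite1} with the relative isoperimetric inequality in $B_s(x)$ to directly bound $\per(E,B_s(x))$ from below by a constant multiple of $s^{n-1}$, and then sharpen the constant through the monotonicity formula.

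\medskip\noindent\textbf{Step 2: pass to the limit in the monotonicity formula.} For fixed $x\in\partial E$ and $r<\dist(x,\partial\Omega)$, Proposition \ref{propmon} gives, for every $0<s<r$,
$$\frac{\per(E,B_r(x))}{r^{n-1}} \;\gs\; \frac{\per(E,B_s(x))}{s^{n-1}} - (n-1)\omega_n|a|(r-s).$$
Taking $\liminf_{s\to 0^+}$ on the right-hand side and inserting \eqref{eq:lowdens}, I would obtain
$$\frac{\per(E,B_r(x))}{r^{n-1}} \;\gs\; \omega_{n-1} - (n-1)\omega_n|a|r,$$
which after multiplication by $r^{n-1}$ and using that $\int_{B_r(x)}|D1_E|=\per(E,B_r(x))$ is exactly \eqref{lbper}.

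\medskip\noindent\textbf{Main obstacle.} The non-trivial ingredient is Step 1 at a general point $x\in\spt(D1_E)\setminus\partial^\ast E$: the reduced boundary only accounts for $\H^{n-1}$-almost every point of $\partial E$, so one has to propagate the density bound from $\partial^\ast E$ to all of $\partial E$. For minimizers this is standard (and already built into the density estimates of Proposition \ref{densite1}), but it is the only step where minimality is really used beyond what is encoded in the monotonicity formula itself; once it is in place, the corollary is a one-line consequence of Proposition \ref{propmon}.
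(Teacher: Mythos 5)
Your proposal is correct and is essentially the intended derivation: the corollary is stated in the paper as a direct consequence of the monotonicity formula (Proposition \ref{propmon}), obtained exactly as in your Step 2 by letting $s\to 0$ and using the density lower bound $\omega_{n-1}$ at boundary points, with the extension from $\partial^\ast E$ to all of $\spt(D1_E)$ handled by approximation as you describe. The only remark is that in Step 1 minimality is not actually needed at reduced boundary points, since De Giorgi's blow-up theorem already gives $\per(E,B_s(x))/s^{n-1}\to\omega_{n-1}$ there for any set of finite perimeter; minimality (through the monotonicity formula and the density estimates) only enters when propagating the bound to general points of $\partial E$, as you correctly identify.
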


\begin{lem}{\cite[Th. 2]{Massari75}}
 Let $(E_\lambda)$ be a family of minimizers of \eqref{minp} with $a_\lambda$ ($\in \R$) instead of $a$, and let $E$ minimize \eqref{minp}. We assume that $E_\lambda  \to E$ in $\llo$ and that $a_\lambda \to a$. Then, for every bounded set $D$ (with Lipschitz boundary) such that
$$\int_{\partial D} |D1_E| = 0,$$
we have
$$\lim_\lambda \int_{\overline D} |D1_{E_\lambda}| = \int_{\overline D} |D1_E|.$$
\label{limTV}
\end{lem}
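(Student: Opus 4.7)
The strategy is to promote the $L^1_{\mathrm{loc}}$ convergence $E_\lambda \to E$ to vague convergence of the measures $|D1_{E_\lambda}|$ to $|D1_E|$. Once this is established, the conclusion follows from the portmanteau-type fact: if $\mu_\lambda \rightharpoonup^* \mu$ vaguely as Radon measures and $\mu(\partial D) = 0$, then $\mu_\lambda(\overline D) \to \mu(\overline D)$. I need a $\liminf$ inequality on open sets and a $\limsup$ inequality on compact sets.

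The $\liminf$ half is free: standard lower semicontinuity of the perimeter under $L^1_{\mathrm{loc}}$ convergence gives
$$\liminf_\lambda |D1_{E_\lambda}|(U) \;\geq\; |D1_E|(U)$$
for every open $U \subset \Omega$. Applied to $U = D^\circ$ and combined with $|D1_E|(\overline D) = |D1_E|(D^\circ)$ (a consequence of the hypothesis $|D1_E|(\partial D) = 0$), this yields $\liminf_\lambda \int_{\overline D} |D1_{E_\lambda}| \geq \int_{\overline D} |D1_E|$.

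For the matching $\limsup$ bound I use minimality. Fix a bounded Lipschitz open set $V$ with $\overline D \Subset V \Subset \Omega$ and $|D1_E|(\partial V) = 0$; such a $V$ can be selected among the sub-level sets $V_r := \{\dist(\cdot, \overline D) < r\}$ since only countably many values of $r$ can carry $|D1_E|$-mass on their boundary. Set $F_\lambda := (E_\lambda \setminus V) \cup (E \cap V)$, which differs from $E_\lambda$ on a set compactly contained in $\Omega$, so it is an admissible competitor. Minimality of $E_\lambda$ combined with the standard trace decomposition of the perimeter on $\partial V$ gives
$$|D1_{E_\lambda}|(\overline V) \;\leq\; |D1_E|(V) + \H^{n-1}\bigl(\partial V \cap (E \Delta E_\lambda)\bigr) + |a_\lambda|\,\bigl|V \cap (E \Delta E_\lambda)\bigr|,$$
after cancelling $|D1_{E_\lambda}|(\Omega \setminus \overline V)$ from both sides.

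The volume term vanishes as $\lambda \to \infty$ since $a_\lambda$ is bounded and $E_\lambda \to E$ in $L^1_{\mathrm{loc}}$. The trace term on $\partial V$ is the genuine difficulty and forces the slicing step: since $\int_{V_{r_1} \setminus V_{r_0}} |1_{E_\lambda} - 1_E|\, dx \to 0$, Fubini along the level sets of $\dist(\cdot, \overline D)$ produces an $r \in (r_0, r_1)$ for which $\H^{n-1}(\partial V_r \cap (E \Delta E_\lambda)) \to 0$ along a subsequence; by a diagonal extraction, one adjusts $V$ so that this error vanishes in the limit. Passing to $\limsup_\lambda$ gives $\limsup_\lambda |D1_{E_\lambda}|(\overline D) \leq |D1_E|(V)$, and finally shrinking $V \downarrow \overline D$ through such good sets and using outer regularity of $|D1_E|$ together with $|D1_E|(\partial D) = 0$ yields $\limsup_\lambda |D1_{E_\lambda}|(\overline D) \leq |D1_E|(\overline D)$.

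The main obstacle is exactly this trace term $\H^{n-1}(\partial V \cap (E \Delta E_\lambda))$: $L^1_{\mathrm{loc}}$ convergence alone does not control codimension-one quantities, and the whole argument rests on choosing the sliced hypersurface $\partial V$ transverse to the asymptotic support of $E_\lambda \Delta E$.
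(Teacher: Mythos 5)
Your argument is correct and is the standard proof of this convergence result: lower semicontinuity gives the liminf on open sets, and for the limsup you compare $E_\lambda$ with the glued competitor $(E_\lambda\setminus V)\cup(E\cap V)$, using minimality, the vanishing volume term, and a coarea/Fubini choice of a good slice $\partial V$ (transverse to $E\Delta E_\lambda$ in the limit) to kill the trace term, then shrink $V$ to $\overline D$ using $|D1_E|(\partial D)=0$. The paper does not prove this lemma itself but quotes it from \cite[Th.~2]{Massari75}, whose proof proceeds along exactly these comparison-and-slicing lines, so your proposal matches the intended argument.
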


The following theorem, usually called \emph{improvement of flatness}, is the key result in the regularity proof. It can be found in \cite{massari752}.
\begin{thm}[De Giorgi]
 Let $E$ minimize \eqref{minp} and $\alpha \in (0,1)$. Then, there exits a constant $\sigma(n,\alpha,|a|)$ such that for all $\eta \ls \sigma$ and $r \ls \eta^2$, if $E$ satisfies
$$\int_{\overline B_r(x)} |D1_E| - \left \vert \int_{\overline B_r(x)} D1_E \right \vert \ls \eta r^{n-1},$$
then, we have
$$\int_{\overline B_{\alpha r}(x)} |D1_E| - \left \vert \int_{\overline B_{\alpha r}(x)} D1_E \right \vert \ls  \alpha^{1/2} \eta (\alpha r)^{n-1}.$$
\label{flatness}
\end{thm}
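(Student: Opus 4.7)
The plan is to prove Theorem \ref{flatness} by the classical De Giorgi blow-up/contradiction argument, adapted to accommodate the constant curvature term. Suppose the conclusion fails: then there exist sequences $\eta_k \downarrow 0$, $r_k \ls \eta_k^2$, points $x_k$ and minimizers $E_k$ of \eqref{minp} (all with the same $a$) such that the hypothesis holds at scale $r_k$ but the conclusion at scale $\alpha r_k$ is violated. Rescale by $\tilde E_k := (E_k - x_k)/r_k$; by the scaling of perimeter and volume, $\tilde E_k$ minimizes $\per(\cdot) + (a r_k)|\cdot|$ against compact perturbations in the rescaled domain, and the curvature parameter $a r_k$ tends to zero. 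The excess density
$$\mathbf E(y,\rho) := \rho^{1-n}\left[\int_{\overline B_\rho(y)}|D1_E| - \left|\int_{\overline B_\rho(y)} D1_E\right|\right]$$
is scale invariant, so $\mathbf E_{\tilde E_k}(0,1) \ls \eta_k$ while $\mathbf E_{\tilde E_k}(0,\alpha) > \alpha^{1/2}\eta_k$.

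Next, produce a blow-up limit. Corollary \ref{corlbper} (uniform in $k$, since $|a r_k|$ is small) together with an upper perimeter bound obtained by comparison with a ball yields, up to a subsequence, $\tilde E_k \to E_\infty$ in $\llo$. Using $a r_k \to 0$ and Lemma \ref{limTV}, $E_\infty$ is a (global) minimizer of the bare perimeter, i.e.\ a minimal set. Passing the hypothesis to the limit by Lemma \ref{limTV} and the weak-$\ast$ convergence of the vector measures $D1_{\tilde E_k}$ (choosing radii slightly perturbed so that $|D1_{E_\infty}|$ does not charge the relevant sphere, which holds for a.e.\ radius) gives
$$\int_{\overline B_1} |D1_{E_\infty}| = \left|\int_{\overline B_1} D1_{E_\infty}\right|,$$
so the measure-theoretic unit normal of $E_\infty$ is constant on $B_1$, and $E_\infty \cap B_1$ is a half-space through $0$ by De Giorgi's structure theorem.

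The delicate step is the quantitative rate: the target $\alpha^{n-1/2}\eta_k$ vanishes along the sequence, so mere convergence of the excess to $0$ is insufficient. The standard remedy is to linearize around the half-space. After rotating so $E_\infty \cap B_1 = \{x_n < 0\}$, a graphicality lemma based on the density estimates of Proposition \ref{densite1} and the flatness excess represents $\partial \tilde E_k$ in a slightly smaller ball as the graph of a function $u_k$ with $\|u_k\|_\infty^2$ and $\int|\nabla u_k|^2$ both of order $\eta_k$. The rescaled functions $v_k := u_k/\sqrt{\eta_k}$ are uniformly bounded in $H^1$ and weakly converge to some $v_\infty$; the Euler--Lagrange equation satisfied by $u_k$, whose right-hand side is $a r_k = O(\eta_k^2)$, passes to the limit and forces $v_\infty$ to be harmonic. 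Harmonic functions enjoy the corresponding flatness decay (with any constant strictly smaller than $\alpha^{1/2}$ for $\alpha$ small, via interior Campanato-type estimates), and Lemma \ref{limTV} transfers the improved estimate back to $\tilde E_k$ for $k$ large, contradicting $\mathbf E_{\tilde E_k}(0,\alpha) > \alpha^{1/2}\eta_k$.

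The main obstacle is precisely this linearization: one must prove the graphicality of $\partial \tilde E_k$ as a consequence of the smallness of the flatness excess (requiring the clean-ball property of Corollary \ref{cleanball} together with a Lipschitz/area-excess approximation), derive a Caccioppoli-type energy inequality for $u_k$ with error term $O(a r_k)$, and identify the weak $H^1$-limit as harmonic with the correct convergence rate. The two remaining technical points, namely the closure of minimizers of \eqref{minp} under $\llo$ limits as the curvature parameter varies in a bounded set, and the passage of the flatness defect to the limit, follow from Lemma \ref{limTV} and Proposition \ref{propmon} with little additional work.
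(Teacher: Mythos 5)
First, a point of comparison: the paper does not prove Theorem \ref{flatness} at all; it is quoted from \cite{massari752}, whose argument is exactly the De Giorgi excess-decay scheme adapted to prescribed curvature. So your overall strategy (contradiction--compactness, blow-up to a half-space, linearization and harmonic approximation, with the coupling $r\le\eta^2$ making the curvature term lower order) is the right one and is essentially the method of the cited source. The set-up is also sound: the negation correctly produces $\eta_k\to0$, $r_k\le\eta_k^2$, and after rescaling the curvature parameter becomes $a r_k\to0$ while the normalized excess is scale invariant.

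However, as written this is a road map rather than a proof: every quantitatively decisive step is named but not established, and you say so yourself (``one must prove\dots''). Concretely: (i) the Lipschitz/graph approximation of $\partial\tilde E_k$ with area defect $o(\eta_k)$ is the heart of the matter and is not reduced to anything stated in the paper (Proposition \ref{densite1} and Corollary \ref{cleanball} give density bounds, not graphicality with excess-controlled error); (ii) the transfer of the harmonic decay back to the sets cannot be done by Lemma \ref{limTV}, which only gives convergence of perimeters of an $L^1_{\mathrm{loc}}$-convergent sequence of minimizers with no rate relative to $\eta_k$ --- what is needed is a Caccioppoli/reverse Poincar\'e inequality for minimizers of \eqref{minp} bounding the excess at radius $\alpha$ by the height flatness plus a curvature error, together with the upgrade from weak $H^1$ convergence of $v_k$ to strong $L^2_{\mathrm{loc}}$ convergence; (iii) Lemma \ref{limTV} as stated presupposes that the limit is itself a minimizer, so the closure of the class of minimizers under $L^1_{\mathrm{loc}}$ limits with $a_k\to a$ must be argued separately (standard, but missing); and (iv) your harmonic-decay step is claimed only ``for $\alpha$ small'', whereas the theorem asserts the decay for every $\alpha\in(0,1)$ with $\sigma$ depending on $\alpha$. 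The last point is repairable (for harmonic $w$ one has $\int_{B_\alpha}|w-w(0)|^2\le\alpha^{n+2}\int_{B_1}|w-w(0)|^2$ with constant $1$ for all $\alpha\in(0,1)$, and $\alpha^{n+2}<\alpha^{n-1/2}$), but as written your argument does not cover the stated range, and the constant in a generic ``Campanato-type estimate'' would ruin the factor $\alpha^{1/2}$ for $\alpha$ close to $1$. In short: right approach, same as the cited reference, but the proposal leaves precisely the hard estimates unproved, so it cannot stand in for the proof that the paper delegates to \cite{massari752}.
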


\subsection{blowups}
In this subsection, we analize the convergence of blowups to a minimal cone. This is crucial in the proof of the comparison principle. In particular, we prove the 
\begin{thm}
 Let $E$ minimize \eqref{minp}. Then the sets 
 $$E_\lambda := \{ x_0 + \frac{1}{\lambda}( x - x_0) \, | \, x \in E\}$$
 converge, in Hausdorff sense and up to a subsequence $\lambda_n \to 0$, to some minimizing cone $C$. In addition, for all $K$ compact of $\reg C$, there exists a neighborhood $V$ of $K$ such that $E_\lambda \cap V$ converges to $C \cap V$ in $\mathcal C^2(V)$.
\label{massaripepe}
\end{thm}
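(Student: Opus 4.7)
The strategy is a two step blow-up argument, following the pattern of De Giorgi / Federer / Massari: first obtain a limit object in $L^1_{\mathrm{loc}}$ and upgrade the convergence to Hausdorff, then use the improvement-of-flatness theorem and elliptic regularity on the sheets near regular points.

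First I would set up the rescaling. If $E$ minimizes \eqref{minp} in $\Omega$, then one checks from the scaling of perimeter and volume that $E_\lambda = x_0 + \lambda^{-1}(E-x_0)$ minimizes
\[
\per(\cdot ; U) + (a\lambda)\,|\cdot \cap U|
\]
with respect to compact perturbations in $U \subset \Omega_\lambda := x_0 + \lambda^{-1}(\Omega - x_0)$. Thus $E_\lambda$ has prescribed constant mean curvature $a_\lambda := a\lambda$, which tends to $0$ as $\lambda \to 0$, and $x_0 \in \partial E_\lambda$ for every $\lambda$. The monotonicity formula (Proposition \ref{propmon}) applied on $E$ gives that $r \mapsto \per(E,B_r(x_0))/r^{n-1} + (n-1)\omega_n |a| r$ is non-decreasing, and since $\per(E,B_r(x_0))/r^{n-1} = \per(E_\lambda,B_{r/\lambda}(x_0))/(r/\lambda)^{n-1}$ we get uniform bounds on $\per(E_\lambda; B_R(x_0))$ for any fixed $R$. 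By $BV$ compactness, up to a subsequence $E_{\lambda_n} \to C$ in $L^1_{\mathrm{loc}}(\Rn)$.

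Next I would show that $C$ is a minimizer of the pure perimeter. By Lemma \ref{limTV}, $\per(E_{\lambda_n}, \cdot) \to \per(C,\cdot)$ on nicely chosen balls, while $a_{\lambda_n}|E_{\lambda_n} \cap U| \to 0$ since $a_{\lambda_n} \to 0$ and $|E_{\lambda_n} \cap U|$ is bounded. Standard cut-and-paste using Proposition \ref{unint} then transfers any compact perturbation of $C$ back to an admissible competitor for some $E_{\lambda_n}$, yielding minimality of $C$ for the perimeter alone. To show that $C$ is a cone, I would use the equality case of the (error-free) monotonicity formula for perimeter minimizers: the density $\Theta(E,x_0,r) := \per(E,B_r(x_0))/r^{n-1}$ satisfies $\Theta(E_{\lambda_n},x_0,r) = \Theta(E,x_0,\lambda_n r)$, so in the limit $\Theta(C,x_0,r)$ is independent of $r$. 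The equality case in the monotonicity formula for $a=0$ forces $C$ to be invariant under dilations centred at $x_0$. Finally, the uniform two-sided density estimates of Proposition \ref{densite1} and Corollary \ref{cleanball} (valid uniformly in $\lambda$ because $\|a_\lambda\|_\infty$ is bounded) are exactly what is needed to upgrade $L^1_{\mathrm{loc}}$ convergence to Hausdorff convergence of the boundaries: a point $y$ in the Hausdorff limit of $\partial E_{\lambda_n}$ has density bounded away from $0$ and $1$ in $C$, and conversely every point of $\partial C$ must be approached by $\partial E_{\lambda_n}$ (otherwise density estimates would be violated).

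For the local $\mathcal C^2$ convergence on a compact $K \subset \reg C$, I would argue as follows. At each point $y \in K$, $\partial C$ is locally a smooth graph, hence for any $\alpha \in (0,1)$ and small $r$ the flatness excess on $B_r(y)$ is arbitrarily small. By the Hausdorff convergence of $\partial E_{\lambda_n}$ to $\partial C$ and the convergence of the perimeters (Lemma \ref{limTV}), the same flatness excess bound holds for $\partial E_{\lambda_n}$ on slightly smaller balls, uniformly in $n$ for $n$ large. The De Giorgi improvement-of-flatness theorem (Theorem \ref{flatness}), applied iteratively, then yields that $\partial E_{\lambda_n}$ is a $\mathcal C^{1,\beta}$ graph on a fixed neighborhood $V$ of $y$ with uniform $\mathcal C^{1,\beta}$ bounds. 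On such a graph representation $u_n$, the prescribed mean curvature equation
\[
-\div\!\left(\frac{\nabla u_n}{\sqrt{1+|\nabla u_n|^2}}\right) = a_{\lambda_n}
\]
is uniformly elliptic with bounded right-hand side, so Schauder estimates upgrade the bound to $\mathcal C^{2,\beta}$. Passing to the limit, the graphs $u_n$ converge to the graph $u_\infty$ of $C$ in $\mathcal C^2$. Covering $K$ by finitely many such neighborhoods gives the desired local $\mathcal C^2$ convergence.

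The main obstacles I anticipate are (i) proving that $C$ is a \emph{cone} — this hinges on a clean use of the monotonicity formula combined with the vanishing of the $a_\lambda$-error as $\lambda \to 0$, and (ii) the passage from $L^1_{\mathrm{loc}}$ to $\mathcal C^2$ convergence near regular points, which requires chaining flatness excess bounds, the iterative improvement-of-flatness (to reach $\mathcal C^{1,\beta}$), and Schauder estimates for the prescribed curvature equation. The other ingredients (density estimates, perimeter semicontinuity, BV compactness) play standard roles.
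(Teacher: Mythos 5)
Your plan is correct and follows essentially the same route as the paper: the blow-ups have prescribed curvature $a\lambda \to 0$, compactness plus the uniform density estimates give Hausdorff convergence to a perimeter-minimizing cone, Lemma \ref{limTV} transfers smallness of the excess from $C$ to $E_\lambda$ near points of $\reg C$, and the iterated improvement of flatness (Theorem \ref{flatness}) together with elliptic regularity yields the local $\mathcal C^2$ convergence. The only cosmetic difference is at the final step, where you go through uniform $\mathcal C^{1,\beta}$ graph bounds and Schauder estimates for the prescribed mean curvature equation, while the paper shows that nearby boundary points lie in the reduced boundary (via the Cauchy-sequence argument on averaged normals) and then invokes analyticity of the constant-mean-curvature graphs -- the same classical machinery.
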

We first state a standard result about the Hausdorff convergence, which is obtained by first showing a $L^1$ convergence (see \cite{giusti84}) and using the density estimates to obtain the Hausdorff one.
\begin{prop}
Let $x_0 \in \partial E$. The sets $E_\lambda$ converge to a minimizing cone $C$ in Hausdorff distance.
\end{prop}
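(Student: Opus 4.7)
The plan is to extract an $L^1_{\mathrm{loc}}$ subsequential limit of the blow-ups, identify it as a minimal set, upgrade it to a cone via the monotonicity formula, and finally upgrade the mode of convergence to Hausdorff using the uniform density estimates.

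\emph{Compactness and minimality of the limit.} First I would observe that by a direct change of variables, $E_\lambda$ minimizes
$$F \mapsto \per(F;B_R) + a_\lambda |F \cap B_R|, \qquad a_\lambda := a\lambda,$$
against compact perturbations in any ball $B_R$, and that $|a_\lambda| \ls |a|$ for $\lambda \ls 1$. Comparing $E_\lambda$ with $E_\lambda \cup B_R$ and $E_\lambda \setminus B_R$ via Proposition \ref{unint} yields a uniform bound $\per(E_\lambda; B_R) \ls c(n)R^{n-1} + c(n,|a|)R^n$, so standard $BV$ compactness extracts a subsequence $\lambda_n \to 0$ and a set $C$ with $1_{E_{\lambda_n}} \to 1_C$ in $\llo(\Rn)$. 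For a compact perturbation $G$ of $C$ inside $B_R$ with $\int_{\partial B_R}|D1_C|=0$, I would build competitors for $E_{\lambda_n}$ by gluing $G$ inside $B_{R-\delta}$ to $E_{\lambda_n}$ outside, apply lower semicontinuity of perimeter and Lemma \ref{limTV} at a generic $\delta$, and use $a_{\lambda_n} \to 0$ to kill the volume term, obtaining $\per(C;B_R) \ls \per(G;B_R)$. Hence $C$ is a minimal set.

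\emph{Identifying $C$ as a cone with vertex $x_0$.} By Proposition \ref{propmon}, the corrected density
$$r \mapsto \frac{\per(E;B_r(x_0))}{r^{n-1}} + (n-1)\omega_n |a| r$$
is nondecreasing in $r$, hence admits a limit $\theta_0 \in [0,\infty)$ as $r \to 0^+$. The scaling identity
$$\frac{\per(E_\lambda; B_r(x_0))}{r^{n-1}} = \frac{\per(E;B_{\lambda r}(x_0))}{(\lambda r)^{n-1}}$$
forces this density of $E_\lambda$ to converge to $\theta_0$ for every fixed $r > 0$. Applying Lemma \ref{limTV} along $\lambda_n$ for a.e. $r$ passes the identity to $C$: $\per(C;B_r(x_0))/r^{n-1} = \theta_0$ for a.e. $r > 0$. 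Since $C$ itself is a minimal set (curvature $0$), its own monotonicity formula has no correction, so $r \mapsto \per(C;B_r(x_0))/r^{n-1}$ is monotone; combined with constancy on a dense set this gives constancy on $(0,\infty)$. The classical equality case of the monotonicity formula for minimal sets (see \cite{giusti84}) then forces $C$ to be a cone with vertex $x_0$.

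\emph{Upgrading $\llo$ convergence to Hausdorff convergence.} Since $|a_{\lambda_n}|$ is uniformly bounded by $|a|$, Corollary \ref{cleanball} yields constants $q, r_0 > 0$ independent of $n$ such that, for every $x \in \partial E_{\lambda_n}$ and $r \ls r_0$,
$$ q \ls \frac{|E_{\lambda_n} \cap B_r(x)|}{|B_r|} \ls 1 - q.$$
If $y_n \in \partial E_{\lambda_n}$ with $y_n \to y$, these two-sided density bounds pass through the $\llo$ convergence to give $q \ls |C \cap B_r(y)|/|B_r| \ls 1 - q$, so $y \in \partial C$. Conversely, if $y \in \partial C$, the same density estimate applied to $C$ at $y$ together with $\llo$ convergence forces $\partial E_{\lambda_n}$ to meet any fixed neighborhood of $y$ for $n$ large. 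Combined with uniform local perimeter bounds, these two facts are exactly local Hausdorff convergence of $\partial E_{\lambda_n}$ to $\partial C$, which is the claim.

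The main obstacle is the cone conclusion: carefully tracking the scaled density through the limit and invoking the rigidity part (equality $\Rightarrow$ cone) of the classical monotonicity formula. Steps 1 and 4 are then essentially soft consequences of the uniform density and perimeter bounds.
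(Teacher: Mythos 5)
Your proposal is correct and follows essentially the same route the paper intends (and only sketches by citation): $L^1_{\mathrm{loc}}$ compactness and minimality of the blow-up limit, the monotonicity formula with its equality case to identify the limit as a cone, and the uniform density estimates to upgrade $L^1$ to local Hausdorff convergence of the boundaries. One quibble: in the step proving minimality of the limit, Lemma \ref{limTV} cannot be invoked as stated, since it presupposes that the limit set is already a minimizer; however, your ``generic $\delta$'' slicing combined with lower semicontinuity of the perimeter and $a_{\lambda_n}\to 0$ is exactly the standard De Giorgi comparison argument, so nothing essential is missing.
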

Now, let us inverstigate the regularity of a minimizing set which is close to $\reg C$.
\begin{prop}
Let $K$ be a compact subset of $\reg C$. Then, for every $x_0 \in K$, there exists a neighborhood $W$ of $x_0$, whose size depends only on $\Lambda = |a|$, the dimension and $K$, such that $E_\lambda \cap W$ is a $\mathcal C^2$ surface.
\end{prop}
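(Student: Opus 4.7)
The plan is to localize near a point $x_0\in K$ and establish the existence of $W$ by the following chain: (i) flatness of $C$ at $x_0$ at a controlled scale; (ii) transfer of this flatness to the approximating $E_\lambda$ via Lemma \ref{limTV}; (iii) iteration of the De Giorgi improvement of flatness (Theorem \ref{flatness}) to obtain $\mathcal C^{1,\alpha}$ regularity, which one then bootstraps to $\mathcal C^2$ via the prescribed mean curvature equation.

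Since $C$ is a $\mathcal C^2$ hypersurface on an open neighborhood of the compact set $K\subset\reg C$, it has uniformly continuous tangent normals there. Hence, for every $\eta>0$, one can find a radius $r_K=r_K(\eta,K)>0$ such that
$$ \int_{\overline{B_r(x_0)}} |D1_C| - \left|\int_{\overline{B_r(x_0)}} D1_C\right| \ \ls \ \frac{\eta}{2}\,r^{n-1} $$
for every $x_0\in K$ and every $r\ls r_K$. The rescaled sets $E_\lambda$ minimize \eqref{minp} with parameter $a_\lambda=\lambda a\to 0$, and converge to $C$ in $L^1_{\mathrm{loc}}$ by the previous proposition. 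Choosing $r$ so that $|D1_C|(\partial B_r(x_0))=0$, Lemma \ref{limTV} gives $\int_{\overline{B_r(x_0)}}|D1_{E_\lambda}|\to\int_{\overline{B_r(x_0)}}|D1_C|$, and together with the weak-$\ast$ convergence $D1_{E_\lambda}\rightharpoonup D1_C$ this yields, for $\lambda$ small enough,
$$ \int_{\overline{B_r(x_0)}} |D1_{E_\lambda}| - \left|\int_{\overline{B_r(x_0)}} D1_{E_\lambda}\right| \ \ls \ \eta\,r^{n-1}. $$

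Choose now $\eta=\sigma(n,1/2,\Lambda)$ from Theorem \ref{flatness} and $r\ls\min(r_K,\eta^2)$. The hypothesis of the improvement of flatness is satisfied by $E_\lambda$ at $x_0$ at scale $r$, uniformly for $\lambda$ small. Iterating Theorem \ref{flatness} on dyadic scales produces the geometric decay of the flatness excess, which by the usual De Giorgi arguments (see e.g. \cite[Ch.~6--7]{giusti84} and \cite{massari752}) gives $\mathcal C^{1,\alpha}$ regularity of $\partial E_\lambda$ in a ball $W=B_\rho(x_0)$ whose radius $\rho$ depends only on $r$, and therefore only on $K$, $n$, and $\Lambda$. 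In $W$, $\partial E_\lambda$ is then the graph of a $\mathcal C^{1,\alpha}$ function $h_\lambda$ satisfying the quasilinear elliptic equation
$$ \div\!\left(\frac{\nabla h_\lambda}{\sqrt{1+|\nabla h_\lambda|^2}}\right) = a_\lambda, $$
and elliptic Schauder theory upgrades $h_\lambda$ to $\mathcal C^{2,\alpha}$, so that $E_\lambda\cap W$ is a $\mathcal C^2$ surface as claimed.

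The only delicate point in this argument is the quantitative transfer of flatness from $C$ to $E_\lambda$: mere $L^1$ or Hausdorff convergence of sets does not yield smallness of the excess, so one really needs Lemma \ref{limTV} to pass to the limit in the perimeter measure (itself a consequence of $a_\lambda\to 0$ and the minimality of both $E_\lambda$ and $C$). Once this is granted, the uniformity of $W$ over $K$ is automatic, since the De Giorgi constant depends only on $n$ and $\Lambda$, while the flatness radius $r_K$ is uniform on the compact set $K$.
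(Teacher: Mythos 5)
Your argument is correct and follows the same skeleton as the paper's proof: make the excess of $C$ small at a fixed scale, transfer this smallness to $E_\lambda$ via Lemma \ref{limTV} (plus convergence of the vector-valued integrals), and then run the De Giorgi improvement of flatness (Theorem \ref{flatness}) to conclude regularity, with the size of the good ball controlled by $n$, $\Lambda$ and $K$ only. The differences are in how the two halves are carried out. For the initial flatness you use the uniform $\mathcal C^2$ regularity of $\reg C$ near $K$, which gives a radius $r_K$ uniform in $x_0\in K$ at once, whereas the paper gets it pointwise from the monotonicity formula for $C$ with $a=0$; both are fine, and your version makes the uniformity in $x_0$ more transparent. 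For the second half you invoke the standard $\varepsilon$-regularity theorem (Giusti, Massari) as a black box and then bootstrap with Schauder, while the paper deliberately reproves that step -- shifting the center to nearby boundary points $x_\lambda$ using the monotonicity of the excess (hence the choice $\varepsilon=\sigma/2^{n-1}$), the estimate \eqref{dnorm}, the lower perimeter bound of Corollary \ref{corlbper}, and the Cauchy-sequence argument for the averaged normals -- precisely because the original proofs are scattered and the paper needs the neighborhood's size to be traceable. One point you should make explicit if you want the argument self-contained: iterating Theorem \ref{flatness} at the single center $x_0$ (which need not even lie on $\partial E_\lambda$) does not by itself give regularity in a ball; the excess must be small in balls centered at every point of $\partial E_\lambda$ near $x_0$, which follows from the monotonicity of the excess under inclusion of balls at the cost of a dimensional factor in the choice of $\eta$ -- this is exactly the step the paper writes out, and it is contained in, but hidden by, your citation of the ``usual De Giorgi arguments''.
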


\begin{proof}
This is proven in \cite[Th. 3]{Massari75}. Since the whole proof uses several papers (\cite{massari74,Massari75, massari752}) and does not provide information on the uniformity of the convergence, we reproduce it here. By compactness, it is enough to show that for every $x_0$, there exists a neighborhood $W$ of $x_0$ such that every $x_\lambda \in \partial E_\lambda \cap W$ belongs to $\reg E_\lambda$.
 
Since $x_0 \in \partial^\ast C$ and using the monotonicity formula for $C$ (with $a=0$), we conclude that
$$\lim_{r\to 0}r^{1-n} \left( \int_{B_r(x_0)} |D1_C| - \left \vert \int_{B_r(x_0)} D1_C \right \vert \right) = 0.$$

Choosing $r$ such that $B_r \subset K$ and
$$\int_{\partial B_r} |D1_C| = 0,$$ Lemma \ref{limTV} shows that
$$\lim_{\lambda \to \infty} \int_{\overline B_r} |D1_{E_{\lambda}}| = \int_{\overline B_r} |D1_C|.$$

In addition, using the relation (trace theorem)
$$\int_{B_r(x)} D1_{E_\lambda} = \int_{\partial B_r(x)} 1_{E_\lambda} (y) \frac{y-x}{|y-x|} dH^{n-1}(y),$$
we obtain, for almost every $r$,
$$\lim_{\lambda \to \infty} \left \vert \int_{\overline{B_r}(x_0)} D1_{E_\lambda} \right \vert = \left \vert \int_{\overline{B_r}(x_0)} D1_{C} \right \vert.$$
As a result, for every $\eps >0$, one can choose $r_0$ and $\lambda_0$ such that for all $r \ls r_0$ and $\lambda \ls \lambda_0$,
$$ r^{1-n} \left( \int_{\overline B_r(x_0)} |D1_{E_\lambda}| - \left \vert \int_{\overline B_r(x_0)} D1_{E_\lambda} \right \vert \right) \ls \varepsilon.$$

In particular, with $\varepsilon = \frac{\sigma}{2^{n-1}}$, where $\sigma$ is the constant in Theorem \ref{flatness} (corresponding to some $\alpha < 1$ that is considered fixed in what follows), we fix $\hat r \ls r_0.$ If $r_\lambda = |x_0-x_\lambda|$ for any sequence $x_\lambda \to x_0$, and choosing $\lambda_0$ such that for $\lambda \ls \lambda_0$, $r_{\lambda} \ls \frac {\hat r}2$, we have,
$$\left( \int_{\overline B_{\hat r}(x_0)} |D1_{E_\lambda}| - \left \vert \int_{\overline B_{\hat r}(x_0)} D1_{E_\lambda} \right \vert \right) \ls \sigma (\hat r-r_\lambda)^{n-1}.$$
Then, we recall that $B_{\hat r-r_\lambda}(x_\lambda) \subset B_{\hat r}(x_0)$ and notice that the integral on the left is monotone with respect to the inclusion (because for every $A \subset B_{\hat r}(x_0)$,
$$ \int_{\overline B_{\hat r}(x_0)} |D1_{E_\lambda}| - \left \vert \int_{\overline B_{\hat r}(x_0)} D1_{E_\lambda} \right \vert \gs 0 \quad ),$$
$$\int_{B_{\hat r-r_\lambda}(x_\lambda)} |D1_{E_\lambda}| - \left \vert \int_{\overline B_{\hat r-r_\lambda}(x_\lambda)} D1_{E_\lambda} \right \vert \ls \sigma (\hat r-r_\lambda)^{n-1}.$$

Let us show now that the last inequality implies that $x_\lambda \in \partial^\ast E_\lambda,$ that means there exists
\begin{equation}
\nu(x):= \lim_{r \to 0} \frac{\int_{B_r(x_\lambda)} D1_{E_\lambda}}{\int_{B_r(x_\lambda)} | D1_{E_\lambda}| } , \quad \text{with }|\nu(x)| = 1.
\end{equation}
This is \cite[Lemma 2.2]{massari752}. We introduce the notation 
$$\nu_r = \frac{\int_{B_r(x_\lambda)} D1_{E_\lambda}}{\int_{B_r(x_\lambda)} | D1_{E_\lambda}| }.$$
\begin{lem}
For every $s < r < r_0$, we have
\begin{equation}\left \vert \nu_r - \nu_s \right \vert \ls 2 \left( \frac{\int_{B_r(x_\lambda)} |D1_{E_\lambda}| - \left \vert \int_{B_r(x_\lambda)} D1_{E_\lambda} \right \vert}{\int_{B_s(x_\lambda)} | D1_{E_\lambda}| } \right)^{1/2}.
\label{dnorm}
\end{equation}
\end{lem}
\begin{proof}
Let $u$ and $v$ be smaller than 1, we have $|u-v|^2 \ls 2-2uv.$ As a result, 
$$|\nu_r - \nu_s| \ls \sqrt 2 \left( 1- \frac{\int_{B_r(x_\lambda)} D1_{E_\lambda}}{\int_{B_r(x_\lambda)} | D1_{E_\lambda}| } \cdot \frac{\int_{B_s(x_\lambda)} D1_{E_\lambda}}{\int_{B_s(x_\lambda)} | D1_{E_\lambda}| }\right)^{1/2},$$
which implies
\begin{align*} \left \vert \nu_r - \nu_s \right \vert & \ls \left( \frac{2}{\int_{B_s(x_\lambda)} | D1_{E_\lambda}| } \left( \int_{B_s(x_\lambda)} |D1_{E_\lambda}| - \frac{ \int_{B_r(x_\lambda)} D1_{E_\lambda} \cdot \int_{B_s(x_\lambda)} D1_{E_\lambda}}{\int_{B_r(x_\lambda)} |D1_{E_\lambda}|} \right) \right)^{1/2} \\
 &\ls \left( \frac{2}{\int_{B_s(x_\lambda)} | D1_{E_\lambda}| } \left( \int_{B_r(x_\lambda)} |D1_{E_\lambda}| - \frac{ \int_{B_r(x_\lambda)} D1_{E_\lambda} \cdot \int_{B_r(x_\lambda)} D1_{E_\lambda}}{\int_{B_r(x_\lambda)} |D1_{E_\lambda}|} \right) \right)^{1/2}.
\end{align*}
The last inequality is obtained using that for all $|\eta| \ls 1$ and $s \ls r$,
$$ \int_{B_s} |D1_{E_\lambda}| - \eta \cdot \int_{B_s} D1_{E_\lambda} \ls \int_{B_r} |D1_{E_\lambda}| - \eta \cdot \int_{B_r} D1_{E_\lambda}.$$
Indeed, for every $A \subset \Rn$ bounded, we have 
$$ \int_A |D1_{E_\lambda}|- \eta \cdot \int_{B_s} D1_{E_\lambda} \gs 0.$$

Finally, we get
\begin{equation*}\left \vert \nu_r - \nu_s \right \vert  \ls 2 \left( \frac{ \int_{B_r(x_\lambda)} |D 1_{E_\lambda}|}{\int_{B_s(x_\lambda)} | D1_{E_\lambda}| } \cdot \frac{\int_{B_r(x_\lambda)} |D1_{E_\lambda}| - \left \vert \int_{B_r(x_\lambda)} D1_{E_\lambda}\right \vert }{\int_{B_r(x_\lambda)} | D1_{E_\lambda}| } \right)^{1/2}
\end{equation*}
which yields \eqref{dnorm}.
\end{proof}

We will prove that $(\nu_{\alpha^k r})_{k \in \mathbb N}$ is a Cauchy sequence. Using \eqref{dnorm}, we have
$$|\nu_{\alpha^{k+m}r} - \nu_{\alpha^k r} | \ls 2 \sum_{j=0}^{m-1} \left \vert \frac{\int_{B_{\alpha^{k+j}r}(x_\lambda)} |D1_{E_\lambda}| - \int_{B_{\alpha^{k+j}r}(x_\lambda)} D1_{E_\lambda}}{\int_{B_{r\alpha^{k+j+1}}(x_\lambda)} |D1_{E_\lambda}|} \right \vert^{1/2}. $$
Thanks to Corollary \ref{corlbper}, for $r < \frac{\omega_{n-1}}{2 (n-1) \omega_n |a|}$, we have
\begin{equation} \int_{B_{\alpha^ir}(x_\lambda)} |D1_{E_\lambda}| \gs \frac{\omega_{n-1}}{2} r^{n-1} \alpha^{i(n-1)}.  \label{eqpfmas} \end{equation}
Now, Theorem \ref{flatness} implies that for $r \ls \hat r - r_\lambda$
\begin{equation}\int_{\overline B_{\alpha^i r}(x)} |D1_{E_\lambda}| - \left \vert \int_{\overline B_{\alpha^i r}(x)} D1_{E_\lambda} \right \vert \ls \alpha^{i/2} \sigma (\alpha^i r)^{n-1}.\label{eqpfmas2}\end{equation}
As a result,
\begin{align*}|\nu_{\alpha^{k+m}r} - \nu_{\alpha^k r} | &\ls 2 \sum_{j=0}^{m-1} \left ( \frac{\sigma \alpha^{(k+j)(n-\frac 12)}}{\omega_{n-1}/2 \cdot \alpha^{(k+j+1)(n-1)}}\right)^{1/2}\\
& \ls 2  \left( \frac{2 \sigma}{\omega_{n-1}}\right)^{\frac 12}  \alpha^{k/4 } \frac{1-\alpha^{m/4}}{1-\alpha^{k/4}} \\
&\ls 2  \left( \frac{2 \sigma}{\omega_{n-1}}\right)^{\frac 12} \alpha^{k/4}  \frac{1}{1-\alpha^{1/4}} ,
\end{align*}
which shows that $(\nu_{\alpha^k r})_{k \in \mathbb N}$ is a Cauchy sequence. Let $\nu(x)$ denote its limit.

Since every $|\nu_{\alpha^i r} (x) | = 1$, we have 
$$|\nu(x)| = 1.$$

Then, let us prove that in fact,
$$ \lim_{t \to 0} \nu_t(x) = \nu(x).$$
For every $t$ sufficiently small, there exists $i \in \mathbb N$ such that 
$$ r \alpha^{i+1} \ls t \ls r \alpha^i.$$
Then,
$$|\nu_t(x) - \nu(x) | \ls |\nu_t(x) - \nu_{r \alpha^i}(x) | + | \nu_{r \alpha^i}(x) - \nu(x) |.$$
Using equation \eqref{dnorm}, one can write
\begin{align*}|\nu_t(x) - \nu_{r \alpha^i}(x) | & \ls 2 \left( \frac{\int_{B_{r\alpha^i}(x_\lambda)} |D1_{E_\lambda}| - \int_{B_{r\alpha^i}(x_\lambda)} D1_{E_\lambda}}{\int_{B_t(x_\lambda)} | D1_{E_\lambda}| } \right)^{1/2} \\
& \ls 2 \left( \frac{\int_{B_{r\alpha^i}(x_\lambda)} |D1_{E_\lambda}| - \int_{B_{r\alpha^i}(x_\lambda)} D1_{E_\lambda}}{\int_{B_{r\alpha^{i+1}}(x_\lambda)} | D1_{E_\lambda}| } \right)^{1/2} \\
& \ls 2  \left( \frac{2 \sigma}{\omega_{n-1}}\right)^{\frac 12} \alpha^{i/4} \end{align*}
using Equations \eqref{eqpfmas} and \eqref{eqpfmas2}. This is exactly saying that $x_\lambda \in \partial^\ast E_\lambda$, and so in $\reg E_\lambda$ (see \cite{giusti84}, Th. 4.11).
\end{proof}
\begin{rem}
 Note that the size of $V$ depends only on the choice of $r_0$ and $\eps$, that means on the constant $\sigma$ is Lemma \ref{flatness} (so of the dimension and $|a|$) and of the convergence rate in Lemma \ref{limTV}.
\label{voisindep}
\end{rem}
We can now conclude the proof of Theorem \ref{massaripepe}. It is enough to notice that since the $E_\lambda$ have a constant mean curvature, then $\reg E_\lambda$ is in fact analytic, as well as $\reg C$. So, the local Hausdorff convergence of $E_\lambda \to C$ directly provides the $\mathcal C^2$ convergence of $E_\lambda \cap V$ to $C \cap V$.

\subsection{We can assume that \texorpdfstring{$E$}{E} and \texorpdfstring{$F$}{F} have the same tangent cone}
We are now ready to prove the strict comparison principle for constant mean curvature surfaces $E$ and $F$ (Theorem \ref{simonp}). We proceed by contradiction and assume that there exists $x_0 \in \partial E \cap \partial F$. We prove that we can assume that $E$ and $F$ have the same tangent cone at $x_0$. To do so, we use the dimension reducing argument by Bombieri and Giusti \cite{bombieri72}. Let $C_1 \subset  C_2$ the tangent cones to $E$ and $F$ at $x_0$. Then, there must exist $y \neq 0$ in $ \partial C_1 \cap \partial C_2$. Indeed, if not, we could consider a ball $B_r(x_0)$ and $C_1 \cap B_r(x_0)$ and $C_2 \cap B_r(x_0)$ would not touch near $\partial B_r(x_0)$ and would be both minimizing in $B_r(x_0)$ and contain $x_0$. We could then apply the proof of Theorem \ref{thmjer} with $E_s$ and $E_t$ replaced by $C_1$ and $C_2$ (which do not touch near the boundary of $B_r(x_0)$, which would provide a contradiction.
\\
We then blow up $C_1$ and $C_2$ at $y$ and get two tangent cones $C^1_1$ and $C^1_2$ which both contain the line $l = \mathbb R y$. Hence $D_1 = C^1_1 \cap (y + l^\perp)$ and $D_2 = C^1_2 \cap (y + l^\perp)$ are two $n-1$-dimensional minimizing cones which are either equal or distinct. If they are distinct, we can reproduce the scheme for $D_1$ and $D_2$, obtaining two $(n-2)$-dimensional minimizing cones $C^2_1$ and $C^2_2$. Since there is no singular minimizing cone with dimension smaller than 7, this iteration stops and gives two equal minimizing cones $C^k_1 = C^k_2$.

As a result, if we prove Theorem \ref{simonp} with $C_1 = C_2$, we can apply it to $C^{k-1}_1$ and $C^{k-1}_2$ which have, by definition, the same tangent cone at some point. This gives $C_1^{k-1} = C_2^{k-1}$. By (finite) induction, we will obtain $E = F$.

In what follows, we suppose that $E$ and $F$ have the same tangent cone $C$ at $x_0$. In addition, for simplicity, we take $x_0 = 0.$

\subsection{Proof}
\emph{Note that in what follows, to have the same notations as in \cite{simon87}, we use $T_1 =\partial  E$ and $T_2 = \partial F$. We also assume that $x_0 = 0$. The proof is the same as in \cite{simon87}. Nonetheless, the different blowups have no zero mean curvature anymore and we have to check that their convergence is still $\mathcal C^2$ near regular points of the limit.}
We begin by seeing that \cite[Lemma 1]{simon87} still holds with minimizers of \eqref{minp}.
\begin{lem}
 Let $E$ minimize \eqref{minp}, $x_0=0 \in \partial E$ and $\nu$ denote the unit normal to $E$. We define $\Omega_\theta$ the set of points $x \in \reg T_1$ which satisfy
\begin{enumerate}[i)]
 \item $d(x,\sing E) > \theta |x|$,
\item $$ \sup \left\{ \frac{|\nu(x) - \nu(y)|}{|x-y|} \, \middle \vert \, y \in \reg E, \,  0< |y-x| < \theta |x| \right \} < \frac{1}{\theta |x|}.$$
\end{enumerate}
Then, there exist $\rho_0(x_0,E)>0$ and $\theta_0(x_0;E)>0$ such that 
$$\forall \, 0< \rho \ls \rho_0, \ \forall \, 0< \theta \ls \theta_0, \quad \Omega_\theta \cap \partial B_\rho(x_0) \neq \emptyset.$$
\label{slem1}
\end{lem}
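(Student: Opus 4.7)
The plan is to argue by contradiction via a blowup. Assume the conclusion fails: applying it to $\rho_0=\theta_0=1/k$ yields sequences $\rho_k\to 0$ and $\theta_k\to 0$ such that $\Omega_{\theta_k}\cap\partial B_{\rho_k}(x_0)=\emptyset$. Without loss of generality take $x_0=0$. Consider the rescaled sets $E_k:=\rho_k^{-1}E$: each $E_k$ minimizes $\per(\cdot)+\rho_k a\,|\cdot|$ (with respect to compact perturbations in $\rho_k^{-1}\Omega$), and since $\rho_k a\to 0$, Theorem~\ref{massaripepe} produces, after extracting a subsequence, a minimizing cone $C$ with $E_k\to C$ in Hausdorff distance and with $\mathcal{C}^2$ convergence on compact subsets of $\reg C$. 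Since the minimality parameter tends to $0$, $C$ is in fact a genuine minimal cone.

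Next, I would exploit the regularity of $C$ on $\partial B_1$. Since $C$ is a nontrivial minimal cone with vertex $0$, the regular set $\reg C$ is open and dense in $\partial C$, so it intersects $\partial B_1$. Pick $y_0\in\reg C\cap\partial B_1$ and choose $\tau>0$ and $L>0$ so that $\overline{B_{2\tau}(y_0)}\cap\partial C$ is a smooth $(n-1)$-dimensional submanifold entirely contained in $\reg C$, with $L$-Lipschitz unit normal. By Theorem~\ref{massaripepe}, for $k$ large $\partial E_k\cap B_{2\tau}(y_0)$ is a $\mathcal{C}^2$ graph converging to $\partial C\cap B_{2\tau}(y_0)$ in $\mathcal{C}^2$; so there exists $z_k\in\partial E_k\cap\partial B_1$ with $z_k\to y_0$, one has $\partial E_k\cap B_\tau(z_k)\subset\reg E_k$, and the unit normal $\nu_{E_k}$ on this ball is $2L$-Lipschitz.

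To conclude, I would scale back: the point $x_k:=\rho_k z_k\in\partial E\cap\partial B_{\rho_k}(0)$ satisfies $|x_k|=\rho_k$, belongs to $\reg\partial E$, and $\dist(x_k,\sing\partial E)\ge\tau\rho_k=\tau|x_k|$; moreover, since $\nu_E(\rho_k z)=\nu_{E_k}(z)$, the Lipschitz constant of $\nu_E$ on $B_{\tau\rho_k}(x_k)\cap\reg\partial E$ is at most $2L/\rho_k$. Taking $\theta_0:=\min(\tau,\,1/(3L))$, for $k$ so large that $\theta_k\le\theta_0$ the two defining conditions for $\Omega_{\theta_k}$ at $x_k$ reduce to $\dist(x_k,\sing\partial E)\ge\theta_k|x_k|$ (clear from $\tau\ge\theta_k$) and $2L/\rho_k<1/(\theta_k\rho_k)$, i.e.\ $\theta_k<1/(2L)$ (also clear). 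Hence $x_k\in\Omega_{\theta_k}\cap\partial B_{\rho_k}(0)$, a contradiction.

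The main technical obstacle is that the blowup convergence can be very weak near the possibly singular apex $0$ of the tangent cone, so one cannot directly read off Lipschitz estimates near $0$; this is why I anchor the construction at a regular point $y_0\in\reg C\cap\partial B_1$, where Theorem~\ref{massaripepe} upgrades Hausdorff convergence to $\mathcal{C}^2$ convergence and thereby delivers uniform control on both defining conditions of $\Omega_\theta$. A secondary verification is checking that the limit really is a minimal cone (zero mean curvature): under $y=\rho_k^{-1}x$, volume scales by $\rho_k^{-n}$ and perimeter by $\rho_k^{-(n-1)}$, so $E_k$ inherits the parameter $\rho_k a\to 0$, and the cone produced by Theorem~\ref{massaripepe} is genuinely minimal.
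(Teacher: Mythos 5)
Your argument is correct and follows essentially the same route as the paper: argue by contradiction, blow up $E$ along the radii $\rho_k$, invoke Theorem~\ref{massaripepe} to get Hausdorff convergence to a minimizing cone $C$ with $\mathcal C^2$ convergence near $\reg C$, pick a regular point of $C$ on $\partial B_1$, and transfer the resulting uniform control on the normal and on the distance to the singular set back to $E$ at scale $\rho_k$, which places a point of $\Omega_{\theta_k}$ on $\partial B_{\rho_k}$ and yields the contradiction. Your explicit bookkeeping of the rescaled curvature parameter $\rho_k a\to 0$ and of the constants $\tau$, $L$ only makes precise what the paper leaves implicit.
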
 

\begin{proof}

 The proof is exactly the same as in \cite{simon87}. We reproduce it here and give some extra details. We proceed by contradiction. If the conclusion of the lemma were false, we could find two sequences $\rho_j \to 0$, $\theta_j \to 0$ such that
\begin{equation} \left \{ x \in \reg E \, \middle \vert \, |x| = \rho_j, \; \dist(x,\sing E) > \rho_j \theta_j, \; \sup_{\substack{y \in \reg E \\ |x-y| \ls \rho_j \theta_j }} \left [ \frac{|\nu(x)-\nu(y)|}{|x-y|} \right] < \frac{1}{\rho_j \theta_j} \right\} = \emptyset.\label{prooflem1} \end{equation}
Let $E_j = \rho_j^{-1} E.$ Thanks to Theorem \ref{massaripepe}, there exists a cone $C$, a subsequence (which we still denote by $j$) such that $E_j  \to C$ in the Hausdorff sense, and $\mathcal C^2$ sense on the neighborhoods of points in $\reg C.$ If $y \in \reg C \cap \partial B_1$ (such a point exists because $\H^{n-7}(\sing C) = 0$), there exists $\theta >0$ and a sequence $y_j \to y$ with $y_j \in B_\theta(y) \cap \partial B_1 \cap \reg E_j$ (we can take $y_j$ on the sphere again), and such that $B_\theta(y) \cap \ E_j \subset \reg E_j$ (thanks to Theorem \ref{massaripepe}). In addition, by the $\mathcal C^2$ convergence (and eventually reducing $\theta$ again), one can have
$$\forall x,z \in B_\theta(y) \cap \reg E_j, \quad \frac{|\nu(x) - \nu(z)|}{|x-z|} \ls \frac 1\theta.$$
Going back to $E$, we have 
\begin{equation}\forall x,z \in B_{\rho_j\theta}(\rho_j y) \cap \reg E, \quad \frac{|\nu(x) - \nu(z)|}{|x-z|} \ls \frac 1{\rho_j\theta}.\label{eqrege} \end{equation}
Finally, notice that $\rho_j y_j \in \partial B_{\rho_j} \cap  \reg E$. In addition, $$\dist(\rho_j y_j,\sing E) = \rho_j \dist(y_j,\sing E_j) \gs \rho_j \theta$$ and, using \eqref{eqrege} with $z = \rho_j y_j \in B_{\rho_j \theta}(\rho_j y)$, this contradicts \eqref{prooflem1} for $j$ large enough.

\end{proof}

Let $\rho_0, \theta_0$ and $\Omega_\theta \subset \reg T_1$ as in Lemma \ref{slem1} and define, for all $x \in T_1$, $$h(x) = \dist(x,\spt T_2).$$ Since $T_1$ and $T_2$ have the same tangent cones at $x_0$, one has, for every $\theta \ls \theta_0$,
\begin{equation}\lim_{r \to 0} r^{-1} \sup_{|x| = r, x \in \Omega_\theta} h(x) = 0.\label{mmc}\end{equation}
Indeed, we have in fact
$$ \frac{1}{r} \sup_{|x| = r, x \in T_1} d(x,C) = \sup_{|y|=1, y \in r^{-1} T_1} d( y, C) \to 0$$ because of Hausdorff convergence of $r^{-1} T_1$ to $C$.
As the same holds for $x \in T_2$, that gives
$$ \frac{1}{r} \sup_{|x| = r, x \in T_2} d(x,C) = \sup_{|y|=1, y \in r^{-1} T_2} d( y, C) \to 0$$ 
which implies \eqref{mmc}.\\
We select $\rho_j \to 0$ such that for all $\rho \ls \rho_j$,
$$\rho_j^{-1} \sup_{\substack{x\in \Omega_{\theta_0} \\ |x| = \rho_j}} h(x) \gs \frac 12 \rho^{-1} \sup_{\substack{x\in \Omega_{\theta_0} \\ |x| = \rho}}h(x)$$
we have in particular for $\theta < 1$,
\begin{equation}\sup_{\substack{x\in \Omega_{\theta_0} \\ |x| = \theta \rho_j}} h(x) \ls 2 \theta \sup_{\substack{x\in \Omega_{\theta_0} \\ |x| = \rho_j}} h(x).\label{sim2}\end{equation}

Let $\rho_j \to 0$ and $T_l^{(j)} = \rho_j^{-1} T_l.$ We want to show that $T_l^{(j)}$ are normal graphs over points of $\reg C$. 
\begin{lem}
For every $ l \in {1,2}$, there exist a sequence of $\mathcal C^2$ functions $h_l^{(j)}$ which is defined in a connected domain $U_j$ such that for some $\theta_j \to 0$,
\begin{equation} \left \{ x \in \reg C\, \middle | \, \operatorname{dist}(x, \sing C) > \theta_j |x|, \; \theta_j < |x| < \theta_j^{-1} \right \} \subset U_j \label{sim3}\end{equation}
and such that 
\begin{equation} \lim_{j \to +\infty} |h_l^{(j)}|_{\mathcal C^2}^\ast = 0, \quad \text{with} \quad \vert f \vert_{\mathcal C^2}^\ast := \sup \frac{|f(x)|}{|x|} + |\nabla f(x)|  + |x| |\nabla^2 f(x)|.\label{sim3b}\end{equation}
and that for every $\theta \in (0,1)$ and every $j \gs j(\theta)$, we also have, for $l \in \{1,2\}$,
\begin{equation}
 \left \{ x \in \reg (\rho_j^{-1} T_l) \, \middle | \, \operatorname{dist}(x, \sing C) > \theta |x|, \; \theta < |x| < \theta^{-1} \right \} \subset G_l^{(j)} \subset \reg (\rho_j^{-1} T_l)
\label{sim4}
\end{equation}
where $G_l^{(j)}$ is the graph of $h_l^{(j)}$ (more precisely, $G_l^{(j)} = H_l^{(j)} (U_j)$ where $H_l^{(j)}(x)  = x + h_l^{(j)} \nu(x)$ and $\nu(x)$ is the normal of $\reg(C)$ at $x$).
We also ask that
\begin{equation}
\rho_j^{-1}(\Omega_{2\theta}) \cap \left \{ x \, \middle | \, \theta < |x| < \theta^{-1} \right \} \subset \left \{ x \in \reg (\rho_j^{-1} T_1) \, \middle | \, \dist(x,\sing C) > \theta |x| \right\}.
 \label{sim5}
\end{equation}

\end{lem}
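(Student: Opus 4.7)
The plan is to build the graph functions $h_l^{(j)}$ locally from the $\mathcal{C}^2$ convergence guaranteed by Theorem \ref{massaripepe}, then glue them together using a diagonal argument to obtain the correct sequence $\theta_j \to 0$, and finally translate the tangent-plane graph statements into normal-graph statements over $\reg C$.

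First, I would fix any compact $K \subset \reg C$ and apply Theorem \ref{massaripepe} at each $y \in K$ to conclude that there is a ball $B_{r(y)}(y)$ on which $T_l^{(j)} \cap B_{r(y)}(y)$ is, for $j$ large, a $\mathcal{C}^2$ surface converging to $C \cap B_{r(y)}(y)$ in $\mathcal{C}^2$. By compactness and the standard inverse/implicit function theorem argument, a finite cover of $K$ lets one express $T_l^{(j)}$ as a normal graph $H_l^{(j)}(x) = x + h_l^{(j)}(x)\nu_C(x)$ over an open neighborhood of $K$ in $\reg C$, with $h_l^{(j)} \to 0$ in $\mathcal{C}^2$ on $K$. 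Uniqueness of the normal-graph representation makes these local pieces compatible, so they glue to a single function on the neighborhood.

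Next, since $C$ is a cone, the sets $\Sigma_\theta := \{x \in \reg C \mid \dist(x,\sing C) > \theta|x|,\ \theta < |x| < \theta^{-1}\}$ form a nested family whose closures are compact subsets of $\reg C$, and $\bigcup_{\theta>0}\Sigma_\theta = \reg C$. Applying the previous step to each $\overline{\Sigma_\theta}$ gives a sequence $j(\theta)$ past which the graph construction is valid on a neighborhood of $\Sigma_\theta$ with arbitrarily small $\mathcal{C}^2$ norm. A standard diagonal extraction then produces $\theta_j \to 0$ and $U_j$ containing $\Sigma_{\theta_j}$ (hence \eqref{sim3}) with $|h_l^{(j)}|_{\mathcal{C}^2}^\ast \to 0$. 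For the weighted norm $|\cdot|_{\mathcal{C}^2}^\ast$, I would exploit the scale invariance: the rescaled surface $\rho_j^{-1}T_l$ still satisfies a prescribed-curvature equation (with rescaled curvature $\rho_j a \to 0$), so applying the local graph construction to the further rescaling $|x|^{-1}(\rho_j^{-1}T_l)$ around any point of unit distance converts the ordinary $\mathcal{C}^2$ norms into the $|x|$-weighted ones.

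For \eqref{sim4}, the inclusion in $G_l^{(j)}$ comes because any regular point of $\rho_j^{-1}T_l$ at distance $>\theta|x|$ from $\sing C$ lies in the same local chart used to build $h_l^{(j)}$, so by uniqueness of the normal representation it is a point of the graph; the inclusion $G_l^{(j)} \subset \reg(\rho_j^{-1}T_l)$ is automatic since the graph is $\mathcal{C}^2$. Finally, for \eqref{sim5}, I would combine the definition of $\Omega_{2\theta}$ (distance $>2\theta|x|$ from $\sing E$ together with a Lipschitz bound on $\nu$ of order $1/(2\theta|x|)$) with the Hausdorff convergence $\rho_j^{-1}(\sing E) \to \sing C$ to get $\dist(\rho_j^{-1}x,\sing C) > \theta |\rho_j^{-1}x|$ for $j$ large, which places the rescaled point in the required set.

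The main obstacle is making the weighted norm $|h_l^{(j)}|_{\mathcal{C}^2}^\ast$ small uniformly across all scales $\theta_j < |x| < \theta_j^{-1}$: the naive $\mathcal{C}^2$ convergence from Theorem \ref{massaripepe} is only local, so one must use the cone homogeneity of $C$ together with the fact that the rescaled curvature $\rho_j a \to 0$ to obtain curvature estimates that are uniform in $|x|$ on a compact region of the unit sphere, and then transfer them by scaling to all radii in the annulus.
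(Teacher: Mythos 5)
Your construction of the graphs $h_l^{(j)}$ --- local $\mathcal C^2$ convergence from Theorem \ref{massaripepe} on compact subsets of $\reg C$, gluing by uniqueness of the normal representation, and a diagonal extraction over the exhausting family $\Sigma_\theta$ to produce $\theta_j \to 0$ and the weighted estimate \eqref{sim3b} --- is essentially the argument in the paper (which runs the diagonal by iterating on annular compacta $K_1 = \{\dist(x,\sing C)\gs \theta_1|x|,\ \theta_1 \ls |x| \ls \theta_1^{-1}\}$, where the weighted and unweighted $\mathcal C^2$ norms are comparable), and \eqref{sim3}, \eqref{sim3b}, \eqref{sim4} are fine along these lines.

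The genuine gap is in your proof of \eqref{sim5}. You invoke ``the Hausdorff convergence $\rho_j^{-1}(\sing E) \to \sing C$'' to conclude that a point of $\rho_j^{-1}\Omega_{2\theta}$ must be at distance $> \theta|x|$ from $\sing C$. But no such convergence of singular sets is available. What the regularity theory (Theorem \ref{massaripepe}) gives is only the upper semicontinuity: rescaled singular points can only accumulate on $\sing C$. Your argument needs the reverse inclusion --- that every singular point of the tangent cone $C$ lying within $\theta|x|$ of $x$ is approximated by singular points of $\rho_j^{-1}E$, so that the condition $\dist(x,\sing E) > 2\theta|x|$ in the definition of $\Omega_{2\theta}$ is violated. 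This lower semicontinuity of singular sets under blow-up is not proved anywhere (and is false for general sequences of minimizers: smooth minimizing hypersurfaces can converge to a singular cone, so a singular point of the limit need not be a limit of singular points). The paper avoids this entirely: it argues by contradiction using only the \emph{second} defining property of $\Omega_{2\theta}$, the Lipschitz bound $|\nu(x)-\nu(y)|/|x-y| < 1/(2\theta|x|)$ on the normal. If a point $x_k \in \rho_{j_k}^{-1}\Omega_{2\theta}$ were within $\theta|x_k|$ of some $\bar z \in \sing C$, then, since $C$ is not a graph near $\bar z$, one finds regular points of $C$ near $\bar z$ with nearly opposite normals; the local $\mathcal C^2$ convergence of $\rho_{j_k}^{-1}T_1$ to $C$ near $\reg C$ transfers these to points $\alpha_k,\tilde\alpha_k \in \reg(\rho_{j_k}^{-1}T_1)$ with $|\nu(\alpha_k)-\nu(\tilde\alpha_k)| \gs 3/2$, while the Lipschitz bound at $x_k$ forces $|\nu(\alpha_k)-\nu(\tilde\alpha_k)| \ls 11/10$, a contradiction. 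You would need to replace your Hausdorff-convergence step by an argument of this kind (or otherwise justify the approximation of $\sing C$ by rescaled singular points, which you cannot in general).
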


\begin{proof}
Let $T_l^{(j)} := \rho_j^{-1} T_l$. We construct $\theta_j$ as follows. Let $\theta_1$ be any real in $(0,1)$ and for $l \in \{1,2\}$,
we consider
$$K_1 := \{x \in \reg C \, \vert \, \dist(x,\sing C) \gs \theta_1 |x|, \; \theta_1 \ls |x| \ls \theta_1^{-1} \}.$$
It is a compact subset of $\reg C$. Thanks to Theorem \ref{massaripepe}, there exists $h_1$ such that if $y \in T_l^{(j)}$ satisfies $|y-x| < h_1$ for some $x \in K_1$, then $y \in \reg T_l^{(j)}$.

Using the Hausdorff convergence of $T_l^{(j)}$ to $C$ on the compact set
$$L_1 = \{x \in \Rn \, \vert \, , \dist(x,\sing C) \gs \theta_1 |x|, \; \theta_1 \ls |x| \ls \theta_1^{-1}  \},$$
there exists $j_2$ such that for every $j\gs j_2$ and $y \in L_1 \cap T_l^{(j)}$, there exists $x \in K_1$ with $|x-y| \ls h_0/2.$ That implies that 
$$ L_1 \cap T_l^{(j)} \subset \reg T_l^{(j)}.$$
We can increase $j_2$ again such that $L_1 \cap T_l^{(j)}$ is in fact a graph of $h_l^{(j)}$ over $K_1$ with 
$$\Vert h_l^{(j)} \Vert_{\mathcal C^2} \ls \frac{1}{j}.$$
This is possible since the $L^\infty$ convergence of the $h_l^{(j)}$ is provided by the Hausdorff convergence of $T_l^{(j)}$ to $C$ and the $\mathcal C^2$ is obtained using the analyticity of $\reg T_l^{(j)}$ as well as $\reg C$.
We let $\theta_{j_2} = \frac{\theta_1}{2}$ and for every $j \in [1,j_2-1]$, $\theta_j = \theta_1.$ To define $j_3$, we use the same scheme with $\theta_{j_2}$ in place of $\theta_1$: that enables to define $\theta_k$ for $k \ls j_3$. Then, $\theta_j \to 0$.

We proved \eqref{sim3b} and \eqref{sim4} by construction.

We now prove \eqref{sim5}. If it does not hold, then there exists $\theta$ and $j_k \to \infty$ such that there exists 
$$x_k \in \rho_{j_k}^{-1}(\Omega_{2\theta}) \cap \left \{ x \, \middle | \, \theta < |x| < \theta^{-1} \right \}$$ 
and
$$\dist(x_k,\sing C) \ls \theta |x_k|.$$
The last equation means that there is $z_k \in \sing C$ such that $|x_k - z_k| \ls \theta |x_k|.$ One can assume that $z_k \to \overline z \in \sing C$ using the local compactness of $\sing C$. Finally, $|x_k - \overline z| \ls \theta |x_k| + \eps_k$ with $\eps_k \to 0$.

The point $\overline z$ is singular, which implies in particular that $C$ cannot be a graph around it. As a result, we have a unit vector $\nu$ and two sequences $z^i,\tilde z^i \in \reg C$ which converge to $\overline z$ and whose normals $\nu(z^i)$ and $\nu(\tilde z^i)$ converge respectively to $\nu$ and $-\nu$. Since $T^j$ converge $\mathcal C^2$ to $C$ in the neighborhood of $\reg C$, there exist (using a diagonal argument) $\alpha_k ,\tilde \alpha_{k} \in \reg T_1^{(j_k)}$ such that 
$$|\overline z - \alpha_{k}| \ls \frac{\theta^2}{10}$$ and the normals $\nu(\alpha_{k})$ and $ \nu(\tilde \alpha_{k})$ to $\reg T_1^{j_k}$ satisfy, for $k$ large enough,
\begin{equation}|\nu(\alpha_{k}) - \nu(\tilde \alpha_{k}) | \gs \frac{3}{2}.\label{eqpasreg}\end{equation}
On the other hand, since $x_k \in \rho_{j_k}^{-1}\Omega_{2\theta}$, we have 
$$ \sup \left\{ \frac{|\nu(x_k) - \nu(y)|}{|x_k-y|} \, \middle \vert \, y \in \reg T_1^{j_k}, \,  0< |y-x_k| < 2\theta |x_k| \right \} < \frac{1}{2\theta |x_k|}.$$
Noting that we can choose $y = \alpha_{k}$ and $y= \tilde \alpha_{k}$ in the last identity, it provides
$$\frac{|\nu(x_k) - \nu(\alpha_{k})|}{|x_k-\alpha_{k}|} \ls \frac{1}{2\theta |x_k|}$$
which implies, since $|x_k - \alpha_{k}| \ls 11 \frac{\theta |x_k|}{10}$,
$$|\nu(x_k) - \nu(\alpha_{k})| \ls \frac{11}{20}.$$
The same holds for $\tilde \alpha_{k}$. Summing, we get
$$|\nu(\alpha_{k}) - \nu(\tilde \alpha_{k})| \ls \frac{11}{10}.$$
This contradicts \eqref{eqpasreg}, proving \eqref{sim5}.
\end{proof}

Using this lemma, we have maps $$ p_j : \rho_j^{-1}\Omega_{2\theta}\cap \left \{ x \, \middle | \, \theta < |x| < \theta^{-1} \right \} \to U_j $$
with
$$H_1^{(j)} (p_j(x)) = p_j(x) + h_1^{(j)}(p_j(x)) \nu(p_j(x)) = x$$
and $ \forall \left \{ x \, \middle | \, \theta < |x| < \theta^{-1} \right \}$ and $j$ sufficiently large, 
\begin{equation}\quad \frac{1}{2} u_j(p_j(x))  \ls \rho_j^{-1} \dist(\rho_j x,T_2) \ls 2 u_j(p_j(x)) \label{distgraph} \end{equation}
where $u_j = h_1^{(j)} - h_2^{(j)}$. The last inequality is provided by the convergence of $\nu_l^{(j)}(x_j)$ to $\nu(x)$ for $x_j \to x$ (and obvious notation). We notice that since $\reg T_1 \cap \reg T_2 = \emptyset$ (Thanks to the strong maximum principle for regular surfaces), one can assume that $u_j > 0.$ Equation \eqref{sim2}, after dilation with a factor $\rho_j^{(-1)}$, gives
$$ \sup_{\substack{x \in \rho_j^{-1} \Omega_{\theta_0} \\ |x| = \theta}} \rho_j^{-1}h(\rho_j x) \ls 2 \theta  \sup_{\substack{x \in \rho_j^{-1} \Omega_{\theta_0} \\ |x| = 1}} \rho_j^{-1}h(\rho_j x)$$
Using then \eqref{distgraph}, we obtain
\begin{equation} \label{contrad2} \sup_{\substack{x \in \rho_j^{-1} \Omega_{\theta_0} \\ |x| = \theta}} u_j(p_j(x)) \ls 4 \theta \sup_{\substack{x \in \rho_j^{-1} \Omega_{\theta_0} \\ |x| = 1}} u_j(p_j(x)).\end{equation}

Since $\reg T_1$ and $\reg T_2$ are two constant mean curvature submanifolds, we can prove the
\begin{lem}
 The difference $u_j := h_1^{(j)} - h_2 ^{(j)}$ satisfies an equation of the form
\begin{equation}\Delta_C u_j + |A_C|^2 u_j = \div(a_j\cdot \nabla u_j) + b_j \cdot \nabla u_j + c_j u_j\label{eqjacobi}\end{equation}
where $\Delta_C$ is the Laplace-Beltrami operator on $C$, $A_C$ the second fundamental form of $C$ and $a_j,b_j,c_j$ three functions converging uniformly to zero on compact subsets of $\reg C$.
\label{jacobi}
\end{lem}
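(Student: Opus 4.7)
The plan is to derive \eqref{eqjacobi} by writing the prescribed mean curvature equation for each of the two normal graphs, subtracting the two equations, and Taylor-expanding at $h = 0$. Since $E$ and $F$ are both minimizers of \eqref{minp} with the same parameter $a$, the regular parts of $T_1$ and $T_2$ have constant variational mean curvature equal to $a$, so after the rescaling by $\rho_j^{-1}$ the hypersurfaces $\reg T_l^{(j)}$ have constant mean curvature $\rho_j a$. In particular, letting $j \to +\infty$, the tangent cone $C$ is a minimal cone, which is the structural fact that makes the Jacobi operator appear in the linearisation.

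On a compact subset of $\reg C$, the normal parametrisation $H_l^{(j)}(x) = x + h_l^{(j)}(x) \nu(x)$ is a smooth immersion for $j$ large, by the $\mathcal C^2$ convergence $h_l^{(j)} \to 0$ obtained in the preceding lemma. First I would express the mean curvature of $\reg T_l^{(j)}$ at $H_l^{(j)}(x)$ as a second order quasilinear operator $\mathcal M(h_l^{(j)})$ in $h_l^{(j)}$, $\nabla h_l^{(j)}$ and $\nabla^2 h_l^{(j)}$, with coefficients depending smoothly on the geometry of $C$ near $x$. Since $C$ is minimal, $\mathcal M(0) = 0$, and the classical second variation formula for the area functional yields the Taylor expansion
$$\mathcal M(h) = \Delta_C h + |A_C|^2 h + \mathcal R(x, h, \nabla h, \nabla^2 h),$$
where $\Delta_C + |A_C|^2$ is the Jacobi operator of $C$ and the remainder $\mathcal R$ vanishes together with its Fréchet derivative at the origin.

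Next, using $\mathcal M(h_1^{(j)}) = \mathcal M(h_2^{(j)}) = \rho_j a$ and the fundamental theorem of calculus along the segment $t \mapsto h_2^{(j)} + t u_j$, I would obtain
$$0 = \Delta_C u_j + |A_C|^2 u_j + \int_0^1 D \mathcal R\bigl(x, h_2^{(j)} + t u_j, \ldots\bigr) \, dt \cdot (u_j, \nabla u_j, \nabla^2 u_j).$$
The last term is a linear second order operator acting on $u_j$ whose coefficients are smooth functions of the $h_l^{(j)}$ and their first two derivatives. Because $D \mathcal R$ vanishes at the origin and $|h_l^{(j)}|_{\mathcal C^2}^\ast \to 0$, these coefficients tend to zero uniformly on compact subsets of $\reg C$.

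To put the resulting equation in the divergence form of \eqref{eqjacobi}, I would exploit the variational nature of $\mathcal M$: since $\mathcal M$ is the first variation of the area functional restricted to normal graphs, the second order part of $\mathcal R$ carries a natural divergence structure and rewrites as $-\div(a_j \nabla u_j)$ for a symmetric matrix $a_j$ depending on $(h_l^{(j)}, \nabla h_l^{(j)})$, while the lower order terms provide $b_j \cdot \nabla u_j + c_j u_j$. The convergence $a_j, b_j, c_j \to 0$ locally uniformly on $\reg C$ is then inherited from the convergence of the $h_l^{(j)}$. I expect the main technical difficulty to be the explicit and clean expansion of the mean curvature operator in normal coordinates over the smooth submanifold $\reg C$; this is routine but tedious differential geometry, and relies on the classical fact, which I would take for granted, that the linearisation of the area functional at a minimal submanifold is precisely the Jacobi operator $\Delta_C + |A_C|^2$.
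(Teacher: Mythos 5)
Your outline is, in substance, the paper's own argument: write the two rescaled boundaries as normal graphs $h_l^{(j)}$ over $\reg C$, use that both have the same constant mean curvature $\rho_j a$ so that the constants cancel upon subtraction, linearize the normal-graph mean curvature operator at $0$ (where, $C$ being minimal, the linearization is the Jacobi operator $\Delta_C+|A_C|^2$), and get the smallness of the remaining coefficients from the weighted $\mathcal C^2$ convergence \eqref{sim3b}. The only real difference is that the paper carries out the linearization by hand (metric, normal, second fundamental form and mean curvature of the normal graph in normal coordinates on $C$), which is exactly the part you defer as ``routine but tedious''.

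The step you assert rather than prove is the passage to the divergence form in \eqref{eqjacobi}, and that is precisely where the content of the lemma sits. First, ``the variational nature of $\mathcal M$'' is not quite the right invocation: the operator whose values you cancel is the mean curvature itself, and the mean curvature of a normal graph is not an Euler--Lagrange operator in divergence form; the Euler--Lagrange operator of the area-plus-volume functional is the mean curvature multiplied by a Jacobian-type weight, and if you switch to that operator the right-hand sides no longer cancel exactly and their difference has to be absorbed into $b_j,c_j$ using $\rho_j a\to 0$. Either route works, but something must then be checked: the coefficient of $\nabla^2 u_j$ produced by your fundamental-theorem-of-calculus step depends only on $(x,h_l^{(j)},\nabla h_l^{(j)})$ (the mean curvature operator is affine in $\nabla^2 h$) and vanishes at $(h,\nabla h)=(0,0)$; only with this structure can you move it under a divergence at the cost of a term $(\div a_j)\cdot\nabla u_j$ whose coefficient involves at most second derivatives of the $h_l^{(j)}$, which \eqref{sim3b} does control on compact subsets of $\reg C$, and not third derivatives, which it does not. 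This bookkeeping --- the paper's repeated remark that the $o(f^2)$ terms contain no second derivatives of $f$ --- is exactly what the explicit computation is there to verify, and it is what yields the concrete coefficients (for instance $a_j=(2h_1^{(j)}+h_2^{(j)})B$ with $B=2A_C$). So your plan is sound and coincides with the paper's strategy, but the lemma is not proved until this expansion is actually carried out, or replaced by the same FTC argument run at the level of the divergence-form Euler--Lagrange equation with the weight handled as above.
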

\begin{proof}
Let $f$ be a function on $\reg C$ and consider $M$ the normal graph of $f$ over $\reg C$ (we note only $C$ in the rest of the proof). 
A parametrization of $M$ is (locally) $F : \Omega \to \R^n$ with
$$F(x) = C(x) + f(C(x)) \nu(C(x)) $$
where $C(x)$ is a local parametrization of $C$. More precisely, the metric on $C$ is written
$$g_{ij} = \scal{\partial_i C}{\partial_j C}.$$
As a result, a tangent vector is written (the $h_{\alpha,\beta}$ are the coefficients of $A_C$)

$$\tau_i = \partial_i F = \partial_i (C + f \nu) = \partial_i C + \partial_i f \nu + f \partial_i \nu = \partial_i C + \partial_i f \nu + f h_{il}g^{lm} \partial_m C.$$

Thus the metric on $M$ is
\begin{align}
 \tilde g_{ij} & = \scal{\partial_i C + \partial_i f \nu + f h_{il}g^{lm} \partial_m C}{ \partial_j C + \partial_j f \nu + f h_{js}g^{st} \partial_t C} \\
&= g_{ij} + \partial_i f \partial_j f + f h_{il} g^{lm} g_{mj} + f h_{js} g^{st} g_{it} + f^2 h_{il}g^{lm}h_{js}g^{st} g_{mt} \\
&=g_{ij} + f \left( h_{il} g^{lj} + h_{jl} g^{li} \right) + \partial_i f \partial_j f + f^2 g^{lm} h_{il} h_{jm}.
\end{align}
Note that this metric does not contain any derivatives of order two for $f$. Using normal coordinates on $C$, it can be rewritten as
$$\tilde g_{ij} = \delta_{ij}(1 + 2f \lambda_i + f^2 \lambda_i^2) + \partial_i f \partial_j f.$$

The normal to $M$ can be computed in the basis $(\partial_i C, \nu)$ as 
$$\tilde \nu = \alpha \nu + \sum_i \beta_i \partial_i C.$$
The coefficients $\alpha$ and $\beta_i$ satisfy

\begin{equation}
\begin{aligned}
 0&=\scal{\tilde \nu}{\partial_i F} = \scal{\tilde \nu}{\partial_i C + \partial_i f \nu + f h_{il}g^{lm} \partial_m C} \\
&=\beta_j g_{ij} + \alpha \partial_i f + f h_{il} g^{lm} \beta_j g_{jm} \\
&= \beta_j g_{ij} + \alpha \partial_i f + f h_{il} \beta_l.
\end{aligned}
\label{eqalphabeta1}
\end{equation}

and 
\begin{equation}\scal{\tilde \nu}{\tilde \nu} = \alpha^2 + \beta_i \beta_j g_{ij} = 1.
\label{eqalphabeta2}
\end{equation}

So, the coefficients $\alpha$ and $\beta$ depends only on order zero and one derivatives of $f$.

One also have $\tilde h_{ij} = - \scal{F_{ij}}{\tilde \nu}.$ Let us compute 
\begin{align*}
 \partial_j(F_i)&= \partial_j \left( \partial_i C + \partial_i f \nu + f h_{il}g^{lm} \partial_m C \right) \\
&= \partial_{ij} C + \partial_{ij} f \nu + \partial_i f \partial_j \nu + \partial_j f h_{il}g^{lm} \partial_m C + f \partial_j(h_{jl}) g^{lm} \partial_m C \\ 
& \quad +f h_{il} \partial_j(g^{lm}) \partial_m C + f h_{il}g^{lm} \partial_{jm} C.
\end{align*}
Hence 
\begin{align*}
  \scal{F_{ij}}{\tilde \nu} &= \scal{\partial_{ij} C}{\alpha \nu + \beta_k \partial_k C} + \alpha \partial_{ij} f+ \partial_i f \scal{\partial_j \nu}{\beta_k \partial_k C} \\
  &  + \partial_j f h_{il} g^{lm} \beta_k \scal{\partial_k C}{\partial_m C} + f \partial_j(h_{il}) g^{lm} \beta_k \scal{\partial_k C}{\partial_m C} \\
  &+ f h_{il} \partial_j (g^{lm}) \beta_k \scal{\partial_k C}{\partial_m C} + f h_{il}g^{lm} \scal{\partial_{jm} C}{\alpha n + \beta_k C_k}.
\end{align*}

In normal coordinates on $C$ (the second fundamental form is written $h_{ij} = \lambda_i \delta_{ij}$), that can be rewritten as
\begin{align*}
 \tilde h_{ij} &=\alpha h_{ij} - \alpha \partial_{ij} f -\partial_i f \lambda_j \beta_j - \partial_j f \lambda_i \beta_i - f \partial_j(h_{il})\beta_l + \alpha f h_{ij}.
\end{align*}
To compute the mean curvature, we need the inverse of the metric. We compute using normal coordinates in $C$.

\begin{align*}
 \tilde g^{ij} &= \delta_ij \left(1 - 2 f\lambda_i -f^2 \lambda_i^2 \right) - \partial_i f \partial_j f + 4 f^2 \delta_{ij} \lambda_i^2 +o(f^2) \\
& = \delta_{ij} (1-2f \lambda_i + 3 f^2 \lambda_i^2) - \partial_i f \partial_j f + o(f^2).
\end{align*}
Note that no term in this metric (even in $o(f^2)$) involves second derivative of $f$. We have to estimate $\alpha$ and $\beta_i$. In normal coordinates, we have, using \eqref{eqalphabeta1} and \eqref{eqalphabeta2},
$$\beta_i +\alpha \partial_i f + f \lambda_i \beta_i =0,$$
which yields
$$\beta_i = -\frac{\alpha \partial_i f}{1+f\lambda_i} = - \alpha \partial_i f (1-f\lambda_i+ o(f)) = -\alpha \partial_i f + \alpha f \partial_i f \lambda_i + o(f^2).$$
On the other hand, $\alpha^2 + \sum \beta_i^2 = 1$, which means
$$\alpha^2 + \sum_i (-\alpha \partial_i f + f\partial_i f \lambda_i + o(f^2))^2 = 1,$$
or
$$\alpha^2 \left( 1+ \sum_i (\partial_i f)^2 (1+ f\lambda_i o(f^2))^2 \right) = \alpha^2 (1+|\nabla f|^2) + o(f^2) =1.$$
Finally,
$$\alpha = \sqrt{\frac{1}{1+ |\nabla f|^2}} = 1-\frac 12 |\nabla f|^2 + o(f^2)$$
and
$$\beta_i = - \partial_i f +f \partial_i f \lambda_i +o(f^2)$$
where there is no second derivative of $f$ in $o(f^2)$.

The mean curvature can now be computed using normal coordinates on $C$ (once again, no second derivative in $o(f^2)$).
\begin{align*}
\tilde H &= \tilde g^{ij} \tilde h_{ij} = (\delta_{ij} (1-2f \lambda_i + 3 f^2 \lambda_i^2) - \partial_i f \partial_j f ) \cdot \\
& \cdot (\alpha h_{ij} - \alpha \partial_{ij} f -\partial_i f \lambda_j \beta_j - \partial_j f \lambda_i \beta_i - f \partial_j(h_{il})\beta_l + \alpha f h_{ij}) \\
& =  \alpha \lambda_i (1-2f \lambda_i-3f^2 \lambda_i^2) + \alpha( -\partial_{ii} f +2f \lambda_i \partial_{ii} f) - 2\partial_i f \lambda_i \beta_i -f \partial_i(h_{il}) \beta_l \\
& \quad + \alpha f \lambda_i  - 2f^2 \alpha \lambda_i^2 - \alpha \lambda_i \partial_i f^2 +o(f^2) \\
 & =  \lambda_i (1-2f \lambda_i-3f^2 \lambda_i^2) - \frac 12 |\nabla f|^2 \lambda_i -\partial_{ii} f +2f \lambda_i \partial_{ii} f \\
& \quad   + 2 \lambda_i (\partial_i f)^2 + f \partial_i(h_{il}) \partial_l f + f \lambda_i -2f^2 \lambda_i^2 - \lambda_i (\partial_i f)^2 + o(f^2) \\
&= \underbrace{(1+f)H - \frac 12 |\nabla f|^2 H}_{=0} - \Delta f -2f |A|^2  + \div(a \nabla f) + b \cdot \nabla f +c f 
\end{align*}
with
$$a_{ij} = 2f h_{ij} \quad \text{and} \quad b_i =- \lambda_i \partial_i f - f \partial_k(h_{ki})+o(f) \quad \text{and} \quad c = -2f |A|^2 + o(f).$$
So, $h_l^{(j)}$ both satisfy (we denote by $H_j$ the (constant) mean curvature of $T_l^{(j)}$)
$$ H_j +  \Delta h_l^{(j)} +2h_l^{(j)} |A|^2 =    \div(a \nabla f) + b \cdot \nabla f +c f.$$

Substracting the two equations (and denoting by $B$ the quantity $2A$) and noting that since the two terms $o(h_1^{(j)})$ and $o(h_2^{(j)})$ are regular and obtained by the same procedure, one has $o(h_1^{(j)}) - o(h_2^{(j)}) = o(u_j)$, we get
\begin{align*}  \Delta u_j + 2 u_j |A|^2 & = \div\left( h_1^{(j)} B \nabla (h_1^{(j)}) - h_2^{(j)}  B \nabla (h_2^{(j)}) \right) \\ & - \sum_i \lambda_i \left[ (\partial_i h_1^{(j)})^2 - (\partial_i h_2^{(j)})^2 \right] -2 u_j(h_1^{(j)} + h_2^{(j)}) |A|^2 + o(u_j) 
\\
& = \div\left(  (h_1^{(j)} + h_2^{(j)}) B \nabla u_j \right) + \div\left( h_1^{(j)} B \nabla h_2^{(j)} -h_2^{(j)} B \nabla h_1^{(j)} \right) \\
& - \sum_{i} \lambda_i \partial_i(h_1^{(j)} - h_2^{(j)}) \partial_i(h_1^{(j)} + h_2^{(j)}) -2 u_j(h_1^{(j)} + h_2^{(j)}) |A|^2 + o(u_j).
\end{align*}
Then, we write
$$ \div\left( h_1^{(j)} B \nabla h_2^{(j)} -h_2^{(j)} B \nabla h_1^{(j)} \right) = h_1^{(j)} (\div (B \nabla u_j)) + (h_1^{(j)} - h_2^{(j)}) \div(B\nabla h_1^j)$$
and 
$$h_1^{(j)} (\div (B \nabla u_j)) = \div(h_1^{(j)} B \nabla u_j) - \nabla h_1^{(j)}\cdot  B \nabla u_j$$
to get
\begin{align*}\Delta u_j + 2 u_j |A|^2 & = \div\left( ( 2h_1^{(j)} + h_2^{(j)}) B \nabla u_j \right)  \\
& + \left( B \nabla h_1^{(j)} - A\nabla (h_1^{(j)} + h_2^{(j)}) \right) \cdot \nabla u_j \\
& +\left(  -2 u_j(h_1^{(j)} + h_2^{(j)}) |A|^2 + \div(B\nabla h_1^{(j)}) \right) u_j + o(u_j).
\end{align*}
Then, it remains to see that with
$$a_j:= (2h_1^{(j)} + h_2^{(j)}) B $$
$$ b_j :=  B \nabla h_1^{(j)} - A\nabla (h_1^{(j)} + h_2^{(j)})$$
and 
$$c_j := -2 u_j(h_1^{(j)} + h_2^{(j)}) |A|^2 + \div(B\nabla h_1^{(j)}) +\varepsilon_j$$
where $o(u_j) = \varepsilon_j u_j$, we have $a_j,b_j,c_j \to 0$ on compact subsets of $\reg C$ and satisfy \eqref{eqjacobi}.
\end{proof}
The rest of the proof is similar to \cite{simon87}. Nonetheless, we reproduce it for convenience (and give extra details).

Since $u_j >0$, one can use Harnack inequality in \eqref{eqjacobi} on a compact $K \subset \reg C$. It yields 
\begin{equation} \label{sim8} \sup_{K} u_j \ls c_K \inf_K u_j. \end{equation}
Then, Schauder theory (\cite[Th. 8.32]{gilbarg01}) implies that for $j$ large enough,
$$|u_j|_{\mathcal C^{1,\alpha}(K)} \ls c_K \inf_K u_j.$$
Now, let us fix $y_0 \in \reg C$. Then, the sequence $\alpha_j :=(u_j(y_0)^{-1}) u_j$ converges, up to a subsequence, in $\mathcal C^1_{loc}(\reg C)$ to some function $u$. Since $\alpha_j(y_0) = 1$ for all $j$, $|\alpha_j|_{\mathcal C^{1,\alpha}(K)}$ is bounded away from zero, and so is $\inf_K u_j.$ As a result, $u >0$ on $\reg C$ (and $u(y_0)=1$). On the other hand, $u$ is a solution of
$$\Delta_C u +|A_C|^2 u = 0.$$
In particular, $\Delta_C u \ls 0$ on $\reg C$.

The last part of the proof consists in applying Bombieri and Giusti Harnack inequality \cite[Th. 6]{bombieri72} for functions on a minimal cone to $u$ on $\reg C$. 
\begin{lem}
 There exists a sequence $\varphi_j \in \mathcal C^\infty_c(\reg C)$ such that
\begin{itemize}
\item For every $x \in \Omega$, $0 \ls \varphi_j(x) \ls 1$
 \item For every $x \in \reg C$ such that $\frac{1}{j} \ls |x| \ls j$ and $\dist(x,\sing C) > \frac{1}{j}$, we have $\varphi_j(x) = 1$,
\item For a fixed $R >0$, one has
\begin{equation}\int_{\reg C \cap B_R(0)} |\nabla \varphi_j|^2 \to 0.\label{l2phii} \end{equation}
\end{itemize}

\end{lem}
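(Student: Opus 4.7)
The plan is to produce $\varphi_j$ as a smoothed product $\eta_j\cdot \chi_j$, where $\eta_j$ excises a neighbourhood of $\sing C\cap \overline{B_{2R}}$ and $\chi_j$ is a radial cutoff killing the regions $\{|x|\le 1/(2j)\}$ and $\{|x|\ge 2j\}$. The decisive input is Federer's regularity theorem for area-minimizing cones, which yields $\dim_{\mathcal H}\sing C\le n-8$ and in particular $\H^{n-3}(\sing C\cap \overline{B_{2R}})=0$. This very high codimension in $\reg C$ (itself of dimension $n-1$) makes the singular set capacitary negligible, so a tube around $\sing C$ can be excised at the cost of arbitrarily small Dirichlet energy.

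For the first factor, fix a sequence $\delta_j\downarrow 0$, and for each $j$ use the vanishing of $\H^{n-3}(\sing C\cap\overline{B_{2R}})$ together with compactness to cover $\sing C\cap\overline{B_{2R}}$ by finitely many balls $B(y_i,\rho_i)$ with $\rho_i\le 1/(2j)$ and $\sum_i \rho_i^{n-3}<\delta_j$. Let $\eta_j:\Rn\to[0,1]$ be the piecewise linear function that equals $0$ on $\bigcup_i B(y_i,\rho_i)$, equals $1$ outside $\bigcup_i B(y_i,2\rho_i)$, and satisfies $|\nabla\eta_j|\le C/\rho_i$ in the $i$-th annulus. Combining this with the standard monotonicity bound $\H^{n-1}(\reg C\cap B(y,\rho))\le C\rho^{n-1}$ for minimizing cones gives
$$\int_{\reg C\cap B_{2R}}|\nabla\eta_j|^2\,d\H^{n-1} \;\le\; C\sum_i \rho_i^{-2}\rho_i^{n-1} \;=\; C\sum_i \rho_i^{n-3} \;<\; C\delta_j \;\longrightarrow\; 0.$$

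For the second factor take a smooth $\chi_j:[0,\infty)\to[0,1]$ equal to $1$ on $[1/j,j]$, vanishing off $[1/(2j),2j]$, with $|\chi_j'(r)|\le Cj$ for $r\sim 1/j$ and $|\chi_j'(r)|\le C/j$ for $r\sim j$. The monotonicity estimate then yields
$$\int_{\reg C\cap B_R}|\nabla(\chi_j(|\cdot|))|^2\,d\H^{n-1} \;\le\; C\left( j^2\cdot j^{-(n-1)} + j^{-2}\cdot j^{n-1}\right),$$
which tends to zero when $n\ge 4$; for $n\le 3$ we have $\sing C=\emptyset$ and $C$ is an affine hyperplane, in which case the construction of such cutoffs is elementary. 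Set $\varphi_j := \eta_j\cdot \chi_j(|\cdot|)$ and regularize by convolution against a mollifier at scale much smaller than $\tfrac14\min_i \rho_i$, so that $\varphi_j\in \mathcal C^\infty_c(\reg C)$ while all quantitative estimates are preserved. The equality $\varphi_j\equiv 1$ on $\{1/j\le|x|\le j,\ \dist(x,\sing C)>1/j\}$ follows from $\rho_i\le 1/(2j)$, and the energy bound from Leibniz: $|\nabla(\eta_j\chi_j)|^2\le 2(|\nabla\eta_j|^2+|\nabla\chi_j|^2)$.

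The main obstacle lies in the $L^2$-smallness of $|\nabla \varphi_j|$, and it is handled precisely by the estimate $\dim\sing C\le n-8$: a set of positive $(n-3)$-capacity on the $(n-1)$-dimensional manifold $\reg C$ could not be excised by smooth cutoffs of vanishing Dirichlet energy, so some such codimension control is unavoidable, and Federer's bound provides far more than is needed. Everything else reduces to standard covering and mollification.
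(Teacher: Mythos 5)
Your construction follows the same route as the paper's proof (cover the singular set by balls with small radii, excise them with a Lipschitz cutoff, multiply by a radial truncation for compact support, then smooth), and your energy estimate is in fact the more careful one: you correctly integrate over $\reg C$ against $\H^{n-1}$ using the density bound $\H^{n-1}(\reg C\cap B_\rho)\ls C\rho^{n-1}$, so the relevant smallness is $\sum_i\rho_i^{n-3}$, guaranteed by $\dim\sing C\ls n-8$. Two points in your write-up, however, do not work as stated. First, you only excise $\sing C\cap\overline{B_{2R}}$, while your function is supported in the much larger ball $\overline{B_{2j}}$; on the annulus between $B_{2R}$ and $B_{2j}$ its support can meet $\sing C$, so the resulting $\varphi_j$ is not in $\mathcal C^\infty_c(\reg C)$. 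The fix is immediate and costless: cover $\sing C\cap\overline{B_{2j}}$ (all of the singular set inside the support of the radial cutoff) by balls of radius $\ls 1/(4j)$ with $\sum_i\rho_i^{n-3}$ small, which is possible since $\H^{n-3}(\sing C\cap\overline{B_{2j}})=0$; only the balls meeting $B_R$ enter the energy estimate \eqref{l2phii}, so nothing changes there. (Taking $\rho_i\ls 1/(4j)$ rather than $1/(2j)$ also leaves a definite gap so that mollification does not spoil $\varphi_j\equiv 1$ on the set required in the second bullet.)

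Second, your bound for the radial cutoff is arithmetically wrong: $j^{-2}\cdot j^{n-1}=j^{n-3}$ does not tend to zero for $n\gs 4$. The term is harmless for a different, trivial reason: the integral in \eqref{l2phii} is over the fixed ball $B_R(0)$, and once $j>R$ the outer transition annulus $\{j\ls|x|\ls 2j\}$ is disjoint from $B_R$, so it contributes nothing; this is exactly how the paper handles its outer cutoff. Likewise the inner radial cutoff near the origin is superfluous (and its contribution $j^{3-n}$ is only bounded, not vanishing, when $n=3$): either $0\in\sing C$ and the excision already kills a neighborhood of the vertex, or $C$ is a hyperplane and no cutoff near $0$ is needed. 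With these repairs your argument is correct and essentially coincides with the paper's proof.
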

\begin{proof}
First, note that $\mathcal H^{n-2}(\sing C) = 0$, so, for all $\varepsilon >0$, we can cover $\sing C$ by $N_j$ balls $B_i := B_{\rho_i}(x_i)$, of radius $\rho_i$ such that
$$\rho_i \ls \frac{1}{2j} \quad \text{and} \quad \sum_i \rho_i^{n-2} \ls \varepsilon.$$
We take $\eps = \frac 1 j$ in what follows. \\
For every $i$, we introduce a smooth function $\psi_i$ such that $\psi_i(x) = 1$ on $B_i$ and $\psi_i= 0$ on $\Omega \setminus B_{2\rho_i}(x_i).$
Then,
$$\int_{\Omega} |\nabla \psi_i|^2 \ls \left(\frac{\rho_i}{2}\right)^{-2} (4\rho_i^n - \rho_i^n) \ls \rho_i^{n-2}.$$
We introduce $\psi^j := 1 - \max(\psi_i).$ Then, as soon as $\dist(x,\sing C) > \frac{1}{j}$, $\dist(x,B_i) > \frac{1}{2j} > \rho_j$ so $\psi_i(x) = 0$ and then $\psi^j(x) = 1.$ \\
Let us define the sets $A_0 = \emptyset$ and
$$\forall 1 \ls i \ls N_j, \quad A_i := \{ \psi^j = 1- \psi_i \} \setminus \bigcup_{k < i} A_k.$$
One can compute
$$\int_{\Omega} |\nabla \psi_j|^2 = \sum_i \int_{A_i} |\nabla \psi_i|^2 \ls  \sum_i \rho_i^{n-2} \ls \frac 1j.$$
Finally, we set
$$\varphi_j = \chi_j \circ \psi^j$$ where $\chi_j$ is a cut off function such that $ \chi_j = 1$ on $B_j(0)$ and $0$ on $\Omega \setminus B_{j+1}$. This way, $|\nabla \chi_j| \ls 1$. As a result, 
$$\forall x \in \Omega, \quad |\nabla \varphi_j(x)| \ls |\nabla \psi^j (x)|$$ and $\varphi_j$ fulfills the requirement of the lemma.
\end{proof}
 
Now, let $Q >0$ and $u_Q = \min(u,Q).$ Since $\Delta_C u \ls 0$ on $\reg C$, one has, for every $\zeta \gs 0$ Lipschitz compactly supported on $\reg C$,
$$\int_{\reg C} \nabla u_Q \cdot \nabla \zeta \gs 0.$$ Let $\psi \in \mathcal C^\infty_c(\R^n).$ With $\zeta = \varphi_j^2 \psi^2 u_Q^{-1}$, we have
$$\int_{\reg C} \nabla u_Q \cdot \left(2 u_Q^{-1} \psi \nabla \varphi_j +2 u_Q^{-1} \varphi_j^2  \nabla \psi^2 - \frac{1}{u_Q} \varphi_j^2 \psi^2 \nabla u_Q \right) \gs 0.$$
Using the regularity of $u$, \eqref{l2phii} and $\varphi_j \to 1$ uniformly on compact sets of $\reg C$, we get that for every $R>0$,
$$\int_{B_R(0) \cap \reg C} |\nabla u_Q|^2 < + \infty.$$
On the other hand, with $\zeta = \psi \varphi_j$ and assuming $\psi \gs 0$, and letting $j \to \infty$ we obtain
$$\int_{\reg C} \nabla u_Q \cdot \nabla \psi \gs 0.$$

Thanks to the two last inequalities, one can now apply \cite[Th. 6]{bombieri72} with $p = 1$, which tells that
$$\int_{\reg C \cap B_2(0)} u_Q \ls c \inf_{\reg C \cap B_2(0)} u_Q.$$
With $Q \to \infty$, we obtain
$$\int_{\reg C \cap B_2(0)} u \ls c \inf_{\reg C \cap B_2(0)} u >0.$$

Coming back to the functions $u_j$, on every (non empty) compact $L \subset \reg C \cap B_2(0)$, we have
$$\inf_L u \gs \inf_{\reg C \cap B_2(0)} u := \delta >0.$$
As $\inf_L u_j \to \inf_L u$, one has, for $j$ larger than some $j_1$,
$$\inf_L u_j \gs \frac{\delta}{2}.$$
On the other hand, $u_j(y_0) \to u(y_0) = 1.$ So, there exists $j_2$ such that $\forall j \gs j_2$, $u_j(y_0) \gs \frac 12.$
Thus, there exists $j_3 = \max(j_1,j_2)$ such that for all $j \gs j_3$,
$$\inf_L u_j \gs \frac{\delta}{4} u_j(y_0).$$
Remembering \eqref{sim8}, one deduce that for every $K \subset \reg C \cap B_2(0)$ compact (non empty), one has, for $j$ sufficiently large (depending on $K$ and $L$),
\begin{equation}\inf_L u_j \gs c_K \sup_K u_j.\label{contrad1}\end{equation}
Taking $K = p_j(\rho_j^{-1} \Omega_{\theta_0} \cap \partial B_1)$ and $L = p_j(\rho_j^{-1} \Omega_{\theta_0} \cap \partial B_\theta),$ we see that for small $\theta$, \eqref{contrad1} and \eqref{contrad2} cannot happen together. This is a contradiction.

\subsection*{Acknowledgment}
I am very grateful to Antonin Chambolle for introducing me to these problems and for many fruitful discussions. I thank Giovanni Bellettini for his interest in my work and for pointing out Allard's result on the absence of monotonicity formula in the anisotropic framework.

\end{document}